\numberwithin{equation}{section}
\numberwithin{figure}{section}
\numberwithin{table}{section}
\theoremstyle{plain}
\newtheorem{thm}{\protect\theoremname}[section]
  \theoremstyle{remark}
  \newtheorem{rem}[thm]{\protect\remarkname}
  \theoremstyle{plain}
  \newtheorem{cor}[thm]{\protect\corollaryname}
  \theoremstyle{plain}
  \newtheorem{lem}[thm]{\protect\lemmaname}
  \theoremstyle{definition}
  \newtheorem{defn}[thm]{\protect\definitionname}
  \theoremstyle{remark}
  \newtheorem{claim}[thm]{\protect\claimname}
  \theoremstyle{plain}
  \newtheorem{prop}[thm]{\protect\propositionname}
\newcommand{\ind}{{\mathbbm{1}}}
  \providecommand{\claimname}{Claim}
  \providecommand{\corollaryname}{Corollary}
  \providecommand{\definitionname}{Definition}
  \providecommand{\lemmaname}{Lemma}
  \providecommand{\propositionname}{Proposition}
  \providecommand{\remarkname}{Remark}
\providecommand{\theoremname}{Theorem}
\begin{document}

\title{Eigenvalue confinement and spectral gap for random simplicial complexes}

\author{Antti Knowles\thanks{Partially supported by Swiss National Science Foundation grant 144662.} \and Ron Rosenthal\thanks{Partially supported by an ETH fellowship.}}

\maketitle

\begin{abstract}
	We consider the adjacency operator of the Linial-Meshulam model for random simplicial complexes on $n$ vertices, where each $d$-cell is added independently with probability $p$ to the complete $(d-1)$-skeleton. Under the assumption $np(1-p) \gg \log^4 n$, we prove that the spectral gap between the $\binom{n-1}{d}$ smallest eigenvalues and the remaining $\binom{n-1}{d-1}$ eigenvalues is $np - 2\sqrt{dnp(1-p)} \, (1 + o(1))$ with high probability. This estimate follows from a more general result on eigenvalue confinement. In addition, we prove that the global distribution of the eigenvalues is asymptotically given by the semicircle law. The main ingredient of the proof is a F\"uredi-Koml\'os-type argument  for random simplicial complexes, which may be regarded as sparse random matrix models with dependent entries. 
\end{abstract}


\section{Introduction \label{sec:Introduction}}
The Erd\H{o}s-R\'enyi graph \cite{ER59,ER61} $G(n,p)$ is a random graph on $n$ vertices, where each edge is added independently with probability $p$. The spectrum of its adjacency matrix has been extensively studied \cite{FK81,FKS89,FO05,CO07,HKP12,EKYY12,EKYY13}. 
Generally, the spectrum of the adjacency matrix of a graph encodes many important properties of the graph, in particular relating to its connectivity and expansion properties \cite{AM85,Alo86,Ni91,Ch97,HLW06,KS06,CRS10}. From the point of view or random matrix theory, the adjacency matrix of $G(n,p)$ is a symmetric sparse random matrix with independent upper-triangular entries.

In this paper we study the spectra of random simplicial complexes. We consider the \emph{Linial-Meshulam model} \cite{LM06}, which is high-dimensional generalization of the Erd\H{o}s-R\'enyi model. Given $n,d\in\mathbb{N}$ and $p\in[0,1]$, the Linial-Meshulam model $X \equiv X(d,n,p)$ is a random $d$-dimensional complex on $n$ vertices with a complete $(d-1)$-skeleton in which each $d$-cell is added independently with probability $p$. For $d = 1$, the Linial-Meshulam model reduces to the Erd\H{o}s-R\'enyi model $G(n,p)$. Following its introduction in \cite{LM06}, the Linial-Meshulam has been extensively studied in \cite{MW09,Ko10,BHK11,Wa11,CCFK12,HKP12,HJ13Th,HKP13,ALLM13,GW14,LP14}. The notion of adjacency matrix has a natural extension to simplicial complexes, whereby the adjacency operator of a complex $X$ is a self-adjoint operator that encodes the information whether two $(d-1)$-cells belong to a common $d$-cell or not.

As for graphs, the spectrum of the adjacency operator, in particular its spectral gap, determines a notion of spectral expansion. There has recently been considerable interest in \emph{high-dimensional expanders}, namely analogs of expander graphs in the context of
general simplicial complexes. Unlike the graph case $d=1$, where various different notions of expansion are closely related, in the high-dimensional case $d>1$ the several notions of expansion that have been proposed in the literature are in general far from being equivalent, and the relationship between them is still poorly understood. Notions of expansion for $d>1$ that have been proposed include the aforementioned spectral expansion \cite{Gar73,GW14,GP14,Op14}, combinatorial expansion \cite{PRT12,Par13,Go13,GS14,CMRT14},
geometric and topological expansion \cite{Gr10,FGLNP10,MW11,DKW15,Ev15}, $\mathbb{F}_{2}$-coboundary expansion \cite{LM06,MW09,Gr10,DK12,SKM12,GW14,LM13} and Ramanujan complexes \cite{CSZ03,Li04,LSV05,GP14,EGL14,KKL14}. In addition, the adjacency matrix can be  interpreted as a generator of a stochastic process which is a high-dimensional analog of simple random walks, see \cite{PR12,MS13,Ro14} for more details. A recent survey can be found in \cite{Lu13}.

From the point of view of random matrix theory, the adjacency matrix of the Linial-Meshulam model is a sparse self-adjoint random matrix. The entries of this matrix are independent (up to the self-adjointness constraint) if and only if $d = 1$. Indeed, the independent random variables are associated with \emph{$d$-cells}, while entries of the adjacency matrix are associated with \emph{pairs of $(d - 1)$-cells}; these two notions coincide if and only if $d = 1$. The algebraic relationship between the independent random variables and the matrix entries is governed by the simplicial structure.

We now give an informal summary of our results. Throughout the following we use the abbreviation 
\begin{equation}
	q := p(1-p).\label{eq:defn_q}
\end{equation}
Under the assumption $nq \gg \log^4 n$ and fixed $d$, we prove that the $N := \binom{n}{d}=\binom{n-1}{d}+\binom{n-1}{d-1}$ eigenvalues of the adjacency operator are with high probability confined to two separate intervals: denoting by $\lambda_1 \leq \lambda_2 \leq \cdots \leq \lambda_N$ the eigenvalues of the adjacency operator, we have with high probability
\begin{equation} \label{result_sketch}
	\lambda_i \;\in\;
	\begin{cases}
		\sqrt{dnq} \, \bigl[-2 - o(1), 2  + o(1)\bigr] & \text{if } i \leq \binom{n-1}{d}\\
		np + [-7d,7d] & \text{if } i > \binom{n-1}{d}\,
	\end{cases}.
\end{equation}
The first estimate is optimal, while the second one is not (in fact the constant $7$ may be easily improved). See Figure \ref{fig:eigenvalues_illustration} for an illustration. As an immediate corollary, we obtain the spectral gap
\begin{equation} \label{sketch_gap}
	\lambda_{\binom{n-1}{d} + 1} - \lambda_{\binom{n-1}{d}} \;=\; np - 2\sqrt{dnq} \, (1 + o(1))\,.
\end{equation}
We refer to Theorem \ref{thm:location_of_eigenvalues} and Corollary \ref{cor:con_gap} below for the precise statements.

\begin{figure}[h]
\centering{}
\includegraphics[scale=0.6]{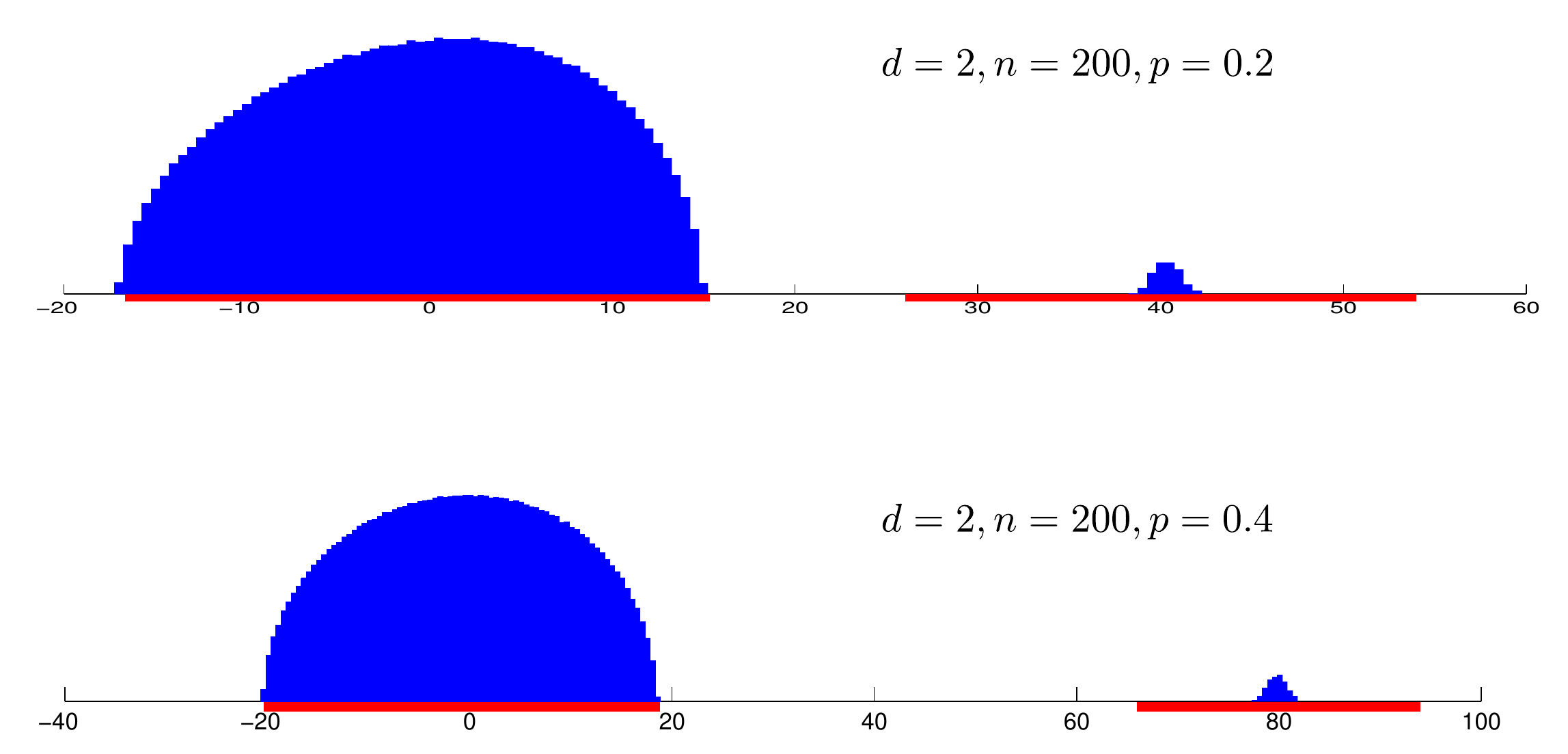}
\caption{A histogram of the eigenvalue distribution of $A$ for the values $(d,n,p)=(2,200,0.2)$ (top) and $(d,n,p)=(2,200,0.4)$ (bottom). The two intervals on the right-hand side of \eqref{result_sketch} are indicated using red regions.}
\label{fig:eigenvalues_illustration}
\end{figure}

Previously, a related confinement result for the eigenvalues of the Linial-Meshulam model was established in \cite[Theorem 2]{GW14}, where the authors assume that $nq \gg \log n$ and establish \eqref{result_sketch} with the larger intervals $\sqrt{nq} [-C,C]$ for $i \leq \binom{n-1}{d}$ and $np + \sqrt{nq} \, [-C,C]$ for $i > \binom{n-1}{d}$, where $C$ is some positive constant. This implies a spectral gap \eqref{sketch_gap} equal to $np + O(\sqrt{nq})$. Hence, at the cost of the stronger assumption $nq \gg \log^4 n$ instead of $nq \gg \log n$, we improve the confinement of \cite{GW14} by obtaining the optimal constant for the size of the interval in the first case of \eqref{result_sketch}, and by improving the size of the interval in the second case of \eqref{result_sketch} by a factor $(nq)^{-1/2}$. Together, these improvements allow us to identify the subleading term in the spectral gap \eqref{sketch_gap}.

Finally, we prove that when $nq \to \infty$ the empirical spectral measure is asymptotically given by Wigner's semicircle law \cite{Wi58} on the interval $\sqrt{dnq}\,[-2,2]$. In particular, this implies the optimality of the first bound of \eqref{result_sketch}. This result extends the well-known semicircle law for $G(n,p)$ to high-dimensional simplicial complexes.

We conclude this section with a few words about the proof. The main ingredient of the proof is a F\"uredi-Koml\'os-type argument for random simplicial complexes. The F\"uredi-Koml\'os argument involves estimating the expectation of the trace of a very high power of the centered adjacency matrix, by encoding the many resulting terms using walks on graphs. A fundamental observation in the original F\"uredi-Koml\'os argument is that, owing to the vanishing expectation of the entries of the centered adjacency matrix, each edge of the graph must be crossed at least twice, leading to a reduction in the number of admissible walks. For $d > 1$, this is no longer true because of the dependencies among the matrix entries. Obtaining sharp enough upper bounds on the number of admissible walks represents the main work in our proof.  As it turns out, for $d > 1$ the mechanism behind the reduction in the number of admissible walks is more subtle, and, unlike for $d = 1$, nonlocal. We refer to Section \ref{sec:Bounding-the-norm-of-H} for a more detailed discussion on how to estimate the number of admissible paths. Finally, in Section \ref{sec:location_of_eigenvalues}, in order to prove the second case of \eqref{result_sketch}, we need to modify the argument described above to obtain smaller bounds, by a factor $(nq)^{-1/2}$, for the restriction of the adjacency matrix to an explicit $\binom{n-1}{d-1}$-dimensional subspace (denoted by $\mathrm{im}\,P$ in Section \ref{sec:location_of_eigenvalues}). This estimate is obtained by a twist of the argument developed in Section \ref{sec:Bounding-the-norm-of-H}. Our main results then follow easily from these estimates, combined with eigenvalue interlacing bounds and second order perturbation theory (see Sections \ref{sec:pf_thm26} and \ref{sec:location_of_eigenvalues}).


\section{Results}

Let $X$ be a finite simplicial complex with vertex set $V$ of size $n$. This means that $X$ is a finite collection of subsets of $V$, called \emph{cell}s, which is closed under taking subsets, i.e., if $\tau\in X$ and $\sigma\subseteq\tau$, then $\sigma\in X$. The \emph{dimension }of a cell $\sigma$ is $|\sigma|-1$, and $X^{j}$ denotes the set of cells of dimension $j$, which we call \emph{$j$-cells}. The dimension of $X$, which we denote by $d$, is the maximal dimension of a cell in it. We assume that the complex has a \emph{complete $(d-1)$-skeleton}, by which we mean that $X$ contains all subsets of $V$ of size at
most $d$. 

For $j\geq 1$, every $j$-cell $\sigma=\{ \sigma^{0},\ldots,\sigma^{j}\} \in X^j$ has two possible orientations, corresponding to the possible orderings of its vertices, up to an even permutation. We denote an oriented cell by square brackets, and a flip of orientation by an overline. For example, one orientation of $\sigma=\{x,y,z\} $ is $[x,y,z]=[y,z,x]=[z,x,y]$. The other orientation of $\sigma$ is $\overline{[x,y,z]}=[y,x,z]=[x,z,y]=[z,y,x]$. We denote by $X_{\pm}^{j}$ the set of oriented $j$-cells (so that $|X_{\pm}^{j}|=2|X^{j}|$ for $j\geq1$). Moreover, we set $X_{\pm}^{0}=X^{0}=V$.

For $j \geq 1$, the space of \emph{$j$-forms on $X$}, denoted by $\Omega^{j}(X)$, is the vector space of skew-symmetric functions on oriented $j$-cells:
\[
	\Omega^{j}\equiv\Omega^{j}(X):= \left\{ f:X_{\pm}^{j}\rightarrow\mathbb{R}\,\middle|\,f(\overline{\sigma})=-f(\sigma)\;\forall\sigma\in X_{\pm}^{j}\right\} .
\]
In particular, 
$\Omega^{1}$ is the space of flows on edges. We endow each $\Omega^{j}$
with the inner product
\[
	\langle f,g \rangle =\sum_{\sigma\in X^{j}}f(\sigma)g(\sigma).
\]
Note that $f(\sigma)g(\sigma)$ is well-defined since its value is independent of the choice of orientation of the $j$-cell $\sigma$. 

Next, define the boundary $\partial \sigma$ of the $(j+1)$-cell $\sigma=\{ \sigma^{0},\ldots,\sigma^{j+1}\} \in X^{j+1}$ as the set of $j$-cells $\{ \sigma^{0},\ldots,\sigma^{i-1},\sigma^{i+1},\ldots,\sigma^{j}\}$ for $0\leq i\leq j+1$. An oriented $(j+1)$-cell $[\sigma^{0},\ldots,\sigma^{j+1}]\in X_{\pm}^{j+1}$ induces orientations on the $j$-cells in its boundary, as follows: the cell $\{ \sigma^{0},\ldots,\sigma^{i-1},\sigma^{i+1},\ldots,\sigma^{j+1}\}$ is oriented as $(-1)^{i}[\sigma^{0},\ldots,\sigma^{i-1},\sigma^{i+1},\ldots,\sigma^{j+1}]$, where we use the notation  $(-1)\tau:=\overline{\tau}$.

The following neighboring relation for oriented cells was introduced in \cite{PR12}: for $\sigma,\sigma'\in X_{\pm}^{d-1}$ we denote $\sigma'\sim\sigma$ (or $\sigma\overset{X}{\sim}\sigma'$) if there exists an oriented $d$-cell $\tau\in X_{\pm}^{d}$ such that both $\sigma$ and $\overline{\sigma'}$ are in the boundary of $\tau$ as oriented cells (see Figure \ref{fig:An-oriented-edge} for an illustration in the case $d=2$).

\begin{figure}[h]
\centering{}\includegraphics{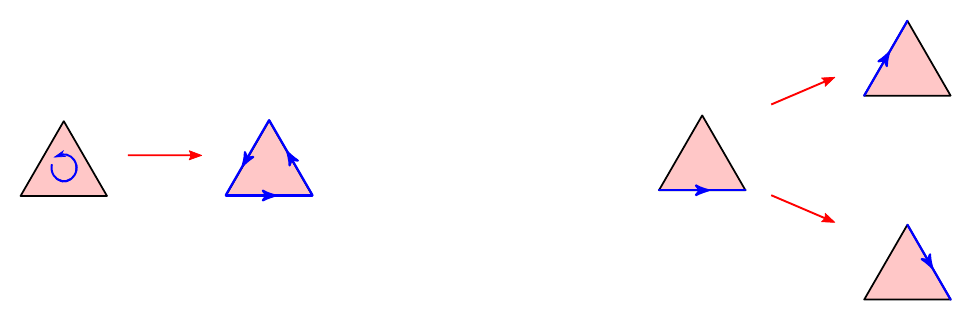}\caption{Left: an oriented 2-cell and the orientation it induces on its boundary. Right: an oriented 1-cell in a 2-cell together with its two oriented neighboring 1-cells.
\label{fig:An-oriented-edge}}
\end{figure}

The adjacency operator $A=A_{X}$ of a complex $X$ on the space $\Omega^{d-1}(X)$
is defined by 
\begin{equation}
	Af(\sigma):=\sum_{\sigma'\sim\sigma}f(\sigma'),\qquad\forall f\in\Omega^{d-1}(X),\,\sigma\in X_{\pm}^{d-1}.\label{eq:Adj_operator}
\end{equation}
This definition is a rather direct way to introduce the adjacency operator. An equivalent, and more conceptual, definition is $A=D-\Delta^{+}$, where $D$ is the degree operator
of $(d-1)$-cells (the \emph{degree} of a $j$-cell is the number of $(j+1)$-cells which contain it) and $\Delta^{+}$ is the upper Laplacian originating in the work of Eckmann \cite{Eck44}. More on the connection between $A,\Delta^{+}$ and the homology and cohomology of the complex can be found in \cite{Ha02,GW14}. See also \cite{PRT12,MS13,Ro14} for the connection to random walks on simplicial complexes.

We denote by $K \equiv K(d,n)$ the complete $d$-complex on the $n$ vertices in $V$ and by $\mathbb{A}:=A_{K}$ its adjacency operator. 

Coming back to the Linial-Meshulam model, for $n,d\in\mathbb{N}$ satisfying $n \geq d+1$, and $p=p(n)\in[0,1]$, the Linial-Meshulam complex $X=X(d,n,p)$ is a random $d$-dimensional complex on $n$ vertices with a complete $(d-1)$-skeleton in which each $d$-cell of $K$ is added to $X$ independently with probability $p$. This in particular implies that $X^{j}=K^{j}$ for every $0\leq j\leq d-1$ and thus also $\Omega^{j}(X)=\Omega^{j}(K)$ for $0\leq j\leq d-1$. We denote by $\mathbb{P}  \equiv \mathbb{P}_n $ and $\mathbb{E}\equiv \mathbb{E}_n $ the probability measure and expectation with respect to the law of $X$. Note that for $d=1$ this is exactly the Erd\H{o}s-Rényi model $G(n,p)$.

Throughout this paper we fix $d\geq2$. All of our results also hold for the case $d=1$ which was already extensively studied in the literature; see \cite{Wi58,FK81,FO05,CO07,HKP12} and the references therein. The adjacency operator of $X$, $A=A_{X}$, is a self adjoint operator on the $N \equiv |X^{d-1}|=\binom{n}{d}$-dimensional space $\Omega^{d-1}(X)$.

We now state our main results.
\begin{thm}[Eigenvalue confinement] \label{thm:main_result} \label{thm:location_of_eigenvalues} For every $d\geq2$ there exists a positive constant $C>0$  depending only on $d$ such that the following holds with probability at least $1-n^{-D}$ for all $D > 0$. 
\begin{enumerate}
	\item For every $\xi>0$, if $nq\geq\frac{C(1+D)^{4}}{\min\{ \xi^6,1\}}\log^{4}n$  then the $\binom{n-1}{d}$ smallest eigenvalues of $A$ lie in the interval $\sqrt{dnq}  \, [-2-\xi,2+\xi]$.  
	\item If $nq\geq C(1+D)^4\log^{4}n$ and $q\log^{6}n\leq\frac{1}{C(1+D)^6}$, then the $\binom{n-1}{d-1}$ largest eigenvalues of $A$ lie in the interval $np+[-7d,7d]$. 
\end{enumerate}
\end{thm}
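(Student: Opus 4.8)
The plan is to reduce both parts to two operator-norm bounds on the centred operator $H:=A-\mathbb{E}[A]$, and then to read off the confinement from the block structure of $\mathbb{E}[A]$ together with eigenvalue interlacing and one step of analytic perturbation theory. First I record the structure. Since the $(d-1)$- and $(d-2)$-skeletons of $X$ are complete and nonrandom, the subspace $\operatorname{im}\delta_{d-2}\subseteq\Omega^{d-1}$ and the orthogonal projection $P$ onto it are deterministic, with $\operatorname{rank}P=\binom{n-1}{d-1}$ and $\operatorname{rank}P^{\perp}=\binom{n-1}{d}$, where $P^{\perp}:=I-P$. Writing $A=\sum_{\tau\in X^{d}}A_{\tau}$ over the present $d$-cells and using $\mathbb{P}(\tau\in X)=p$ gives $\mathbb{E}[A]=p\,\mathbb{A}$; as $\mathbb{A}$ acts classically as $(n-d)I$ on $\operatorname{im}\delta_{d-2}$ and as $-dI$ on its orthogonal complement (the spectrum of the adjacency operator of the complete complex, equivalently $\mathbb{A}=(n-d)I-\Delta_{K}^{+}$ with $\Delta_{K}^{+}$ equal to $0$ and to $nI$ on these two subspaces), we obtain
\[
	\mathbb{E}[A]\;=\;p(n-d)\,P\;-\;dp\,P^{\perp},
\]
so the only part of $A$ off-diagonal with respect to the splitting $\operatorname{im}P\oplus\operatorname{im}P^{\perp}$ comes from $H$. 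The two facts I will invoke, each holding on an event of probability at least $1-n^{-D}$ under the relevant hypothesis, are: (i) $\|H\|\le(2+\xi)\sqrt{dnq}$, the F\"uredi-Koml\'os bound of Section~\ref{sec:Bounding-the-norm-of-H}; and (ii) $\|PHP\|\le d$, the twisted F\"uredi-Koml\'os bound of Section~\ref{sec:location_of_eigenvalues}, which is where the hypothesis $q\log^{6}n\le\frac{1}{C(1+D)^{6}}$ is needed. Establishing (ii) --- that compressing $H$ to $\operatorname{im}P$ recovers the full factor $(nq)^{-1/2}$ over the trivial bound $\|H\|$ --- is the main obstacle; for the present theorem it is used as a black box, and the powers of $(1+D)$ in the hypotheses are precisely those required by (i) and (ii).

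Part~(1) is immediate from (i) and the Courant--Fischer principle. On one hand $\lambda_{1}(A)\ge\lambda_{\min}(\mathbb{E}[A])-\|H\|=-dp-\|H\|$; on the other, testing the variational formula for the $\binom{n-1}{d}$-th smallest eigenvalue on the $\binom{n-1}{d}$-dimensional subspace $\operatorname{im}P^{\perp}$ gives
\[
	\lambda_{\binom{n-1}{d}}(A)\;\le\;\lambda_{\max}\!\bigl(P^{\perp}AP^{\perp}\big|_{\operatorname{im}P^{\perp}}\bigr)\;\le\;\bigl\|-dp\,P^{\perp}+P^{\perp}HP^{\perp}\bigr\|\;\le\;dp+\|H\|.
\]
Since $\sqrt{dnq}\to\infty$ under the hypothesis of part~(1), the term $dp\le d$ is absorbed into $\xi$ (this is the role of $\min\{\xi^{6},1\}$ in the threshold), and hence $\lambda_{1},\dots,\lambda_{\binom{n-1}{d}}\in\sqrt{dnq}\,[-2-\xi,2+\xi]$.

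For part~(2), split $A=B+R$ with $B:=PAP+P^{\perp}AP^{\perp}$ block-diagonal and $R:=PHP^{\perp}+P^{\perp}HP$, so $\|R\|\le2\|H\|$. The largest $\binom{n-1}{d-1}$ eigenvalues of $B$ are those of $PAP|_{\operatorname{im}P}=p(n-d)P+PHP$ and thus lie in $p(n-d)+[-\|PHP\|,\|PHP\|]$, while the other $\binom{n-1}{d}$ eigenvalues of $B$ lie in $-dp+[-\|H\|,\|H\|]$. Since $nq\gg\log^{4}n$ forces $np\to\infty$, we have $\|H\|=O(\sqrt{dnq})=o(np)$, so these two clusters are separated by a gap of size $np\,(1+o(1))\gg\|R\|$; by Weyl's inequality the largest $\binom{n-1}{d-1}$ eigenvalues of $A$ are exactly those within $o(np)$ of $p(n-d)$. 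To locate such a $\lambda$ to within $O(d)$ I use the Schur complement (the exact form of second-order perturbation theory): for $\lambda$ outside the spectrum of $P^{\perp}AP^{\perp}|_{\operatorname{im}P^{\perp}}$, $\lambda\in\operatorname{spec}(A)$ iff $\lambda\in\operatorname{spec}(M(\lambda))$ on $\operatorname{im}P$, where
\[
	M(\lambda)\;:=\;PAP+PAP^{\perp}\,(\lambda-P^{\perp}AP^{\perp})^{-1}\,P^{\perp}AP.
\]
For $\lambda=p(n-d)+o(np)$ the distance from $\lambda$ to that spectrum is $np\,(1-o(1))$, and since $PAP^{\perp}=PHP^{\perp}$, $P^{\perp}AP=P^{\perp}HP$ (the mean contributes no off-diagonal block), the correction term has norm at most $\|H\|^{2}/(np\,(1-o(1)))\le5d$, where in the last step we enlarge $C$ so that (i) applies with a small enough absolute constant in place of $\xi$ (e.g. $\|H\|\le\tfrac{11}{5}\sqrt{dnq}$). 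As $M(\lambda)$ is self-adjoint and $PAP=p(n-d)P+PHP$, every eigenvalue of $M(\lambda)$ lies in $p(n-d)+[-\|PHP\|-5d,\,\|PHP\|+5d]$; with (ii) and $dp\le d$ this yields $\lambda\in np+[-7d,7d]$ for each of the $\binom{n-1}{d-1}$ largest eigenvalues of $A$. The only genuinely nonroutine ingredient is (ii); the rest is interlacing and a one-step resolvent expansion.
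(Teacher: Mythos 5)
Your proposal is correct and follows essentially the same route as the paper: the same decomposition $\mathbb{E}[A]=p(n-d)P-dp\,P^{\perp}$, the same two black-box inputs (the F\"uredi-Koml\'os norm bound on $H$ and the twisted bound on $PHP$), Weyl/interlacing for part (1), and for part (2) the block split $A=B+R$ followed by a second-order perturbation estimate controlling the correction by $\|H\|^{2}/\mathrm{gap}$. The only cosmetic difference is that where you derive this correction estimate directly from the Schur complement identity, the paper invokes it as a citation (\cite[Proposition A.1]{KY14}); the underlying mechanism is the same resolvent expansion, and the paper's version is just packaged as a ready-made lemma.
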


\begin{rem} \label{rem:generalization}
	In Theorem \ref{thm:location_of_eigenvalues_strong_ver} below, we give a slightly more precise version of Theorem \ref{thm:main_result}. For instance, we can replace (2) with the following statement.
	\begin{itemize}
		\item[($\mathit{2}$')] If $nq\geq C(1+D)^4 \log^{4}n$ then the $\binom{n-1}{d-1}$ largest eigenvalues of $A$ lie in the interval
\begin{equation}
	np+\left(6d+O_D(\sqrt{q}\log^3n)\right) [-1,1].
\end{equation}
\end{itemize}
\end{rem}

From Theorem \ref{thm:main_result} and Remark \ref{rem:generalization}, we immediately get the following result.

\begin{cor}[Spectral gap]
	For every $d\geq2$ there exists a positive constant $C > 0$ depending only on $d$ such that for all $\xi > 0$ and $D > 0$ satisfying $nq\geq\frac{C(1+D)^4}{\min\{\xi^6,1\}}\log^{4}n$ we have
\begin{equation*}
	\lambda_{\binom{n-1}{d} + 1} - \lambda_{\binom{n-1}{d}} = np - 2 \sqrt{dnq} \, \big(1+ O_D(\xi)\big)
\end{equation*}
with probability at least $1-n^{-D}$.
\end{cor}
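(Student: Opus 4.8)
The plan is to control $\lambda_{\binom{n-1}{d}}$ and $\lambda_{\binom{n-1}{d}+1}$ separately via the two halves of the confinement theorem, subtract, and then check that every error term produced is at most $O_D(\xi)$ times the main term $2\sqrt{dnq}$. Write $k:=\binom{n-1}{d}$ and $\kappa:=\sqrt{dnq}$. First I invoke part (1) of Theorem \ref{thm:main_result}: its hypothesis is exactly that of the corollary, so on an event of probability at least $1-n^{-D}$ the $k$ smallest eigenvalues lie in $\kappa\,[-2-\xi,\,2+\xi]$, and in particular $\lambda_k\le(2+\xi)\kappa$. Since $\min\{\xi^6,1\}\le1$, the same hypothesis gives $nq\ge C(1+D)^4\log^4 n$, which is what statement (2$'$) of Remark \ref{rem:generalization} requires; I use (2$'$) rather than part (2) of Theorem \ref{thm:main_result}, since the extra condition $q\log^6 n\le(C(1+D)^6)^{-1}$ of the latter is not available here. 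Thus on a second event of probability at least $1-n^{-D}$,
\[
	np-6d-O_D(\sqrt q\,\log^3 n)\;\le\;\lambda_{k+1}\;\le\;np+6d+O_D(\sqrt q\,\log^3 n).
\]

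For the lower bound on the gap I combine $\lambda_k\le(2+\xi)\kappa$ with the left-hand inequality to get $\lambda_{k+1}-\lambda_k\ge np-2\kappa-\xi\kappa-6d-O_D(\sqrt q\,\log^3 n)$, and it then remains to see that $6d$ and $O_D(\sqrt q\,\log^3 n)$ are each $O_D(\xi\kappa)$. This is a short elementary computation from the hypothesis $nq\ge C(1+D)^4\log^4 n\,\max\{\xi^{-6},1\}$: a case split according to whether $\xi\le1$ or $\xi\ge1$ gives $\xi\sqrt{nq}\gtrsim_d 1$ and $\xi\sqrt n\gtrsim_d\log^3 n$, whence, recalling $\kappa=\sqrt d\,\sqrt{nq}$, the two bounds follow. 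This already yields the ``$\ge$'' half, namely $\lambda_{k+1}-\lambda_k\ge np-2\sqrt{dnq}\,(1+O_D(\xi))$ --- the statement that the gap is large.

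The ``$\le$'' half is the one place where the upper-confinement estimate of Theorem \ref{thm:main_result}(1) is by itself insufficient: it only gives $\lambda_k\ge-(2+\xi)\kappa$, which would produce the useless bound $\lambda_{k+1}-\lambda_k\lesssim np+2\kappa$. What is needed is the matching lower bound $\lambda_k\ge(2-O_D(\xi))\kappa$ --- the fact that the spectrum of $A$ genuinely reaches the \emph{upper} edge $2\sqrt{dnq}$ of the first confinement interval. I would import this from the sharp form of the confinement result (Theorem \ref{thm:location_of_eigenvalues_strong_ver}), equivalently from the \emph{lower} bound in the F\"uredi--Koml\'os moment estimate of Section \ref{sec:Bounding-the-norm-of-H} via a routine moment argument; it also follows from the semicircle law, under which a positive fraction of the $k$ smallest eigenvalues accumulate near $2\kappa$. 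Granting $\lambda_k\ge(2-O_D(\xi))\kappa$, combining with $\lambda_{k+1}\le np+6d+O_D(\sqrt q\,\log^3 n)$ and the same absorption of the error terms into $O_D(\xi\kappa)$ gives $\lambda_{k+1}-\lambda_k\le np-2\sqrt{dnq}\,(1+O_D(\xi))$.

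Finally I intersect the (at most three) events used above; each has probability at least $1-n^{-D}$, so after the customary adjustment of the constant $C$ --- running the argument with $D+1$ in place of $D$, which is affordable because $(D+2)^4\le16(D+1)^4$ --- their intersection has probability at least $1-3n^{-(D+1)}\ge1-n^{-D}$ for $n\ge3$, while for the finitely many smaller $n$ the hypothesis is vacuous once $C$ is taken large enough. The main obstacle is thus not the bookkeeping (the two applications of the confinement theorem, the arithmetic absorbing $6d$ and $O_D(\sqrt q\,\log^3 n)$ into $O_D(\xi\kappa)$, and the union bound), but the upper bound on the gap: since the statement is a two-sided equality it forces one to know that the spectrum actually attains the upper edge of the first interval, an input that lies beyond the pure upper-confinement bound of Theorem \ref{thm:main_result}(1) and is furnished instead by the sharp F\"uredi--Koml\'os estimate or the semicircle law.
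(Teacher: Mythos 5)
Your lower-bound analysis is exactly what the paper's ``immediately get'' has in mind: combine $\lambda_{\binom{n-1}{d}}\le(2+\xi)\sqrt{dnq}$ from Theorem~\ref{thm:main_result}(1) with $\lambda_{\binom{n-1}{d}+1}\ge np-6d-O_D(\sqrt q\,\log^3 n)$ from Remark~\ref{rem:generalization}, then absorb $6d$ and $O_D(\sqrt q\,\log^3 n)$ into $O_D(\xi\sqrt{dnq})$ using the hypothesis on $nq$; your case split $\xi\le1$ / $\xi\ge1$ for that absorption is a correct way to do the bookkeeping. It is also worth noting that the paper's stated convention is one-sided ($f=O_k(g)$ means $f\le C_k g$, not $|f|\le C_k g$), under which the corollary's equality asserts only a \emph{lower} bound on the gap, and your first two paragraphs already close the argument.

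Your worry about the upper bound on the gap is legitimate under the natural two-sided reading (which the abstract's ``$(1+o(1))$'' suggests), but your proposed sources for the missing lower bound $\lambda_{\binom{n-1}{d}}\ge(2-O_D(\xi))\sqrt{dnq}$ do not supply it. Theorem~\ref{thm:location_of_eigenvalues_strong_ver}(1) is obtained by combining $\Vert H\Vert\le2\sqrt d+\xi$ with Weyl's inequality, which controls the eigenvalues of $Y=H+\kappa P$ only up to a shift of $\pm\Vert H\Vert$ from those of $\kappa P$; this yields the same unhelpful $\lambda_{\binom{n-1}{d}}\ge-(2\sqrt d+\xi)\sqrt{nq}-pd$ and nothing more. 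The semicircle law (Theorem~\ref{thm:Semi_circle_law}) gives only an almost-sure statement via moment convergence and Borel--Cantelli, which is exactly what Remark~\ref{rem:optimal_constant} records; it does not come with a $1-n^{-D}$ bound, so it cannot be intersected with the confinement events at the level of probability the corollary demands. And the F\"uredi--Koml\'os machinery in Section~\ref{sec:Bounding-the-norm-of-H} yields only \emph{upper} bounds on $\mathbb{E}[\mathrm{tr}\,H^k]$; turning an exact moment computation into a high-probability lower bound on $\lambda_{\binom{n-1}{d}}$ would require a separate concentration or second-moment (Paley--Zygmund-type) argument that appears nowhere in the paper. So the gap you spot is genuine if one insists on a two-sided reading, but the repair you sketch is not rigorous as stated.
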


For the following results, where $n \to \infty$ in probability space, we couple all random complexes $(X(d,n,p))_{n \in \mathbb{N}}$ in the standard fashion. Namely, we work on the probability space generated by the family of i.i.d.\ random variables $(\chi_\tau)_\tau$ indexed by all $d$-cells $\tau$ on the infinite vertex set $\{1,2,3,\dots\}$, where $\chi_\tau$ is a Bernoulli-$p$ random variable. Thus, for any $n\geq1$ and a $d$-cell $\tau \subset \{1, \dots, n\}$, we set $\tau$ to belong to $X(d,n,p)$ if and only if $\chi_{\tau}=1$.

\begin{cor} \label{cor:con_gap}
	Fix $d\geq2$ and an open interval $I$ containing $[-2,2]$. If $\lim_{n\to\infty}\frac{nq}{\log^{4}n}=\infty$, then, almost surely, the $\binom{n-1}{d}$ smallest eigenvalues of $A$ lie in $\sqrt{dnq}\,I$ for all but finitely many values of $n$.
\end{cor}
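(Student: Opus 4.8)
\emph{Proof proposal.} The plan is to deduce the statement from part~(1) of Theorem~\ref{thm:main_result} via a straightforward Borel--Cantelli argument, carried out on the common probability space on which all the complexes $(X(d,n,p))_{n\in\mathbb{N}}$ are coupled, as set up just before the statement.

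First, since $I$ is open and contains $[-2,2]$, I would fix $\xi>0$ with $[-2-\xi,2+\xi]\subseteq I$, and also fix some $D>1$, say $D=2$. Let $C=C(d)$ be the constant of Theorem~\ref{thm:main_result}. The hypothesis $nq/\log^4 n\to\infty$ guarantees that $nq\geq\frac{C(1+D)^4}{\min\{\xi^6,1\}}\log^4 n$ for all $n$ larger than some $n_0$. Hence, for every $n\geq n_0$, Theorem~\ref{thm:main_result}(1) applies and shows that, with probability at least $1-n^{-D}=1-n^{-2}$, the $\binom{n-1}{d}$ smallest eigenvalues of $A=A_{X(d,n,p)}$ lie in $\sqrt{dnq}\,[-2-\xi,2+\xi]$. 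Denoting by $\mathcal{B}_n$ the complementary event for $n\geq n_0$, this reads $\mathbb{P}(\mathcal{B}_n)\leq n^{-2}$.

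Since $\sum_{n\geq n_0}\mathbb{P}(\mathcal{B}_n)\leq\sum_{n} n^{-2}<\infty$, the first Borel--Cantelli lemma gives $\mathbb{P}(\limsup_n\mathcal{B}_n)=0$, so that almost surely only finitely many of the $\mathcal{B}_n$ occur. On this full-measure event, for all but finitely many $n$ the $\binom{n-1}{d}$ smallest eigenvalues of $A$ lie in $\sqrt{dnq}\,[-2-\xi,2+\xi]\subseteq\sqrt{dnq}\,I$, which is precisely the assertion.

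I do not expect a genuine obstacle here: the entire difficulty of the result is already contained in Theorem~\ref{thm:main_result}, whose proof occupies the body of the paper. The only points deserving (minimal) care are that the $\limsup$ must be interpreted on the coupled probability space --- which is exactly why that coupling was introduced --- and that one must pick $D>1$ so that $\sum_n n^{-D}$ is summable, which is harmless since $C$ depends only on $d$ and $\xi, D$ are fixed once and for all.
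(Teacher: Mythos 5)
Your proof is correct and is exactly the intended argument: fix $\xi>0$ with $[-2-\xi,2+\xi]\subseteq I$ and $D>1$, apply Theorem~\ref{thm:main_result}(1) for all sufficiently large $n$, and conclude by Borel--Cantelli on the coupled probability space. The paper leaves this deduction implicit, and your write-up supplies it in the standard way.
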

An analogous result holds for the largest $\binom{n-1}{d-1}$ eigenvalues, whose precise statement we omit. 

Our final result is about the eigenvalue distribution of $A$. For a self-adjoint operator $B$ operating on an $N$-dimensional space, denote by $(\lambda_{i}(B))_{i=1}^{N}$ its
eigenvalues in non-decreasing order and by 
\begin{equation}
	L_{B}=\frac{1}{N}\sum_{i=1}^{N}\delta_{\lambda_{i}(B)}\label{eq:empirical_measure}
\end{equation}
the empirical spectral measure of $B$.

\begin{thm}
	\label{thm:Semi_circle_law} If $d\geq2$ and $\lim_{n\to\infty}nq=\infty$,
	then, almost surely, 
	\begin{equation}
		L_{(dnq)^{-1/2}A}\overset{\cal D}{\longrightarrow}\nu\,, \label{eq:semi_circle}
	\end{equation}
	where $\nu(\mathrm{d}x) := \frac{\sqrt{(4-x^{2})_+}}{2\pi} \, \mathrm{d}x$ is the semicircle 	distribution.
\end{thm}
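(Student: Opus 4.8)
The plan is to prove \eqref{eq:semi_circle} by the method of moments, after reducing the statement from $A$ to the \emph{centered} adjacency operator $H := A - p\mathbb{A}$. Two algebraic observations make this reduction clean. First, $\mathbb{E}[A] = p\mathbb{A}$, since in a basis obtained by fixing one orientation per $(d-1)$-cell the $(\sigma,\sigma')$-entry of $A$ equals $\chi_{\sigma \cup \sigma'}$ times the corresponding entry of $\mathbb{A}$ when $|\sigma \cup \sigma'| = d+1$, and vanishes otherwise. Second, the spectrum of $\mathbb{A}$ is explicit: writing $\mathbb{A} = D_K - \Delta^+_K$ with $D_K = (n-d)I$, using the identity $\Delta^+_K + \Delta^-_K = nI$ on $\Omega^{d-1}(K)$ together with the fact that $\Delta^-_K$ equals $n$ on $\mathrm{im}(\partial_{d-1}^*)$ (of dimension $\binom{n-1}{d-1}$) and $0$ on $\ker(\partial_{d-1})$ (of dimension $\binom{n-1}{d}$), one finds that $\mathbb{A}$ has the eigenvalue $n-d$ with multiplicity $\binom{n-1}{d-1}$ and the eigenvalue $-d$ with multiplicity $\binom{n-1}{d}$. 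Hence $\mathbb{A}+dI$ has rank $\binom{n-1}{d-1}$, and
\[
	A + pd\,I \;=\; H + p\,(\mathbb{A} + d\,I),
\]
a perturbation of $H$ by an operator of rank $\binom{n-1}{d-1} = \tfrac{d}{n}N$. By the rank inequality for empirical spectral measures (the L\'evy distance between $L_B$ and $L_{B+R}$ is at most $\mathrm{rank}(R)/N$) and since the scalar shift $pd/\sqrt{dnq} \to 0$, I conclude that $L_{(dnq)^{-1/2}A} \to \nu$ almost surely if and only if $L_{(dnq)^{-1/2}H} \to \nu$ almost surely. In particular the $\binom{n-1}{d-1}$ large eigenvalues of $A$, located near $np \gg \sqrt{dnq}$, carry vanishing mass in the limit; note that this reduction is purely algebraic and does not invoke the confinement estimate of Theorem \ref{thm:main_result}, so it remains available in the full range $nq \to \infty$.

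It thus remains to show $L_{(dnq)^{-1/2}H} \to \nu$ almost surely. Since $\nu$ is compactly supported it is determined by its moments (Carleman's condition), which vanish in odd degree and equal the Catalan number $C_{k/2}$ in even degree $k$. I would establish, for each fixed $k \in \mathbb{N}$, the moment asymptotics
\[
	\frac{1}{N(dnq)^{k/2}}\,\mathbb{E}\bigl[\mathrm{Tr}(H^k)\bigr] \;=\; C_{k/2}\,\ind_{k \text{ even}} + o(1),
\]
together with the variance bound $\mathrm{Var}\bigl( N^{-1}(dnq)^{-k/2}\,\mathrm{Tr}(H^k) \bigr) = O(n^{-2})$. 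Given these, the moment bounds control the tails of bounded continuous test functions, so $\mathbb{E}[L_{(dnq)^{-1/2}H}] \to \nu$ weakly; Chebyshev's inequality and Borel--Cantelli then upgrade this --- along the coupled sequence of complexes $(X(d,n,p))_n$ --- to almost sure weak convergence, tightness of the subsequential limits being supplied by convergence of the second moment.

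The moment identity is the core of the argument and the main obstacle. Expanding $\mathrm{Tr}(H^k) = \sum H_{\sigma_1\sigma_2}H_{\sigma_2\sigma_3}\cdots H_{\sigma_k\sigma_1}$ over closed walks $\sigma_1 \to \sigma_2 \to \cdots \to \sigma_k \to \sigma_1$ on $(d-1)$-cells, and recording for each walk the multiset of traversed $d$-cells $\tau_i := \sigma_i \cup \sigma_{i+1}$, one uses $\mathbb{E}[\chi_\tau - p] = 0$ and independence across distinct $d$-cells to see that only walks in which every traversed $d$-cell occurs at least twice survive in expectation; moreover the orientation signs along a backtracking walk cancel in pairs. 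The leading contribution is expected to come from the \emph{tree-like} walks --- those using each $d$-cell exactly twice and tracing out a tree on the level of $(d-1)$-cells, each new step revealing a fresh vertex --- of which there are $C_{k/2}$ combinatorial shapes when $k$ is even; summing over the $N$ choices of starting cell and, for each of the $k/2$ new steps, over the $d$ branchings among the remaining facets of the new $d$-cell and the $\approx n$ choices of fresh vertex, and using that each of the $k/2$ distinct $d$-cells contributes $\mathbb{E}[(\chi_\tau - p)^2] = q$, yields $\mathbb{E}[\mathrm{Tr}(H^k)] = N(dnq)^{k/2}(C_{k/2}+o(1))$. The genuine difficulty --- precisely the phenomenon highlighted in Section \ref{sec:Bounding-the-norm-of-H} --- is that for $d>1$ the entries of $H$ are dependent: several distinct walk-edges may carry the same $d$-cell, so the classical ``each edge crossed at least twice'' bookkeeping no longer isolates the main term, and one must show that all non-tree walks, and all walks exploiting such coincidences of $d$-cells, contribute only $o\bigl(N(dnq)^{k/2}\bigr)$ once $nq \to \infty$. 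This is exactly the combinatorial estimate developed for the operator-norm bound in Section \ref{sec:Bounding-the-norm-of-H}; here, however, $k$ is fixed while $n \to \infty$, so only the leading-order form of that estimate is needed, which is softer than the version controlling $\|H\|$ (where $k$ grows with $n$). The variance bound comes from applying the same walk expansion to $\mathbb{E}[\mathrm{Tr}(H^k)^2] - \mathbb{E}[\mathrm{Tr}(H^k)]^2$: two closed walks are correlated only if they share a $d$-cell, which forces their vertex sets to overlap and costs a power of $n$ relative to $\mathbb{E}[\mathrm{Tr}(H^k)]^2 \asymp N^2(dnq)^k$.
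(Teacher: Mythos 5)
Your plan matches the paper's proof essentially step for step: reduce to the centered operator via the rank-$\binom{n-1}{d-1}$ perturbation $p(\mathbb{A}+dI)$ plus the vanishing scalar shift $pd/\sqrt{dnq}$ (the paper does this in Section~\ref{sec:pf_thm26} via Weyl interlacing and an integration-by-parts estimate on the CDFs, which is the same content as the Kolmogorov/rank inequality you invoke), and then run the moment method, with variance control plus Borel--Cantelli for almost-sure convergence (Lemmas~\ref{lem:Semicircle_moment_lemma} and~\ref{lem:Semicircle_variance_lemma}).

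Two remarks. First, there is a genuine hole at the crux: you correctly identify that for $d>1$ one must show the maximal-support words are forced to trace out a tree on $(d-1)$-cells with each edge crossed exactly twice, but you only assert that non-tree walks and walks exploiting $d$-cell coincidences are negligible. That assertion is precisely Lemma~\ref{lem:Size_of_W_k/2_k/2+d}(1)--(2), which the paper proves via a nontrivial case analysis (it is explicitly called ``the main novelty of the proof''), and it is needed \emph{before} any counting: a priori a word with $N_w(\tau)\in\{0,2\}$ for all $\tau$ could have $|V_w\cap\partial\tau|>2$ for some $\tau$, giving a cycle in $G_w$, and then the $\mathcal{C}_{k/2}d^{k/2}$ enumeration would be incorrect. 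Second, the fixed-$k$ combinatorics you need lives in Section~3 (Claim~\ref{claim:bound_on_W_t_s} suffices crudely for the subleading terms since $k$ is fixed), not in Section~\ref{sec:Bounding-the-norm-of-H}; Proposition~\ref{prop:Main_lemma_for_norm_bound_on_H} is the separate, uniform-in-$k$ F\"uredi--Koml\'os estimate for the operator norm and is not what the semicircle law relies on.
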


\begin{rem}
\label{rem:optimal_constant}
	It follows from Theorem \ref{thm:Semi_circle_law} that the constant $2$ in the interval $\sqrt{dnq} \, \bigl[-2-\xi,2+\xi\bigr]$ from Theorem \ref{thm:location_of_eigenvalues}(1) is optimal. In fact, it is an easy corollary of Theorems \ref{thm:main_result} and \ref{thm:Semi_circle_law} that if $\lim_{n\to\infty}\frac{nq}{\log^{4}n}=\infty$ then $\lim_{n\to\infty}(dnq)^{-1/2}\lambda_{\binom{n-1}{d}} = 2$ and $\lim_{n\to\infty} (dnq)^{-1/2} \lambda_1 = -2$ almost surely. See also Remark \ref{rem:conv_eig} below.
\end{rem}

\paragraph{Conventions.}
	We use $C$ to denote a generic large positive constant, which may depend on some fixed parameters and whose value may change from one expression to the next.  If $C$ depends on some parameter $k$, we sometimes emphasize this dependence by writing $C_k$ instead of $C$. Moreover, for $f,g:\mathbb{N}\to\mathbb{R}$ we write $f(n)=O_{k}(g(n))$ to mean $f(n) \leq C_k g(n)$ for all $n\in\mathbb{N}$. Finally, we abbreviate $\llbracket n\rrbracket := \{1, \dots, n\}$. The letters $d,i,j,k,l,m,n,s,N$ are always used to denote an element
in $\mathbb{N}=\{ 0,1,2,\ldots\}$.

From now on, we consistently use $\sigma$ for (oriented or non-oriented) $(d-1)$-cells, and $\tau$ for (oriented or non-oriented) $d$-cells.


\section{Semicircle law for $A-\mathbb{E}[A]$}

In this section we prove the semicircle law for the centered and normalized adjacency matrix  
\begin{equation}
	H:=\frac{1}{\sqrt{nq}}(A-\mathbb{E}[A]).\label{eq:The_matrix_H}
\end{equation}

More precisely, we prove that the empirical spectral measure $L_H$ converges to the rescaled distribution $\nu_d(\mathrm{d}x) := \frac{\sqrt{(4d-x^{2})_+}}{2\pi d} \, \mathrm{d}x$. 

\begin{thm}\label{thm:Semicircle_for_H} 
	Fix $d\geq2$. If $\lim_{n\to\infty}nq=\infty$, then as $n \to \infty$ almost surely $L_{H}\overset{\cal D}{\longrightarrow}\nu_{d}$.
\end{thm}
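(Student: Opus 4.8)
The plan is to prove convergence of $L_H$ to $\nu_d$ via the method of moments: show that for every fixed $k$, the $k$-th moment $\frac1N \operatorname{tr}(H^k)$ converges almost surely to $\int x^k \, \nu_d(\mathrm{d}x)$, which are the (rescaled) Catalan numbers for even $k$ and $0$ for odd $k$. Since $\nu_d$ is compactly supported, moment convergence implies weak convergence. As usual, the almost-sure statement is obtained from convergence of expectations $\mathbb{E}[\frac1N\operatorname{tr}(H^k)]$ together with a variance bound $\operatorname{Var}(\frac1N\operatorname{tr}(H^k)) = O(n^{-2+o(1)})$ (or any summable bound) and a Borel--Cantelli argument over the coupled probability space; for the coupling along a subsequence and then interpolation, one uses that moments vary slowly in $n$.

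The combinatorial heart is computing $\mathbb{E}[\operatorname{tr}(A-\mathbb{E}[A])^k]$. Expanding the trace gives a sum over closed walks $\sigma_0 \overset{X}{\sim} \sigma_1 \overset{X}{\sim} \cdots \overset{X}{\sim} \sigma_k = \sigma_0$ on oriented $(d-1)$-cells; each step $\sigma_{i-1}\sim\sigma_i$ is witnessed by a $d$-cell $\tau_i$ containing both in its boundary, and the centering replaces $\mathbb{A}$-entries by $\chi_{\tau_i}-p$. I would encode each walk by the sequence of $d$-cells $(\tau_1,\dots,\tau_k)$ together with the orientation/sign data, and group walks by the isomorphism type of the resulting ``trajectory graph''. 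The key point, exactly as in the F\"uredi--Koml\'os argument invoked in Section~\ref{sec:Bounding-the-norm-of-H}, is that $\mathbb{E}[\prod_i(\chi_{\tau_i}-p)]$ vanishes unless every $d$-cell appearing among $\tau_1,\dots,\tau_k$ appears at least twice; this forces the number of distinct $d$-cells to be at most $k/2$. Counting the number of vertices that can appear: a walk visiting $m$ distinct $d$-cells touches at most roughly $md$ vertices, but the closed-walk and $(d-1)$-cell structure (consecutive $\sigma_i$ share a $(d-2)$-face, and returning to $\sigma_0$) cuts this down. One shows that the dominant contribution, of order $N \cdot (nq)^{k/2} \cdot (\text{number of admissible shapes})$, comes precisely from walks whose trajectory graph is a tree traversed so that each $d$-cell is used exactly twice and, crucially, the ``new'' $(d-1)$-cell reached at each forward step differs from the previous one in a single vertex — these are in bijection with Dyck-type paths and contribute the Catalan number, with the factor $d^{k/2}$ in $\nu_d$ coming from the $d$ choices of which vertex of $\tau_i$ to drop when passing from $\sigma_{i-1}$ to $\sigma_i$. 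All other shapes are shown to contribute $o(N (nq)^{k/2})$, using $nq \to \infty$ to kill terms with fewer than $k/2$ distinct $d$-cells and simple vertex-counting to kill non-tree or ``degenerate'' tree shapes.

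The main obstacle is precisely this last bookkeeping: for $d>1$ the matrix entries of $A$ are dependent (a single $d$-cell $\tau$ contributes to $\binom{d+1}{2}$ off-diagonal entries, with signs), so a walk can reuse a $d$-cell $\tau_i$ without literally re-crossing the same pair $(\sigma_{i-1},\sigma_i)$, and the naive ``each edge crossed twice'' reduction must be replaced by ``each $d$-cell used twice'', after which one must check that the extra algebraic freedom (different pairs of boundary faces of the same $\tau$, different orientations) does not inflate the count beyond the Catalan bound and does not spoil the sign cancellations. Concretely, I expect to need a careful analysis showing that when a $d$-cell is reused via a different pair of its boundary faces, the corresponding walk either closes up early (reducing the vertex count, hence subleading) or produces a sign that cancels against a paired walk. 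Since the excerpt explicitly says a F\"uredi--Koml\'os-type argument for simplicial complexes is developed in Section~\ref{sec:Bounding-the-norm-of-H}, I would quote the relevant admissible-walk counting lemmas from there and only carry out the refinement needed to extract the exact leading constant (the Catalan numbers) rather than just an upper bound on $\|H\|$.
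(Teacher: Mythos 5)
Your proposal follows essentially the same route as the paper: moment method with a variance bound plus Borel--Cantelli, expansion of $\operatorname{tr}(H^k)$ over closed walks on $(d-1)$-cells, factorization of the expectation over $d$-cells via independence, the observation that each $d$-cell must be crossed at least twice (since $\mathbb{E}[\chi-p]=0$), vertex-counting to identify the leading order, and the Catalan-count-times-$d^{k/2}$ from a bijection with labeled planar trees. You also correctly isolate the genuine difficulty for $d>1$: a $d$-cell can be reused through a different pair of its boundary faces, so the naive ``each edge crossed twice'' reduction must be upgraded.

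The one place where your sketch deviates from what the paper actually does is the resolution of that difficulty. You hedge between two mechanisms --- vertex-count reduction, or a sign cancellation against a paired walk. The paper uses only the first: Lemma~\ref{lem:Size_of_W_k/2_k/2+d}(2) runs a case analysis showing that for any word with the maximal number $k/2+d$ of $0$-cells, each $d$-cell in the support is associated with exactly one edge of $G_w$ (i.e., $|V_w\cap\partial\tau_i|=2$), by deriving a contradiction with the vertex count otherwise. In particular, for the leading-order words the sign $\mathrm{sgn}(w,\tau)$ is automatically $+1$; no cancellation between paired walks is needed, and for subleading words the sign is simply bounded by $1$. This is worth being precise about, because the sign-cancellation route would require pairing up walks with opposite signs, which the paper does not attempt and which seems harder to organize. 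So your proposal is correct as a high-level plan and matches the paper's strategy, but the key combinatorial lemma you leave unproved is exactly the crux; when you fill it in, you should commit to the vertex-count contradiction rather than the sign-pairing alternative.
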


Using Theorem \ref{thm:Semicircle_for_H}, we shall conclude the proof of Theorem \ref{thm:Semi_circle_law} in Section \ref{sec:pf_thm26} below. The proof of Theorem \ref{thm:Semicircle_for_H} is based on the moment method (see e.g.\ the presentation of \cite[Section 2.1]{AGZ10} for the classical case of independent matrix entries), and is the subject of the rest of this section. Along the proof, we also record several definitions and notions that will be used for the proof of Theorem \ref{thm:main_result} in Sections \ref{sec:Bounding-the-norm-of-H}--\ref{sec:location_of_eigenvalues}.

By a standard truncation argument and the Borel-Cantelli lemma, Theorem \ref{thm:Semicircle_for_H} follows from the two next lemmas (see \cite[Section 2.1.2]{AGZ10} for details). 

\begin{lem} \label{lem:Semicircle_moment_lemma} 
	If $\lim_{n\to\infty}nq=\infty$, then for every fixed $k\in\mathbb{N}$
	\[
		\lim_{n\to\infty}\mathbb{E}\left[\int_{\mathbb{R}}x^{k}L_{H}(\mathrm{d}x)\right]=	\int_{\mathbb{R}}x^{k}\nu_{d}(\mathrm{d}x)=
		\begin{cases}
			0 & \text{if }k\,\mbox{is odd}\\
			d^{k/2}\mathcal{C}_{k/2} & \text{if }k\,\mbox{is even}
		\end{cases},
	\]
	where $\mathcal{C}_{k}:=\frac{1}{k+1}\binom{2k}{k}$ is the $k$-th Catalan number. 
\end{lem}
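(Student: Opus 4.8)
\textbf{Proof proposal for Lemma \ref{lem:Semicircle_moment_lemma}.}

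The plan is to carry out the moment method via a Füredi–Komlós-type expansion adapted to the simplicial setting. Write $\mathbb{E}\left[\int x^k L_H(\mathrm{d}x)\right] = \frac{1}{N(nq)^{k/2}}\mathbb{E}[\operatorname{Tr}(A-\mathbb{E}[A])^k]$, and expand the trace as a sum over closed walks $\sigma_0 \overset{X}{\sim} \sigma_1 \overset{X}{\sim} \cdots \overset{X}{\sim} \sigma_k = \sigma_0$ in the space of oriented $(d-1)$-cells, keeping track of the skew-symmetric signs picked up along the way. The key structural point, which distinguishes $d>1$ from the classical case, is that a single step $\sigma_{i-1}\sim\sigma_i$ is realized through a $d$-cell $\tau_i$ obtained by adding one new vertex to (an appropriate face of) $\sigma_{i-1}$; two different oriented-cell steps may be governed by the \emph{same} underlying random variable $\chi_{\tau}$. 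So the right bookkeeping object is a walk on $d$-cells, and the centering $A-\mathbb{E}[A]$ forces, as usual, that each $d$-cell $\tau$ appearing in the walk must appear at least twice (otherwise the expectation factorizes and the term vanishes). I would encode each such closed walk by its ``shape'' — the isomorphism class of the labeled multigraph/CW-structure traced out, where vertices are the distinct vertices of $V$ that appear and edges record the $d$-cells — and count how many actual walks realize a given shape.

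The main steps, in order: (1) set up the walk expansion and the sign cancellation, reducing to walks in which every $d$-cell is traversed $\geq 2$ times; (2) show that the dominant contribution comes from walks in which every $d$-cell is traversed \emph{exactly} twice and the shape is a ``tree-like'' structure — concretely, the walk visits exactly $k/2+d$ distinct vertices (the $d$ vertices shared by all cells in a ``book'' plus one fresh vertex per doubled $d$-cell, arranged so the cells form a tree when we contract the common $(d-1)$-face structure), giving $\sim n^{k/2}$ choices of vertices and hence a contribution of order $N (nq)^{k/2}/N(nq)^{k/2}$, i.e.\ $O(1)$; (3) bound the combinatorial number of shapes and the signed count of walks of the dominant type, showing it equals $d^{k/2}\mathcal C_{k/2}$ in the even case — the Catalan number coming from the tree/Dyck-path structure of the back-and-forth walk exactly as in Wigner's theorem, and the factor $d^{k/2}$ from the $d$ choices (one for each vertex of the $(d-1)$-face being ``pivoted on'') available at each step of building the tree; (4) show all other contributions vanish as $n\to\infty$: walks with a cell of multiplicity $1$ vanish by centering, walks with a multiplicity-$\geq 3$ cell or with ``extra'' coincidences among vertices lose at least one factor of $n$ (or gain a factor $q \to 0$ from each cell beyond a minimal spanning set), and the odd-$k$ case has no tree-like term at all so the whole sum is $o(1)$. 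The sparsity regime $nq\to\infty$ is exactly what is needed so that each doubled cell contributes $\mathbb{E}[\chi_\tau^2 - \text{(lower order)}]/ q = p(1-p)/q \cdot(1+o(1)) = 1+o(1)$, while higher multiplicities bring in extra powers of $q$ that are not compensated.

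The hard part will be step (2)–(3): making precise the claim that the ``$d$-dimensional'' back-and-forth walks are in bijection (up to the sign and the $d^{k/2}$ multiplicity) with the Dyck-path/planar-tree structures counted by Catalan numbers, and in particular verifying that the skew-symmetric signs do \emph{not} produce cancellation among the dominant terms — for $d=1$ a doubled edge contributes $+1$ automatically, but for $d>1$ one must check that pivoting out to a new vertex and coming back along the boundary of the \emph{same} $d$-cell returns the same orientation (sign $+1$). Equally delicate is the error analysis in step (4): because the entries of $A$ are dependent, a naive ``each cell used $\geq 2$ times'' bound is not by itself enough — one has to rule out, with room to spare, the degenerate shapes where the underlying vertex set is smaller than $k/2+d$ yet the cell-multigraph is still ``connected enough'', and show each such degeneration costs a factor $n^{-1}$ or a factor $q$. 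This is precisely the ``nonlocal'' reduction mechanism the introduction alludes to, and I expect it to consume the bulk of the proof; a clean way to organize it is to fix the number $v$ of distinct vertices and the number $e$ of distinct $d$-cells, observe the walk is determined by a connected structure on these, hence $e \geq v-d$ roughly, and bound the contribution by $n^{v} q^{e} (nq)^{-k/2}\cdot(\text{combinatorial factor})$, checking this is maximized (and equal to a constant) exactly at the tree-like configuration and is $o(1)$ otherwise.
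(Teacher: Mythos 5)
Your plan follows the same route as the paper: expand the $k$-th moment as a sum over closed walks of oriented $(d-1)$-cells, use independence of the $\chi_\tau$'s together with centering to restrict to walks in which every $d$-cell is crossed at least twice, grade by the number of distinct $0$-cells in the support via the bound $|\mathrm{supp}_0(w)| \leq |\mathrm{supp}_d(w)| + d$, and identify the dominant contribution (at $s = k/2 + d$, even $k$) as tree-like walks counted by $\mathcal{C}_{k/2}\, d^{k/2}$, with the $d^{k/2}$ coming from one label in $\llbracket d\rrbracket$ per tree edge. You also correctly flag the need to verify that the skew-symmetric signs do not cause cancellation among leading terms, and that the dependence of the matrix entries makes the reduction nonlocal.

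However, you acknowledge but do not close the central gap, which is exactly the paper's Lemma \ref{lem:Size_of_W_k/2_k/2+d}. Even granting that every $d$-cell in $\mathrm{supp}_d(w)$ is crossed exactly twice, for $d > 1$ a single $d$-cell $\tau$ can a priori correspond to up to $\binom{d+1}{2}$ distinct edges of the walk graph $G_w$. It therefore does not follow ``as in Wigner's theorem'' that each edge of $G_w$ is crossed exactly twice, that $G_w$ is a tree, or that $\mathrm{sgn}(w,\tau) = +1$. The paper resolves this by proving $|V_w \cap \partial\tau_i| = 2$ for every $d$-cell in the support, via a case analysis (Cases 1, 2, 3.1--3.3) that rules out a third boundary $(d-1)$-cell of $\tau_i$ ever appearing in the walk, and then builds an explicit bijection between $\mathcal{W}_{k/2+d}^k$ and rooted planar trees with $\llbracket d\rrbracket$-labeled edges. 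Your proposal names the final count and correctly anticipates where the difficulty lies (``the hard part will be step (2)--(3)''), but the argument eliminating the degenerate multi-edge-per-cell configurations --- the main combinatorial novelty of the proof --- is not supplied, so as written the identification of the leading coefficient with $\mathcal{C}_{k/2} d^{k/2}$ is asserted rather than proved.
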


\begin{lem}\label{lem:Semicircle_variance_lemma} 
	For every fixed $k\in\mathbb{N}$ we have
	\begin{equation}
		\mathrm{Var}\left(\int_{\mathbb{R}}x^{k}L_{H}(\mathrm{d}x)\right)\leq O_{k}\left(\frac{1}{n^{d}(nq)}\right).\label{eq:variance_estimation}
	\end{equation}
\end{lem}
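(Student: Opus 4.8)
The plan is to prove Lemma~\ref{lem:Semicircle_variance_lemma} by the standard second-moment computation for the moment method, adapted to the dependency structure of the Linial--Meshulam adjacency operator. Write $M_k := \int_{\mathbb{R}} x^k L_H(\mathrm{d}x) = \frac{1}{N}\operatorname{tr} H^k$, so that
\begin{equation*}
	\operatorname{Var}(M_k) = \frac{1}{N^2}\Bigl(\mathbb{E}\bigl[(\operatorname{tr} H^k)^2\bigr] - \bigl(\mathbb{E}[\operatorname{tr} H^k]\bigr)^2\Bigr).
\end{equation*}
First I would expand $\operatorname{tr} H^k = (nq)^{-k/2}\sum f_{\sigma_0}(\sigma_0) \cdots$ (more precisely, as a sum over closed walks $\sigma_0 \overset{X}{\sim} \sigma_1 \overset{X}{\sim} \cdots \overset{X}{\sim} \sigma_k = \sigma_0$ of oriented $(d-1)$-cells) of products of centered entries of $A$, and similarly $(\operatorname{tr} H^k)^2$ as a sum over \emph{pairs} of closed walks $(W, W')$ of lengths $k$ each. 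Expectation factorizes over independent $d$-cells; the contribution of a pair $(W, W')$ vanishes unless every independent variable $\chi_\tau$ appearing in $W \cup W'$ appears at least twice, and in particular every $d$-cell touched by the walk must be used at least twice among the two walks combined.

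The key dichotomy is between pairs $(W, W')$ whose edge sets (in the auxiliary multigraph on oriented $(d-1)$-cells, or equivalently whose sets of underlying $d$-cells) are \emph{disjoint} and those that \emph{share at least one $d$-cell}. For disjoint pairs, the two walks are independent, so their joint contribution equals $\mathbb{E}[\operatorname{tr} H^k \mid W\text{-part}]\,\mathbb{E}[\operatorname{tr} H^k\mid W'\text{-part}]$ summed appropriately, and this piece is cancelled exactly by the subtracted term $(\mathbb{E}[\operatorname{tr} H^k])^2$ — so only pairs of walks that are ``connected'' through a shared $d$-cell survive. The plan is then to bound the surviving sum by a combinatorial/counting argument: a connected pair of closed walks of total length $2k$ that traverses $V$ distinct $d$-cells, each at least twice, spans at most (something like) $k$ distinct $d$-cells, hence at most on the order of $n^{dk+\text{const}}$ vertices' worth of labelings — but the requirement that the combined walk be connected forces one fewer ``free'' vertex-block than in the computation of $\mathbb{E}[\operatorname{tr} H^k]$ alone, which is where the extra factor $n^{-d}$ comes from; and the requirement that each $d$-cell be used at least twice, together with the normalization $(nq)^{-k}$ for $2k$ entries, produces at least one surplus factor $(nq)^{-1}$ relative to the number of vertex choices. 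Here I would reuse, essentially verbatim, the bookkeeping and graph-encoding machinery developed for the first-moment estimate in Lemma~\ref{lem:Semicircle_moment_lemma} (counting admissible walks by the number of distinct $d$-cells they traverse, tracking which entries of $A$ each step uses, and controlling the combinatorial factors from orientations and from the boundary relation $\sim$), so that the variance bound reduces to a slightly more careful version of the same estimate with one extra connectivity constraint.

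The main obstacle I expect is the combinatorial heart: making precise the claim that ``connectedness of the pair costs a factor $n^{-d}$, and the doubling constraint costs a factor $(nq)^{-1}$,'' uniformly over all pair-types, given the \emph{dependent} entry structure for $d>1$. Unlike the classical Wigner case, crossing an ``edge'' of the walk (a pair of neighboring $(d-1)$-cells) does not pin down the independent variable, since many pairs of $(d-1)$-cells share the same $d$-cell; so one must carefully argue in terms of the underlying $d$-cells rather than the $(d-1)$-cell adjacencies, and verify that the worst-case pair-types — e.g.\ where the two walks overlap on a long common stretch, or share only a single $d$-cell — still obey the bound. Since $k$ is \emph{fixed} and we only need an $O_k(\cdot)$ estimate (not uniformity in $k$), I can afford to be generous with $k$-dependent constants, which should make the surplus-factor accounting manageable; the final step is then simply to divide by $N^2 \asymp n^{2d}$ and collect the powers of $n$ and of $nq$ to arrive at the claimed $O_k\bigl(\tfrac{1}{n^{d}(nq)}\bigr)$.
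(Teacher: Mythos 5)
The paper omits the proof of this lemma, invoking a ``standard adaptation'' of the argument for Lemma~\ref{lem:Semicircle_moment_lemma} and referring to \cite[Section 2.1.2]{AGZ10}, so there is no written argument to compare against in detail. Your high-level plan --- expand $(\mathrm{tr}\,H^k)^2$ over pairs of closed walks, cancel pairs with non-interacting supports against $(\mathbb{E}[\mathrm{tr}\,H^k])^2$, and bound the surviving pairs with the same word-counting machinery --- is exactly the intended route, so structurally you are aligned with the paper.

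Two issues. First, the parenthetical ``or equivalently whose sets of underlying $d$-cells'' is false for $d>1$: the walks $W,W'$ can have disjoint edge sets in $G_W\cup G_{W'}$ yet share a $d$-cell $\tau$, since they may traverse different pairs of $(d-1)$-cells in $\partial\tau$. Independence of the two contributions, and hence the cancellation, is governed by the $d$-cell supports and not by edge sets; you recognize this a few lines later, but the earlier sentence contradicts it and would classify some non-independent pairs as cancelling. Second, and more substantively, the step you label ``the main obstacle'' is a genuine gap, not bookkeeping that $k$-dependent constants can absorb. Writing $s=|\mathrm{supp}_0(W,W')|$ and $t=|\mathrm{supp}_d(W,W')|$, the contribution of a surviving pair is at most $n^s q^t/(N^2(nq)^k)$, and one needs $s\le k+d-1$; the naive concatenation of Claim~\ref{claim:number_of_0_and_d_cells} with the doubling bound $t\le k$ gives only $s\le k+d$, which is off by a full factor of $nq$ and yields $O_k(n^{-d})$ rather than the claimed $O_k(n^{-d}(nq)^{-1})$. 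A closing argument that works: fix a shared $d$-cell $\tau_0$. If $\tau_0$ is crossed at least three times in total then $\sum_\tau N(\tau)\ge 2t+1$ forces $t\le k-1$, and $s\le d+t\le k+d-1$. Otherwise $\tau_0$ is crossed exactly once by each of $W,W'$; cyclically rotate $W$ so that its $\tau_0$-crossing is its last step and rotate $W'$ to begin at a $(d-1)$-cell in $\partial\tau_0$. Then the unique first crossing of $\tau_0$ over the combined schedule is the loop-closing step of $W$, which cannot reveal a new $0$-cell, so $\tau_0$'s quota is unused, $W'$ begins inside $\mathrm{supp}_0(W)$, and $s\le d+(t-1)\le k+d-1$. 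Some argument at this level of detail is genuinely required; the missing factor is a power of $nq$, not a constant depending on $k$.
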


The rest of this section is devoted to the proof of Lemma \ref{lem:Semicircle_moment_lemma}. The proof of Lemma \ref{lem:Semicircle_variance_lemma} is a standard adaptation of the ideas of the proof of Lemma \ref{lem:Semicircle_moment_lemma}, and we omit its details. We begin with definitions that we use throughout the remainder of the paper.

First, we need an explicit matrix representation of the adjacency operator. We fix an arbitrary choice of orientation of the $(d-1)$-cells $K_{+}^{d-1}\subset K_{\pm}^{d-1}$, which in turn induces a choice of orientation on the $(d-1)$-cells of $X$ since $X^{d-1}=K^{d-1}$. Note that there is a natural bijection between $K_{+}^{d-1}$ and $K^{d-1}$, and hence also between $X_{+}^{d-1}$ and $X^{d-1}$. From now on, by a slight abuse of notation, using this bijection we often write $\sigma_1 \cup \sigma_2$ and $\sigma_1 \cap \sigma_2$ for oriented $(d-1)$-cells $\sigma_1,\sigma_2$ to denote the union and intersection of the corresponding unoriented cells.

The orientation $K_{+}^{d-1}\subset K_{\pm}^{d-1}$ gives rise to an associated orthonormal basis $(\ind_{\sigma})_{\sigma \in X_+^{d-1}}$ of $\Omega^{d-1}$,  defined by
\[
	\ind_{\sigma}(\sigma')=
	\begin{cases}
		1 & \quad \text{if } \sigma'=\sigma\\
		-1 & \quad \text{if } \sigma'=\overline{\sigma}\\
		0 & \quad \mbox{otherwise}
	\end{cases}.
\]
Then the adjacency matrix is an $X_+^{d-1} \times X_+^{d-1}$ matrix $(A_{\sigma \sigma'})$ with entries $A_{\sigma \sigma'} := \langle \ind_\sigma , A \ind_{\sigma'} \rangle$. Explicitly,
\begin{equation}
	A_{\sigma,\sigma'}=
	\begin{cases}
		1 & \quad \text{if }\sigma\overset{_{X}}{\sim}\sigma'\\
		-1 & \quad \text{if }\sigma\overset{_{X}}{\sim}\overline{\sigma'}\\
		0 & \quad \mbox{otherwise}
	\end{cases}.
	\label{eq:adj_matrix}
\end{equation}
In particular,
\begin{equation}
	\mathbb{E}[A]=p\mathbb{A},\label{eq:expectation_of_adj_matrix}
\end{equation}
where we recall that $\mathbb{A}$ is the adjacency operator of the complete $d$-complex $K$.

Next, we introduce the basic definitions underlying the proof of Lemma \ref{lem:Semicircle_moment_lemma}. They are illustrated in Figure \ref{fig:defn_illustration}.

\begin{defn}[Words] \label{def:letter_words_and_equivalence} 
	An $(n,d)$-letter $\sigma$ (or shortly a \emph{letter}) is an element of 	$X_{+}^{d-1}$. An $(n,d)$-word $w$ (or shortly a \emph{word}) is a finite sequence $\sigma_{1}\ldots\sigma_{k}$ of letters at least one letter long such that $\sigma_{i}\cup\sigma_{i+1}$ is a $d$-cell in $K$ for every $1\leq i\leq k-1$. The \emph{length} of the word $\sigma_{1}\ldots\sigma_{k}$ is defined to be $k$. A word is called \emph{closed} if its first and last letters are the same. Two words $w=\sigma_{1}\ldots\sigma_{k}$ and $w'=\sigma'_{1}\ldots\sigma'_{k}$ are called \emph{equivalent} if there exists a permutation $\pi$ on $X^{0}=V$ such that $\pi(\sigma_{i})=\sigma'_{i}$ for every $1\leq i\leq k$, where for $\sigma=[\sigma^{0},\ldots,\sigma^{d-1}]\in X_{\pm}^{d-1}$
we write $\pi(\sigma)=[\pi(\sigma^{0}),\ldots,\pi(\sigma^{d-1})]$. 
\end{defn}

\begin{defn}[Support of a word]\label{def:length_supp_and_d-supp_for_words}
	For a word $w=\sigma_{1}\ldots\sigma_{k}$ we define its \emph{support} by $\mathrm{supp}_{0}(w)=\sigma_{1}\cup\sigma_{2}\cup\ldots\cup\sigma_{k}$ and its \emph{$d$-cell support} by $\mathrm{supp}_{d}(w)=\{ \sigma_{i}\cup\sigma_{i+1}\,:\,1\leq i\leq k-1\}$.
\end{defn}

\begin{defn}[Graph of a word]\label{def:graph_of_a_word} 
	Given a word $w=\sigma_{1}\ldots\sigma_{k}$ we define $G_{w}=(V_{w},E_{w})$ to be the graph with vertex set $V_{w}=\{ \sigma_{i}\,:\,1\leq i\leq k\}$ and edge set $E_{w}=\{\{ \sigma_{i},\sigma_{i+1}\} \,:\,1\leq i\leq k-1\}$. To avoid confusion, from this point on we consistently use the words vertex and edge to describe elements of $V_{w}$ and $E_{w}$ respectively; elements in $X^{0}$ and $X^{1}$ are always referred to as $0$-cells and $1$-cells. 

The graph $G_{w}$ comes with a path, given by the word $w$, that goes through all of its vertices and edges. We call each step along the path, i.e.\ $\sigma_{i}\sigma_{i+1}$ for some $1\leq i\leq k-1$, a \emph{crossing} of the edge $\{ \sigma_{i},\sigma_{i+1}\}$ and a \emph{crossing} of the $d$-cell $\sigma_{i}\cup\sigma_{i+1}$. The path gives a natural ordering of the vertices and edges of $G_{w}$, and of the $d$-cells in $\mathrm{supp}_{d}(w)$ according to the order of their first crossing along $w$. Finally, for an edge $e\in E_{w}$ define $N_{w}(e)$ to be the number of times the edge $e$ is crossed along the path generated by $w$ in the graph $G_{w}$. Given an edge $e\in E_{w}$ and $1\leq i\leq N_{w}(e)$, the \emph{$i$-th crossing time of the edge $e$} is given by the unique $1\leq j\leq k-1$ such that $\sigma_{j}\sigma_{j+1}$ is the $i$-th crossing of the edge $e$ along the path $w$. 
\end{defn}

The edges of a graph $G_{w}$, associated with a word $w=\sigma_{1}\ldots\sigma_{k}$, can be divided into different classes according to the $d$-cell generated by the two $(d-1)$-cells in its endpoints. This is done as follows.

\begin{defn}\label{def:Counting_crossing_in_a_graph_word} 
	For a $d$-cell $\tau$ let $E_{w}(\tau)=\{ \{ \sigma,\sigma'\} \in E_{w}\,:\,\sigma\cup\sigma'=\tau\}$ and define (with a slight abuse of notation) 
\[
	N_{w}(\tau)=\sum_{e\in E_{w}(\tau)}N_{w}(e)
\]
to be the total number of times the $d$-cell $\tau$ is crossed along the path generated by the word $w$. As for edges of the graph, given $\tau\in X^{d}$ and $1\leq i\leq N_{w}(\tau)$ we define the $i$-th crossing time of a $d$-cell $\tau$ to be the unique $1\leq j\leq k-1$ such that $\sigma_{j}\sigma_{j+1}$ is the $i$-th crossing of the $d$-cell $\tau$ along the path $w$. 

For $\sigma,\sigma'\in X_{+}^{d-1}$ the condition $\sigma\cup\sigma'\in K^{d}$ can mean one of two things, either $\sigma\overset{_{K}}{\sim}\sigma'$ or $\sigma\overset{_{K}}{\sim}\overline{\sigma'}$. For future use we denote the set of non-neighboring edges whose union is $\tau$ by 
\[
	\widehat{E}_{w}(\tau)=\big\{ \{ \sigma,\sigma'\} \in E_{w}(\tau)\,:\,\sigma\overset{_{K}}{\sim}\overline{\sigma'}\big\},
\]
so that $E_{w}(\tau)\setminus\widehat{E}_{w}(\tau)=\{ \{ \sigma,\sigma'\} \in E_{w}(\tau)\,:\,\sigma\overset{_{K}}{\sim}\sigma'\} $. 
\end{defn}

\begin{figure}[h]
	\centering{}\includegraphics[scale=0.5]{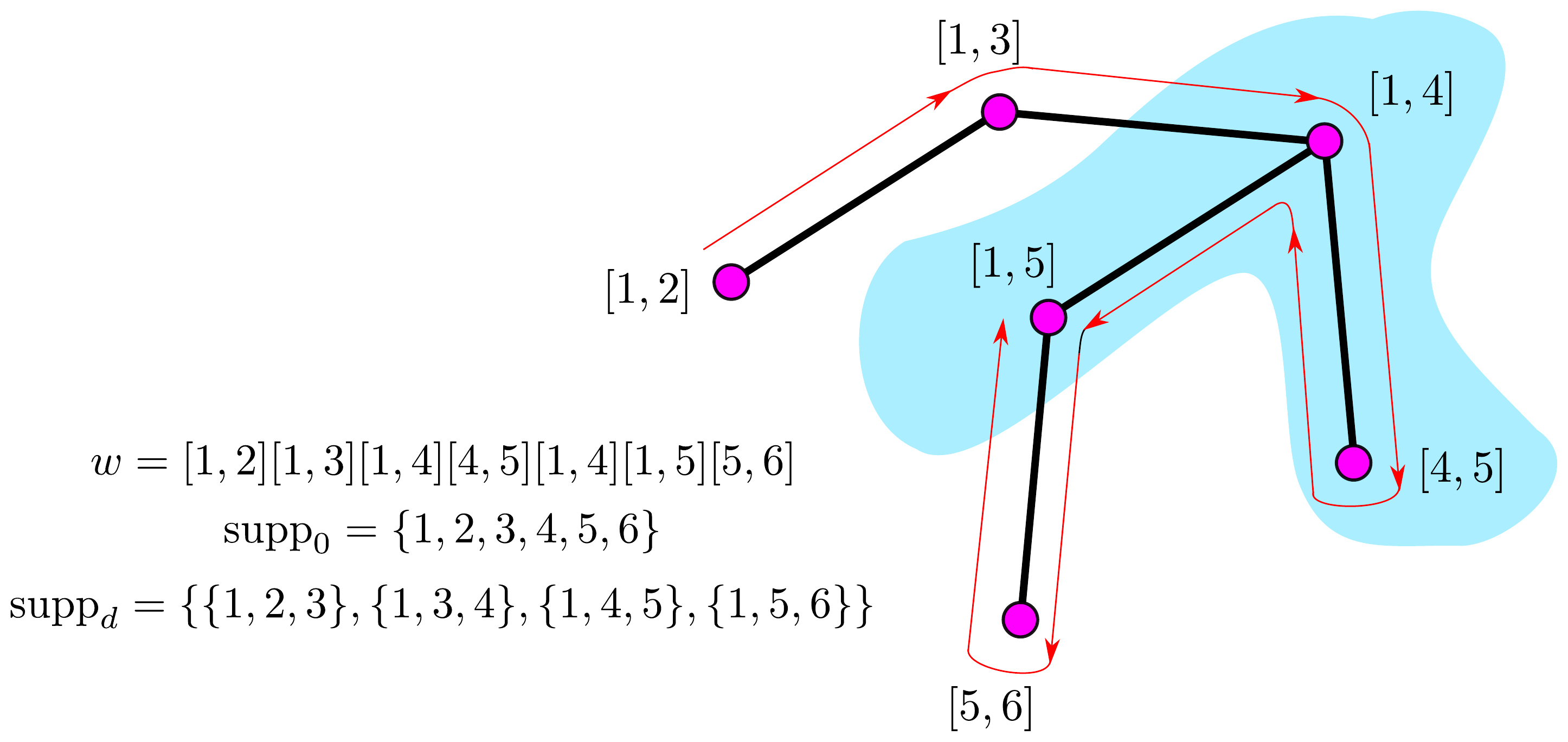}	
	\caption{\label{fig:defn_illustration} We illustrate Definitions \ref{def:letter_words_and_equivalence}-\ref{def:Counting_crossing_in_a_graph_word}
for the word $w=[1,2][1,3][1,4][4,5][1,4][1,5][5,6]$ (note that $w$ is indeed a word since the union of any two consecutive $1$-cells is a $2$-cell). The $0$-cell support $\mathrm{supp}_{0}(w)$ and $d$-cell support $\mathrm{supp}_{d}(w)$ (for $d=2$) of the word $w$ are given in the figure and the associated graph is drawn. The path induced by $w$ on the graph $G_{w}$ is illustrated by the red path. Note that the edges $\{[1,4],[1,5]\} $  and $\{[1,4],[4,5]\}$ (see blue area) belong to the same $2$-cell $\{ 1,4,5\} $. We therefore
have that $N_{w}(\{ 1,4,5\})=N_{w}(\{ [1,4],[1,5]\})+N_{w}(\{[1,4],[4,5]\})=2+1=3$.
As for the non-neighboring edges, we have for example $\widehat{E}_{w}(\{ 1,4,5\})=\{\{ [1,4],[4,5]\}\} $. The first crossing time of the edge $\{[1,4],[4,5]\}$ is $3$ and the third crossing time of the $d$-cell $\{ 1,4,5\} $ is $5$.}
\end{figure}

We remark that for $d=1$, there is no distinction between $N_w(\tau)$ and $N_w(e)$, since in that case the edges of the graph are naturally associated with $1$-cells.

Abbreviate $B=A-\mathbb{E}[A]=A-p\mathbb{A}$. For a fixed $k\in\mathbb{N}$, we wish to understand the limiting behavior of $\mathbb{E}\left[\int_{\mathbb{R}}x^{k}L_{H}\left(\mathrm{d}x\right)\right]$. Since for $k=0$ we have $\mathbb{E}\left[\int_{\mathbb{R}}L_{H}\left(\mathrm{d}x\right)\right]=1$ we will assume that $k\geq 1$. The starting point of the moment method is the identity
\begin{equation} \label{eq:semicircle_for_H_1} 
	\mathbb{E}\left[\int_{\mathbb{R}}x^{k}L_{H}(\mathrm{d}x)\right] =\frac{1}{N(nq)^{k/2}}\sum_{\sigma_{1},\ldots,\sigma_{k}\in X_{+}^{d-1}}\mathbb{E}[B_{\sigma_{1}\sigma_{2}}B_{\sigma_{2}\sigma_{3}}\ldots B_{\sigma_{k-1}\sigma_{k}}B_{\sigma_{k}\sigma_{1}}],
\end{equation}
which follows easily from \eqref{eq:empirical_measure}.

Note that each term in the sum can be associated with a string of letters
$\sigma_{1}\sigma_{2}\ldots\sigma_{k}\sigma_{1}$. Since $B_{\sigma,\sigma'}=0$
whenever $\sigma\cup\sigma'\notin X^{d}$ it follows that we can restrict
the sum in (\ref{eq:semicircle_for_H_1}) to the case where $\sigma_{i}\cup\sigma_{i+1}\in X^{d}$ for $1\leq i\leq k-1$ and $\sigma_{k}\cup\sigma_{1}\in X^{d}$. Consequently,
the list of letters in the sum can be restricted to the set of closed
words of length $k+1$. Using the independence structure of $A$ for
different $d$-cells and the definition of $N_{w}$ we then have 
\begin{equation}
	\mathbb{E}\left[\int_{\mathbb{R}}x^{k}L_{H}(\mathrm{d}x)\right]=
	\sum_{\substack{w\,\,\footnotesize{\mbox{a closed word}}\\
\footnotesize{\mbox{of length }}k+1}}
\frac{1}{N(nq)^{k/2}}\prod_{\tau\in X^{d}}\mathbb{E}\left[(\chi-p)^{N_{w}(\tau)}\right]\mathrm{sgn}(w,\tau),\label{eq:semicircle_for_H_2}
\end{equation}
where $\chi$ is a Bernoulli random variable with parameter $p$ and 
\begin{equation}
\mathrm{sgn}(w,\tau)=(-1)^{\sum_{e\in\widehat{E}_{w}(\tau)}N_{w}(e)}\label{eq:sign_function}
\end{equation}
is a sign that depends on the parity of the number of crossings between non-neighboring $(d-1)$-cells in $w$. We write $T(w)$ for the contribution of the word $w$ to the sum in (\ref{eq:semicircle_for_H_2}), that is 

\begin{equation}
	T(w)=\frac{1}{N(nq)^{k/2}}\prod_{\tau\in X^{d}}\mathbb{E}\left[(\chi-p)^{N_{w}(\tau)}\right]\mathrm{sgn}(w,\tau).\label{eq:semicircle_for_H_4}
\end{equation}
Since $\mathbb{E}[\chi-p]=0$, it follows that $T(w)=0$ unless $N_{w}(\tau)\neq 1$ for every $\tau\in X^{d}$. We can thus restrict the sum in (\ref{eq:semicircle_for_H_2}) to words such that $N_{w}(\tau)\neq 1$ for every $\tau\in X^{d}$.

Recalling the definition of the equivalence relation for words, see Definition \ref{def:letter_words_and_equivalence}, we denote by $[w]$ the equivalence class of $w$ and observe that, because the term $T(w)$ is invariant under permutations of $X^{0}$, we have $T(w)=T(w')$ for every pair of words $w,w'$ such that $[w]=[w']$.

Using the last observation and (\ref{eq:semicircle_for_H_4}) we can rewrite (\ref{eq:semicircle_for_H_2}) as 
\begin{equation}
	\mathbb{E}\left[\int_{\mathbb{R}}x^{k}L_{H}\left(\mathrm{d}x\right)\right]=\sum_{w}|[w]| \, T(w),
	\label{eq:semicircle_for_H_5}
\end{equation}
where the sum is over a set of representatives for the equivalence classes of closed words of length $k+1$ such that $N_{w}(\tau)\neq 1$ for every $\tau\in X^{d}$. 

Next, we distinguish between different equivalence classes according to the number of $0$-cells in their support. Denote by $\mathcal{W}_{s}^{k}=\mathcal{W}_{s}^{k}(n,d)$ a set of representatives for the equivalence classes of closed words of length $k+1$ such that $N_{w}(\tau)\neq 1$ for every $\tau\in X^{d}$ and $|\mathrm{supp}_{0}(w)|=s$. Note that this parameter is independent of the choice of the representative in the equivalence class. Using the fact that $\sum_{\tau\in X^{d}}N_{w}(\tau)=k$ and recalling that we only consider words such that $N_{w}(\tau)\neq 1$ for every $\tau\in X^{d}$, it follows that the number of $d$-cells such that $N_{w}(\tau)>0$ is bounded by $\left\lfloor\frac{k}{2}\right\rfloor$. 

\begin{claim}\label{claim:number_of_0_and_d_cells} 
	$|\mathrm{supp}_{0}(w)|\leq|\mathrm{supp}_{d}(w)|+d$ for every word $w$. 
\end{claim}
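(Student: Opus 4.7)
The plan is to track how the zero-cell support $\mathrm{supp}_0$ grows as I scan the word $w = \sigma_1 \sigma_2 \ldots \sigma_k$ from left to right, and to match each growth step against a newly created $d$-cell in $\mathrm{supp}_d(w)$. The first letter $\sigma_1$ is a $(d-1)$-cell and so contributes exactly $|\sigma_1| = d$ vertices to the running support. For every subsequent step $\sigma_i \to \sigma_{i+1}$, the union $\tau_i := \sigma_i \cup \sigma_{i+1}$ is, by the definition of a word, a $d$-cell, so $|\tau_i| = d+1$; hence $|\sigma_i \cap \sigma_{i+1}| = d-1$ and $\sigma_{i+1}$ contains precisely one vertex $v_i$ not in $\sigma_i$. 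Thus each step can add at most one new vertex to the running support $V_i := \sigma_1 \cup \cdots \cup \sigma_i$.

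The key observation I will use is that whenever a step $i \to i+1$ genuinely adds a new vertex, the associated $d$-cell $\tau_i$ must itself be new, i.e., $\tau_i \notin \{\tau_1, \ldots, \tau_{i-1}\}$ where $\tau_j := \sigma_j \cup \sigma_{j+1}$. Indeed, any prior occurrence $\tau_j = \tau_i$ with $j < i$ would force the supposedly new vertex $v_i \in \tau_i = \tau_j$ to lie in $V_{j+1} \subseteq V_i$, contradicting its novelty. This will give an injection from the set of ``growth steps'' into $\mathrm{supp}_d(w)$, yielding
$$|\mathrm{supp}_0(w)| - d \;=\; \#\{\text{growth steps}\} \;\leq\; |\mathrm{supp}_d(w)|,$$
which is the claim. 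I do not foresee any real difficulty here; the statement is essentially combinatorial bookkeeping, and the only point requiring care is the verification that a new-vertex step forces a new-$d$-cell step, which I have outlined above.
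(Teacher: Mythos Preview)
Your proposal is correct and follows essentially the same approach as the paper's proof: start with the $d$ vertices of $\sigma_1$, observe that each subsequent crossing $\sigma_i\sigma_{i+1}$ can introduce at most one new $0$-cell, and that this can only happen if the $d$-cell $\sigma_i\cup\sigma_{i+1}$ has not appeared before. The paper's proof is simply a terser version of exactly this argument; your additional detail (the contrapositive showing $\tau_i$ new when $v_i$ is new) merely spells out what the paper states in one line.
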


\begin{proof}
	The word $w$ starts in a $(d-1)$-cells which contains $d$ distinct $0$-cells. In order to obtain a new $0$-cells in the $i$-th crossing, one must observe in the crossing $\sigma_{i}\sigma_{i+1}$ a new $d$-cell. Since there are $|\mathrm{supp}_{d}(w)|$ distinct $d$-cells in $w$ the result follows. 
\end{proof}

Consequently, $d\leq|\mathrm{supp}_{0}(w)|\leq|\mathrm{supp}_{d}(w)|+d\leq\left\lfloor \frac{k}{2}\right\rfloor +d$.  Using (\ref{eq:semicircle_for_H_5}) together with the last claim we thus conclude that 
\begin{align}
	\mathbb{E}\left[\int_{\mathbb{R}}x^{k}L_{H}(\mathrm{d}x)\right] & =\sum_{s=d}^{\left\lfloor \frac{k}{2}\right\rfloor +d}\sum_{w\in\mathcal{W}_{s}^{k}}|[w]|\, T(w)\nonumber \\
 & =\sum_{s=d}^{\left\lfloor \frac{k}{2}\right\rfloor +d}\frac{1}{N(nq)^{k/2}}\sum_{w\in\mathcal{W}_{t,s}^{k}}|[w]|\prod_{\tau\in X^{d}}\mathbb{E}\left[(\chi-p)^{N_{w}(\tau)}\right]\mathrm{sgn}(w,\tau).\label{eq:semicircle_H_6}
\end{align}

We record the following simple claims, whose proofs are straightforward. 

\begin{claim}\label{claim:B_n_s_d}
	For every $w\in\mathcal{W}_{s}^{k}$, the number of elements in $[w]$ is $B_{n,s,d}:=\frac{n(n-1)\cdots(n-s)}{d!}$ and in particular is bounded by $n^{s}$.
\end{claim}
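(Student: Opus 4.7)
The plan is to view $[w]$ as an orbit under the natural action of $S_n = \mathrm{Sym}(V)$ on the set of words, where $\pi\in S_n$ acts vertex-wise: for $\sigma = [\sigma^0, \ldots, \sigma^{d-1}] \in X_\pm^{d-1}$ we set $\pi(\sigma) := [\pi(\sigma^0), \ldots, \pi(\sigma^{d-1})]$, and $\pi(\sigma_1\cdots\sigma_k) := \pi(\sigma_1)\cdots\pi(\sigma_k)$. By Definition \ref{def:letter_words_and_equivalence}, the orbit of $w$ under this action is precisely $[w]$, so by orbit-stabilizer $|[w]| = |S_n|/|\mathrm{Stab}(w)|$ up to the correction coming from restricting to the fixed orientation class $X_+^{d-1}$.

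The first observation is that the action of $\pi$ on an oriented letter $\sigma = [\sigma^0,\ldots,\sigma^{d-1}]$ is determined by the ordered tuple $(\pi(\sigma^0),\ldots,\pi(\sigma^{d-1}))$; consequently, two permutations $\pi, \pi' \in S_n$ satisfy $\pi(w) = \pi'(w)$ if and only if they agree pointwise on $\mathrm{supp}_0(w)$. Hence the orbit size equals the number of distinct restrictions $\pi\vert_{\mathrm{supp}_0(w)} \colon \mathrm{supp}_0(w) \to V$ that arise from $\pi \in S_n$ subject to the constraint that $\pi(\sigma_i) \in X_+^{d-1}$ for each $i$ (so that $\pi(w)$ is again a word in the sense of Definition \ref{def:letter_words_and_equivalence}).

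Since any such restriction is an injective map from an $s$-element set into $V$, and since the number of such injections is $n(n-1)\cdots(n-s+1)$, we obtain the upper bound $|[w]| \leq n(n-1)\cdots(n-s+1) \leq n^s$, which is the estimate used throughout the sequel. The factor $d!$ appearing in the denominator of $B_{n,s,d}$ records the global orientation redundancy at the initial letter $\sigma_1$: the $d$ vertices of $\sigma_1$ admit $d!$ orderings, all of which are identified in $X_+^{d-1}$ when we count the orbit, yielding the stated exact formula. I expect no obstacle here; this is a straightforward counting exercise, and only the upper bound $|[w]| \leq n^s$ will be used in the moment-method computation that follows.
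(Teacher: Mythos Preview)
The paper does not prove this claim, merely calling it straightforward. Your orbit-counting argument for the bound $|[w]| \leq n(n-1)\cdots(n-s+1) \leq n^s$ is correct and is the only part that is actually used in the subsequent estimates.

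Your justification of the exact formula, however, does not go through, and in fact the exact statement is problematic. The factor $d!$ cannot arise as a ``global orientation redundancy at $\sigma_1$'': a nontrivial even permutation of the vertices of $\sigma_1$ fixes $\sigma_1$ as an oriented cell but will in general move the later letters $\sigma_2,\ldots,\sigma_k$, so it does not lie in the stabilizer of $w$. More seriously, with the equivalence relation of Definition~\ref{def:letter_words_and_equivalence} the class size $|[w]|$ depends on $w$, not only on $s$. For example, take $d=2$, $k=2$, and the orientation $X_+^1=\{[i,j]:i<j\}$. The words $[1,2][1,3][1,2]$ and $[1,2][2,3][1,2]$ are both in $\mathcal{W}_3^2$ and are inequivalent, yet their classes have sizes $2\binom{n}{3}$ and $\binom{n}{3}$ respectively: the orientation constraint $\pi(\sigma_i)\in X_+^1$ forces $\pi(1)<\pi(2),\,\pi(1)<\pi(3)$ in the first case but $\pi(1)<\pi(2)<\pi(3)$ in the second. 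So no single constant $B_{n,s,d}$ can equal $|[w]|$ for every $w\in\mathcal{W}_s^k$.

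What \emph{is} true, and sufficient for the paper's purposes, is the upper bound $|[w]|\leq n^s$ (which you proved) together with the correct asymptotics for the \emph{total} $\sum_{w\in\mathcal{W}_s^k}|[w]|$, i.e.\ the total number of closed words with given support size; this is what feeds into the leading-term computation after Lemma~\ref{lem:Size_of_W_k/2_k/2+d}. Your closing remark that only the $n^s$ bound matters downstream is essentially right.
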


\begin{claim}\label{claim:bound_on_W_t_s}
	For every $k\geq0$ and $d\leq s\leq\left\lfloor \frac{k}{2}\right\rfloor +d$ we have
	\begin{equation}
		|\mathcal{W}_{s}^{k}|\leq k^{dk}.\label{eq:semicircle_H_9}
\end{equation}
\end{claim}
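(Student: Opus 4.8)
The plan is to bound $|\mathcal{W}_s^k|$ by counting canonically labelled representatives of the equivalence classes, building such a representative one letter at a time. Every equivalence class of closed words of length $k+1$ with $|\mathrm{supp}_0(w)|=s$ contains a representative $w=\sigma_1\cdots\sigma_{k+1}$ with $\mathrm{supp}_0(w)=\{1,\dots,s\}$ and $\sigma_1=[1,2,\dots,d]$: it is enough to apply a permutation of $V$ that sends the $d$ vertices of the first letter to $1,\dots,d$ in the order given by its orientation, and the remaining $0$-cells of the support to $d+1,\dots,s$. Since each equivalence class contains such a representative, $|\mathcal{W}_s^k|$ is at most the number of such canonically labelled closed words (this also drops the requirement $N_w(\tau)\neq1$, which only enlarges the count).

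The one structural fact I would use is that consecutive letters overlap maximally: as $\sigma_i\cup\sigma_{i+1}$ is a $d$-cell it has exactly $d+1$ vertices, so $|\sigma_i\cap\sigma_{i+1}|=d-1$, and $\sigma_{i+1}$ arises from $\sigma_i$ by deleting one of its $d$ vertices and adjoining one vertex of $\{1,\dots,s\}$ not already in $\sigma_i$. Hence, given $\sigma_1,\dots,\sigma_i$, the letter $\sigma_{i+1}$ is determined by the deleted vertex (at most $d$ choices), the adjoined vertex (at most $s-d$ choices, since $|\sigma_i|=d$ and the adjoined vertex lies in $\{1,\dots,s\}\setminus\sigma_i$), and the orientation of the resulting $(d-1)$-cell (at most $2$ choices). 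With $\sigma_1$ fixed and $\sigma_{k+1}=\sigma_1$ forced, only $\sigma_2,\dots,\sigma_k$ remain to be chosen, so
\[
	|\mathcal{W}_s^k|\;\leq\;\bigl(2d(s-d)\bigr)^{k-1}
\]
for $k\geq2$; the cases $k\in\{0,1\}$ are immediate ($\mathcal{W}_s^0$ consists of one class, and $\mathcal{W}_s^1=\emptyset$ since a closed word of length $2$ would require $\sigma_1\cup\sigma_1\in K^d$).

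To conclude, I would invoke the hypothesis $s\leq\lfloor k/2\rfloor+d$, which gives $s-d\leq\lfloor k/2\rfloor\leq k/2$ and hence $2d(s-d)\leq dk$, together with the elementary inequality $dk\leq k^d$, valid for $d\geq2$ and $k\geq2$ (it is equivalent to $d\leq k^{d-1}$, which holds since $k^{d-1}\geq 2^{d-1}\geq d$). Combining these,
\[
	|\mathcal{W}_s^k|\;\leq\;(dk)^{k-1}\;\leq\;(k^d)^{k-1}\;=\;k^{dk-d}\;\leq\;k^{dk},
\]
which is the claim. I do not anticipate any real obstacle: the whole argument is elementary counting, and the only point that genuinely matters is to observe that the maximal overlap of consecutive letters makes the number of choices for each new letter linear in $s$ — hence in $k$ — rather than of order $s^d$, which is exactly what keeps the product below $k^{dk}$.
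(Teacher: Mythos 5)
Your argument is correct, and it is the natural one (the paper declares this claim ``straightforward'' and omits a proof). Canonically labelling the support as $\{1,\dots,s\}$ with $\sigma_1$ supported on $\{1,\dots,d\}$, observing that $\sigma_{i+1}$ differs from $\sigma_i$ by deleting one of $d$ vertices and adjoining one of at most $s-d$ fresh vertices (with at most $2$ orientations), and then using $s-d\le\lfloor k/2\rfloor$ and $dk\le k^d$ gives exactly the stated bound; the edge cases $k\in\{0,1\}$ are handled as you note, and dropping the constraint $N_w(\tau)\ne 1$ only enlarges the count, as you correctly point out.
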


Also, observe that for $w\in\mathcal{W}_{s}^{k}$ 
\begin{align}\label{eq:semicircle_H_8}
	\prod_{\tau\in X^{d}}\mathbb{E}\left[(\chi-p)^{N_{w}(\tau)}\right]   & = \prod_{\substack{\tau\in X^{d}\\ N_{w}(\tau)\geq 2}} p\left(1-p\right)\left[(1-p)^{N_{w}(\tau)-1}+p^{N_{w}(\tau)-1}\right]\\ \nonumber
& \leq\prod_{\substack{\tau\in X^{d}\\ N_{w}(\tau)\geq2}} p(1-p)=q^{|\mathrm{supp}_{d}(w)|}\leq q^{|\mathrm{supp}_{0}(w)|-d}=q^{s-d},
\end{align}
where for the last inequality we used Claim \ref{claim:number_of_0_and_d_cells}.

Combining \eqref{eq:semicircle_H_8}, Claim \ref{claim:B_n_s_d} and Claim \ref{claim:bound_on_W_t_s} we conclude that for odd $k$ and large enough $n$
\begin{multline*}
	  \left|\sum_{s=d}^{\left\lfloor \frac{k}{2}\right\rfloor +d}\frac{B_{n,s,d}}{N(nq)^{k/2}}\sum_{w\in\mathcal{W}_{s}^{k}}\prod_{\tau\in X^{d}}\mathbb{E}\left[(\chi-p)^{N_{w}(\tau)}\right]\mathrm{sgn}(w,\tau)\right|
\leq  \sum_{s=d}^{\left\lfloor \frac{k}{2}\right\rfloor +d}\frac{n^{s}q^{s-d}}{N(nq)^{k/2}}|\mathcal{W}_{s}^{k}|
\\
\leq\sum_{s=d}^{\left\lfloor \frac{k}{2}\right\rfloor +d}\frac{(nq)^{s}}{Nq^{d}(nq)^{k/2}} k^{dk}
\overset{(1)}{\leq}  \sum_{s=d}^{\left\lfloor \frac{k}{2}\right\rfloor +d}\frac{(nq)^{\left\lfloor \frac{k}{2}\right\rfloor +d}}{Nq^{d}(nq)^{k/2}} k^{dk}\leq\frac{k}{2} k^{dk} d!(nq)^{\left\lfloor \frac{k}{2}\right\rfloor -\frac{k}{2}}=O_{k,d}\left(\frac{1}{(nq)^{1/2}}\right),
\end{multline*}
where for $\left(1\right)$ we used that $\lim_{n\to\infty}nq=\infty$, and in particular $nq\geq1$ for large enough $n$. This completes the proof for odd $k$.

Similarly, when $k$ is even, one can separate the sum to $s=\frac{k}{2}+d$
which we denote by $R_{1}$ and the sum over $s<\frac{k}{2}+d$ which
we denote by $R_{2}$, and obtain
\[
	R_{2}=O_{k,d}\left(\frac{1}{nq}\right).
\]
Thus, for $k$ even we have
\begin{align*}
	\mathbb{E}\left[\int_{\mathbb{R}}x^{k}L_{H}(\mathrm{d}x)\right] & =R_{1}+O_{k,d}\left(\frac{1}{nq}\right)\\
 & =\frac{B_{n,\frac{k}{2}+d,d}}{N(nq)^{k/2}}\sum_{w\in\mathcal{W}_{k/2+d}^{k}}\prod_{\tau\in X^{d}}\mathbb{E}\left[(\chi-p)^{N_{w}(\tau)}\right]\mathrm{sgn}(w,\tau)+O_{k,d}\left(\frac{1}{nq}\right).
\end{align*}

The following Lemma contains the key estimates needed to complete the proof.  

\begin{lem}\label{lem:Size_of_W_k/2_k/2+d} 
	The following claims hold for even $k$ and $w\in\mathcal{W}_{k/2+d}^{k}$.
	\begin{enumerate}
		\item $N_{w}(\tau)\in\{ 0,2\} $ for every $d$-cell $\tau$.
		\item $|E_{w}|=\frac{k}{2}$ and $N_{w}(e)=2$ for every $e\in E_{w}$. In particular $\mathrm{sgn}(w,\tau)=1$ for every $\tau\in X^{d}$. 
		\item $|\mathcal{W}_{k/2+d}^{k}|=\mathcal{C}_{\frac{k}{2}}d^{k/2}$.
\end{enumerate}
\end{lem}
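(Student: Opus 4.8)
The plan is to analyze the combinatorics of closed words $w$ of length $k+1$ with maximal support $|\mathrm{supp}_0(w)| = k/2 + d$ and $N_w(\tau) \neq 1$ for all $d$-cells $\tau$, and show these are exactly the ``high-dimensional Dyck path'' words. First I would prove (1): since $\sum_\tau N_w(\tau) = k$ and every nonzero $N_w(\tau)$ is at least $2$, the number of $d$-cells actually crossed is at most $\lfloor k/2 \rfloor = k/2$. On the other hand, by the proof of Claim \ref{claim:number_of_0_and_d_cells}, to reach $|\mathrm{supp}_0(w)| = k/2 + d$ we need at least $k/2$ distinct $d$-cells in $\mathrm{supp}_d(w)$, each contributing a new $0$-cell. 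Hence equality holds everywhere: there are exactly $k/2$ crossed $d$-cells, and since $\sum_\tau N_w(\tau) = k$ with each term $\geq 2$, every crossed $\tau$ has $N_w(\tau) = 2$ exactly. This gives (1).

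Next, for (2), I would argue that each of the $k/2$ crossed $d$-cells $\tau$ must have its two crossings occur on a \emph{single} edge $e \in E_w(\tau)$, i.e.\ $E_w(\tau)$ effectively carries both crossings on one edge (one could a priori have $N_w(\tau) = 1 + 1$ split over two distinct edges $\{\sigma,\sigma'\}, \{\sigma,\sigma''\}$ with $\sigma'\cup\sigma'' $ not relevant, but $\sigma'\cup\sigma = \sigma''\cup\sigma = \tau$ forces... ) — here I need to be a little careful. The cleaner route: the graph $G_w$ has a closed walk of length $k$ (the word traverses $k$ crossings/edges-with-multiplicity) visiting $|V_w|$ vertices; since $\mathrm{supp}_0(w)$ has $k/2 + d$ elements and each new $0$-cell beyond the initial $d$ appears together with a new $d$-cell, which in turn (since it corresponds to a step $\sigma_i\sigma_{i+1}$) introduces at most one new vertex $\sigma_{i+1}$, a counting/induction argument shows $G_w$ is a tree. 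A closed walk of length $k$ on a tree that uses exactly $k/2$ edges must traverse each edge exactly twice (a closed walk on a tree crosses every edge it uses an even number of times, and $\sum_e N_w(e) = k = 2\cdot(k/2)$ forces each to be exactly $2$). That the tree has exactly $k/2$ edges and $k/2+1$ vertices then needs to be reconciled with $|V_w| \le |\mathrm{supp}_0(w)| - d + 1 = k/2+1$; so $G_w$ is a tree with $k/2$ edges. Since $N_w(e) = 2$ and distinct edges lie in distinct $d$-cells (as $|\mathrm{supp}_d(w)| = k/2 = |E_w|$), we get $\widehat E_w(\tau)$ contributing an even total, so $\mathrm{sgn}(w,\tau) = (-1)^{\text{even}} = 1$; actually $N_w(e)=2$ is even for the single relevant edge, so the exponent in \eqref{eq:sign_function} is even regardless. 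This gives (2).

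For (3), the count $|\mathcal{W}_{k/2+d}^k| = \mathcal{C}_{k/2} \, d^{k/2}$: the walk on $G_w$ is a closed walk on a tree traversing each of its $k/2$ edges exactly twice — these are in bijection with rooted plane trees on $k/2$ edges, hence counted by the Catalan number $\mathcal{C}_{k/2}$ (this is the standard ``Dyck path / exploration walk'' bijection underlying the classical Wigner computation). The extra factor $d^{k/2}$ comes from the high-dimensional structure: each time the walk traverses an edge $\{\sigma_i, \sigma_{i+1}\}$ for the \emph{first} time, passing from the $(d-1)$-cell $\sigma_i$ to a new $(d-1)$-cell $\sigma_{i+1}$ inside the new $d$-cell $\tau = \sigma_i \cup \sigma_{i+1}$, we must choose which of the $d$ vertices of $\sigma_i$ to remove (and replace by the unique new $0$-cell) — that is exactly $d$ choices per edge of the tree, and these choices are independent across the $k/2$ tree edges (the equivalence relation quotients out the labels of the actual vertices, so only the ``shape'' of each replacement matters). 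I should also check that every such choice yields a genuinely admissible word (the resulting $\sigma_{i+1}\cup\sigma_{i+2}$ is always a $d$-cell, automatically, and no unwanted coincidences of $d$-cells occur because we are in the maximal-support regime), and that the orientation bookkeeping does not reduce the count — it does not, since orientations of cells are fixed by the chosen basis and each word determines its signs.

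The main obstacle I expect is the rigorous proof that $G_w$ is a tree and the precise bijection in (3): one has to set up carefully the correspondence between the walk structure on $G_w$, the plane-tree structure giving $\mathcal{C}_{k/2}$, and the $d$-fold vertex-removal choice, and verify that this map is a bijection onto $\mathcal{W}_{k/2+d}^k$ — in particular that no two distinct (tree-shape, choice-sequence) pairs give equivalent words and that every word in $\mathcal{W}_{k/2+d}^k$ arises this way. Establishing that distinct choices of ``which $0$-cell to drop'' at each first-crossing lead to inequivalent words (rather than coinciding after a vertex permutation) is the subtle point, and is where the assumption $|\mathrm{supp}_0(w)| = k/2+d$ (forcing every first crossing to genuinely introduce a fresh $0$-cell, so the choices are ``visible'' and cannot be undone by a permutation) is essential.
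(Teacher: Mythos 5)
Your argument for part (1) is correct and is the same as the paper's. Your sketch for part (3) --- bijection between $\mathcal{W}_{k/2+d}^k$ and rooted planar trees on $k/2$ edges with an edge-labelling in $\llbracket d\rrbracket$, with the $d$ choices encoding ``which $0$-cell of $\sigma_i$ is dropped at the first crossing of each tree edge'' --- is also the paper's construction, and you correctly flag the points that need checking (injectivity and surjectivity of the map; the role of maximal support).

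The genuine gap is in part (2), and it is exactly at the spot you flag as ``a little careful''. Your ``cleaner route'' rests on the inequality
$|V_w|\le|\mathrm{supp}_0(w)|-d+1$, but this inequality is \emph{false in general}: a step $\sigma_i\sigma_{i+1}$ can introduce a new vertex $\sigma_{i+1}$ without introducing a new $0$-cell (e.g., for $d=2$, $\sigma_1=[1,2]$, $\sigma_2=[1,3]$, $\sigma_3=[2,3]$: $\sigma_3$ is a new vertex but $\{2,3\}\subset\{1,2,3\}$ contributes no new $0$-cell). What you can deduce from $|\mathrm{supp}_0(w)|=k/2+d$ is only the \emph{lower} bound $|V_w|\ge k/2+1$. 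The matching upper bound --- equivalently, that every new vertex of $G_w$ does bring a fresh $0$-cell --- is precisely what has to be proved, and once you know it, the tree property and $|E_w|=k/2$ follow easily. So your chain ``$G_w$ is a tree $\Rightarrow$ $|E_w|=k/2$'' begs the question: establishing the tree property (equivalently, that $|V_w\cap\partial\tau|=2$ for every crossed $d$-cell $\tau$, i.e., that the two crossings of $\tau$ occur on the \emph{same} edge of $G_w$) is the whole content of part (2). This is what the paper's proof of (2) does, via the case analysis on the minimal $\tau_i$ with $|V_w\cap\partial\tau_i|>2$ and the position of the first ``extra'' boundary cell $\sigma_m$ (the three cases, with case~3 further split in three). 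Nothing in your proposal replaces this step; the phrase ``a counting/induction argument shows $G_w$ is a tree'' is where the missing argument would have to go. Also note that the paper's proof order is (1) $\to$ (2) $\to$ ``$G_w$ is a tree'' (inside the proof of (3)): the tree property is a \emph{consequence} of (2), not a route to it, which makes your proposed detour circular as stated.
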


Assuming Lemma \ref{lem:Size_of_W_k/2_k/2+d}, the proof of Lemma \ref{lem:Semicircle_moment_lemma} is now complete. Indeed, using the first two equalities in (\ref{eq:semicircle_H_8}) together with Lemma \ref{lem:Size_of_W_k/2_k/2+d} yields for even $k$
\begin{equation}
	\mathbb{E}\left[\int_{\mathbb{R}}x^{k}L_{H}(\mathrm{d}x)\right]
=\frac{n(n-1)\cdots(n-k/2-d)}{d! N(nq)^{k/2}}\,\mathcal{C}_{\frac{k}{2}}d^{k/2} q^{\frac{k}{2}}+O_{k,d}\left(\frac{1}{nq}\right),
\end{equation}
which gives 
\[
	\lim_{n\to\infty}\mathbb{E}\left[\int_{\mathbb{R}}x^{k}L_{H}(\mathrm{d}x)\right]=\mathcal{C}_{\frac{k}{2}}d^{k/2}.
\]

What remains, therefore, is the proof of Lemma \ref{lem:Size_of_W_k/2_k/2+d}, which  contains the main novelty of the proof. In the graph case, $d = 1$, the fact that each $d$-cell is crossed either
twice or not at all implies the same for the edges of the graph $G_{w}$. For $d \geq 2$ this is no longer true, and it is not immediate that part (1) of
Lemma \ref{lem:Size_of_W_k/2_k/2+d} implies part (2), as each $d$-cell
can be associated with up to $\binom{d+1}{2}$ different edges of
$G_{w}$.

\begin{proof}[Proof of Lemma \ref{lem:Size_of_W_k/2_k/2+d}]
	Fix $w=\sigma_{1}\sigma_{2}\ldots\sigma_{k}\sigma_{k+1}\in\mathcal{W}_{k/2+d}^{k}$
with $\sigma_{k+1}=\sigma_{1}$. We begin with the proof of (1). As observed before, the combination of the upper bound $|\mathrm{supp}_{d}(w)|\leq\frac{k}{2}$ with Claim \ref{claim:number_of_0_and_d_cells} implies that for $w\in\mathcal{W}_{k/2+d}^{k}$ we have $|\mathrm{supp}_{d}(w)|=\frac{k}{2}$, which means that there are exactly $\frac{k}{2}$ distinct $d$-cells such that $N_{w}(\tau)\geq 2$ (recall that it is impossible to have $N_{w}(\tau)=1$). Since $w$ is also a closed word of length $k+1$ we conclude that 
\[
	\frac{k}{2}\cdot 2\leq\sum_{\tau\in X^{d}}N_{w}(\tau)=\sum_{e\in E_{w}}N_{w}(e)=k
\]
and therefore $N_{w}(\tau)=2$ for each of the $d$-cells in $\mathrm{supp}_{d}(w)$. This concludes the proof of (1).

Next, we prove (2). Let $\tau_{1},\ldots,\tau_{\frac{k}{2}}$ be the $d$-cells crossed along the path generated by $w$ in the order of their appearance. Since the $d$-cells appear along the path generated by $w$, we must have that for every $1\leq i\leq\frac{k}{2}-1$ the $d$-cell $\tau_{i+1}$ is attached to one of the $d$-cells $\tau_{1},\ldots,\tau_{i}$ along a joint $(d-1)$-cell in their boundary, and in particular $\tau_{i+1}$ can add at most one new $0$-cell to the $0$-cells in $\tau_{1}\cup\ldots\cup\tau_{i}$ . Noting that $\mathrm{supp}_{0}(w)=\tau_{1}\cup\ldots\cup\tau_{\frac{k}{2}}$, and recalling that for $w\in\mathcal{W}_{k/2+d}^{k}$ we have $|\mathrm{supp}_{0}(w)|=\frac{k}{2}+d$, it follows that for every $1\leq i\leq\frac{k}{2}$ the $d$-cell $\tau_{i+1}$ contains exactly one $0$-cell that does not belong to $\tau_{1}\cup\ldots\cup\tau_{i}$.

Next, we show that $|V_{w}\cap\partial\tau_{i}|=2$ for every $1\leq i\leq\frac{k}{2}$, and thus in particular that the number of edges in $E_{w}$ is $\frac{k}{2}$, each of which is crossed precisely twice. The argument is separated into three cases, one of which is then further splitted into three subcases. Assume that there exists $\tau\in\mathrm{supp}_{d}(w)$ such that $|V_{w}\cap\partial\tau|>2$ and let $1\leq i\leq\frac{k}{2}$ be the minimal index such that $|V_{w}\cap\partial\tau_{i}|>2$.  Assume further that $\sigma_{l}\sigma_{l+1}$ is the first crossing of $\tau_{i}$ and $\sigma_{m}$ is the first appearance of a $(d-1)$-cell
in $\partial\tau_{i}$ which is not $\sigma_{l}$ or $\sigma_{l+1}$. Note that it is impossible to have $m=l$ or $m=l+1$. 

\begin{itemize}
	\item \textbf{Case 1:} $m<l$ (see Figure \ref{fig:Cases_of_tree_lemma}(a)). This case is impossible since it implies that the first crossing of $\tau_{i}$ does not add a new $0$-cell to the ones in $\tau_{1}\cup\ldots \cup \tau_{i-1}$.  
	\item \textbf{Case 2: $m>l+1$} and the first time the $d$-cell $\sigma_{m-1}\cup\sigma_{m}$ appears along the path generated by $w$ is in the crossing $\sigma_{m-1}\sigma_m$ (see Figure \ref{fig:Cases_of_tree_lemma}(b)). This is impossible, because it means that the $d$-cell $\tau_{j}=\sigma_{m-1}\cup\sigma_{m}$ does not add a new $0$-cell to the ones in $\tau_{1}\cup\ldots\cup\tau_{j-1}$ as it should. 
	\item \textbf{Case 3: }$m>l+1$ and the $d$-cell $\tau_{j}=\sigma_{m-1}\cup\sigma_{m}$
already appeared along the path generated by $w$ before the crossing $\sigma_{m-1}\sigma_m$.
	\begin{itemize}
		\item \textbf{Case 3.1:} $j<i$. This is impossible, because it implies that $\tau_{i}$ is not the first $d$-cell with the property that $|V_{w}\cap\partial\tau|>2$ in the list. 
		\item \textbf{Case 3.2: }$j=i$ (see Figure \ref{fig:Cases_of_tree_lemma}(c.2)). This is only possible if $m=l+2$ (otherwise $m$ is not the first appearance after $l+1$ of a $(d-1)$-cell from the boundary of $\tau_{i}$). However, in this case one can define a new word $w'=\sigma_{l+2}\ldots\sigma_{k+1}\sigma_{2}\ldots\sigma_{l}\sigma'\sigma_{l}$, where $\sigma'$ is a $(d-1)$-cell which is a neighbor of $\sigma_{l}$ and contains a $0$-cell that does not belong $\mathrm{supp}_{0}(w)$. The word $w'$ has the same number of $d$-cells as the original word $w$, namely $|\mathrm{supp}_{d}(w')|=\frac{k}{2}$, but has one more $0$-cell than $w$, i.e., $|\mathrm{supp}_{0}(w')|=\frac{k}{2}+d+1$. This however contradicts Claim \ref{claim:number_of_0_and_d_cells}.
		\item \textbf{Case 3.3: }$j>i$ (see Figure \ref{fig:Cases_of_tree_lemma}(c.3)). Denote by $\{ \sigma_{r},\sigma_{r+1}\} $ the first edge crossed in the $d$-cell $\tau_{j}$ with $l+1<r<m$. This gives yet another contradiction since the first appearance of the $d$-cell $\tau_{j}$ did not add a new $0$-cell to the ones in $\tau_{1}\cup\ldots\cup\tau_{j-1}$
as it should (we know that $\sigma_{r+1}$ is obtained from $\sigma_{r}$ by removing one vertex and necessarily adding a vertex which belongs to $\tau_{i}$ or otherwise $\sigma_{m}$ does not belong to $\partial\tau_{i}$). 
	\end{itemize}
\end{itemize}

Since all cases leads to contradiction we conclude that $|V_{w}\cap \partial \tau_{i}|=2$ for every $1\leq i\leq\frac{k}{2}$ and in particular, there is exactly one edge associated with each $d$-cell, namely $|E_{w}(\tau)|=1$ for every $\tau\in\mathrm{supp}_{d}(w)$. This concludes the proof of (2).

\begin{figure}[!h]
	\begin{centering}
	\includegraphics[scale=0.6]{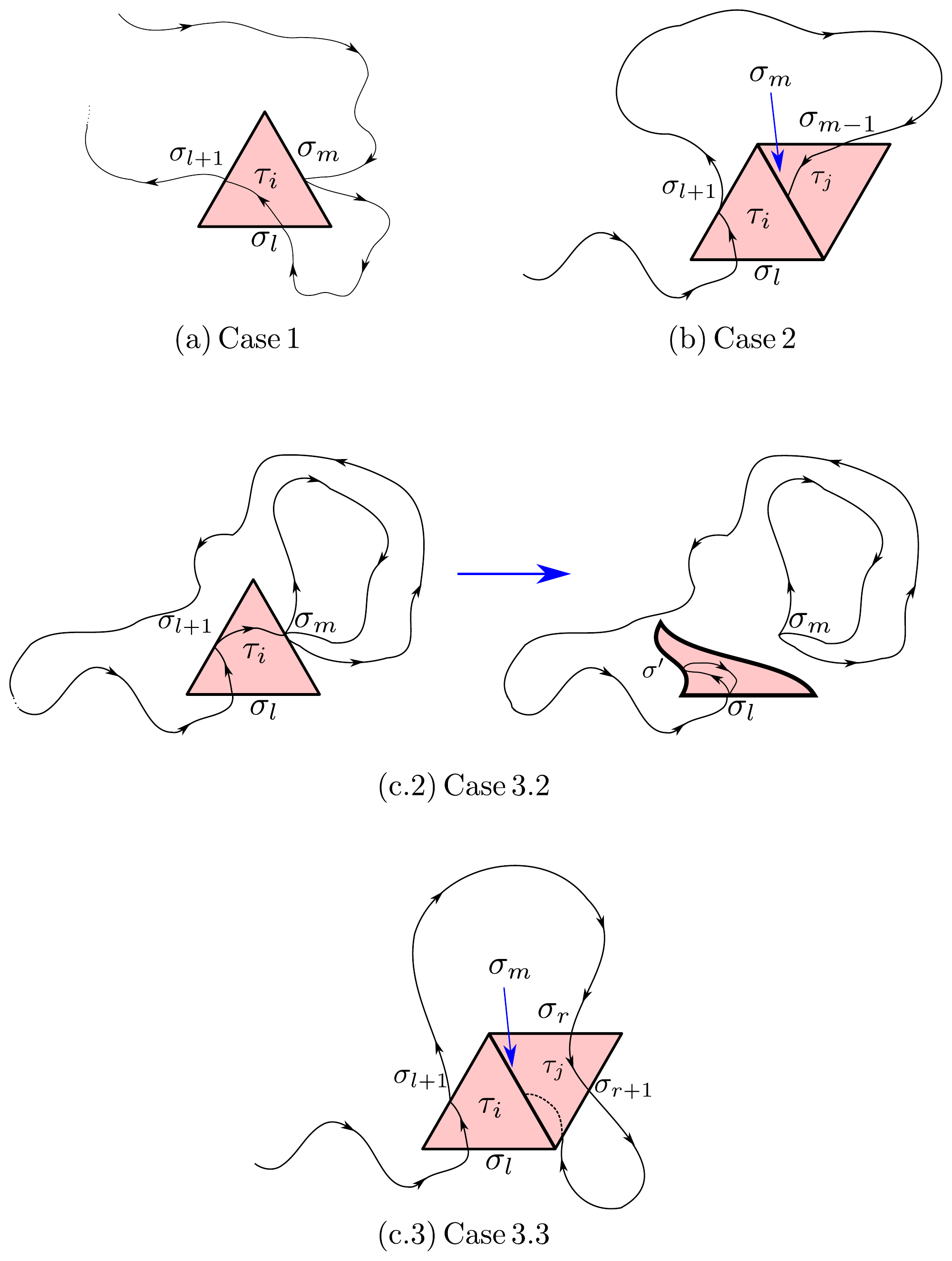}
	\caption{An illustration for the different cases of a $d$-cell $\tau$ with $|V_{w}\cap\partial\tau|>2$.
	\label{fig:Cases_of_tree_lemma}}
\par\end{centering}
\end{figure}

Finally, we prove (3). We start by showing that for $w\in\mathcal{W}_{k/2+d}^{k}$ the graph $G_{w}$ is a tree. Indeed, the graph $G_{w}$ is connected by definition and it cannot contain a loop since this would imply that one of the $d$-cells does not add a new $0$-cell in its first appearance. Since $G_{w}$ is a tree that comes with an additional path covering its vertices and edges we can think of $G_{w}$ as a rooted planar tree by declaring the first letter of $w$ to be its root and choosing a planar embedding of it that will make the path generated by $w$ into a clockwise exploration path of the tree (see \cite[Lemma 2.1.6]{AGZ10}).

Finally, we turn to evaluate $|\mathcal{W}_{k/2+d}^{k}|$. It is well known, see for example \cite[Lemma 2.1.6]{AGZ10}, that the set rooted planar trees with $\frac{k}{2}$ edges is in bijection with Dyck paths of length $k$ and is thus of size $\mathcal{C}_{k/2}$. We will show that the set $\mathcal{W}_{k/2+d}^{k}$ is in bijection with the set of rooted, labeled planar trees with $\frac{k}{2}$ edges or more formally the set of rooted planar trees with a label
from $\llbracket d\rrbracket $ attached to each of the tree edges. Since the total number of possible labelings is $d^{k/2}$ it follows that $|\mathcal{W}_{k/2+d}^{k}|=\mathcal{C}_{k/2}d^{k/2}$, thus completing the proof.

It is hence left to construct the aforementioned bijection. To this end, we label the $0$-cells of $X$ by the numbers in $\llbracket n\rrbracket $ and associate with every $j$-cell $\sigma=\{ \sigma^{0},\ldots,\sigma^{j}\} \in\binom{\llbracket n\rrbracket }{j+1}$ an ordering $\langle \sigma\rangle =(\sigma^{i_{0}},\ldots,\sigma^{i_{j}})$ such that $\sigma^{i_{0}}<\sigma^{i_{1}}<\ldots<\sigma^{i_{j}}$. In order to make the bijection simpler to write we choose to work with the specific choice of orientation $X_{+}^{j}$ associated with
the above ordering defined by $X_{+}^{j}=\{ [\sigma^{0},\ldots,\sigma^{j}]\,:\,\sigma^{0}<\ldots<\sigma^{j}\} $. Finally, we fix a special representative $w\in\mathcal{W}_{k/2+d}^{k}$ by requiring that the $0$-cells in $w$ are $\{ 1,\ldots,\frac{k}{2}+d\} $ and that they appear along the path $w$ in increasing order.

Given $w\in\mathcal{W}_{k/2+d}^{k}$ define its rooted, labeled planar tree $(G_{w},\ell_{w})$ by letting $G_{w}$ be the graph of the word $w$ as defined in Definition \ref{def:graph_of_a_word} with the planar embedding that makes $w$ into a clockwise exploration path. The labeling $\ell_{w}:E_{w}\to\llbracket d\rrbracket$ is then defined as follows: For an edge $\{ \sigma,\sigma'\} \in E_{w}$ let $1\leq i\leq k$ be the first integer such that $\{ \sigma_{i},\sigma_{i+1}\} =\{ \sigma,\sigma'\} $. Then we define $\ell(\{ \sigma,\sigma'\} )=j$ if and only if the $(d-2)$-cell $\sigma_{i}\cap\sigma_{i+1}$
is obtained from $\sigma_{i}$ by deleting the $j$-th smallest $0$-cell in it (see Figure \ref{fig:bijection_of_words_and_labeled_trees} for an illustration).

In the other direction, given a rooted, labeled planar tree
$(G=(V,E),\ell:E\to\llbracket d\rrbracket)$, define $w=\sigma_{1}\ldots\sigma_{k}\sigma_{1}\in\mathcal{W}_{k/2+d}^{k}$ by following procedure: 
\begin{itemize}
	\item Associate with the root of the tree the letter $\sigma_{1}=[1,\ldots,d]$. 
	\item Following the exploration path of the tree clockwise, if in the $i$-th step of the exploration the explored vertex appeared before, say in the $j$-th step, define $\sigma_{i}=\sigma_{j}$. If the $i$-th explored vertex is a new vertex then define $\sigma_{i}$ to be $\sigma_{i}=\sigma_{i-1}\backslash\sigma_{i-1}^{l(\{ \sigma_{i-1},\sigma_{i}\} )}\cup\{ m\} $,
where $m=|\sigma_{1}\cup\sigma_{2}\cup\ldots\cup\sigma_{i-1}|+1$. That is, $\sigma_{i}$ is obtained from $\sigma_{i-1}$ by deleting the $\ell(\{ \sigma_{i-1},\sigma_{i}\})$-th smallest number in $\sigma_{i-1}$ (with $\ell(\{ \sigma_{i-1},\sigma_{i}\})$ being the label of the currently explored edge) and adding a new $0$-cell with the smallest number that did not appear so far. 
\end{itemize}

\begin{figure}[h]
	\begin{centering}
	\includegraphics[scale=0.7]{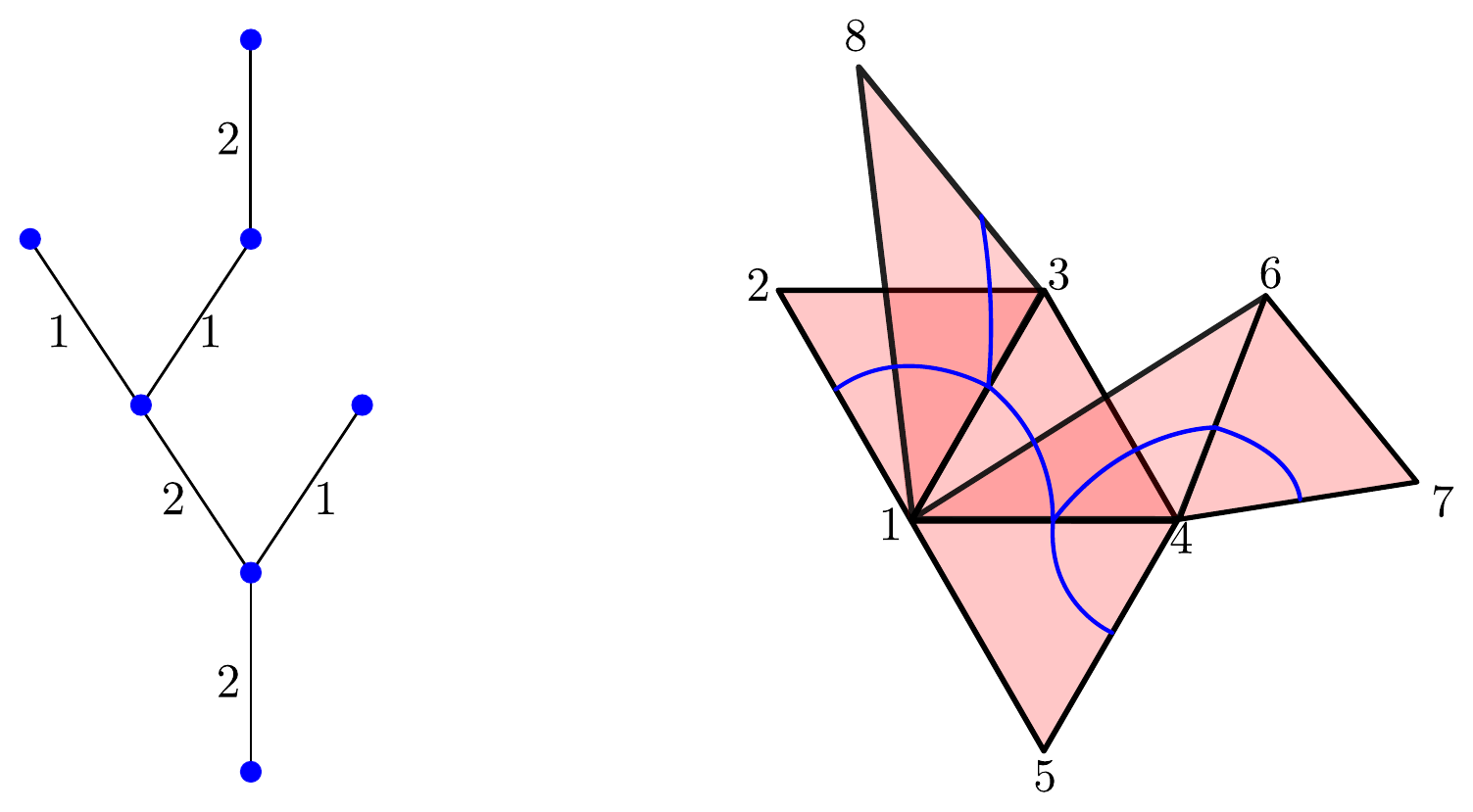}
	\par\end{centering}
	\centering{}\caption{(a) An illustration of the labeled tree associated with the word $w=[1,2][1,3][1,4][4,5][1,4][4,6][4,7][4,6][1,4][1,3][3,8][1,3][1,2]$. (b) The path of the word $w$ on the complex. 
	\label{fig:bijection_of_words_and_labeled_trees}}
\end{figure}

One can easily verify that these two maps are inverses of each other, thus completing the proof. 
\end{proof}


\section{Proof of Theorem \ref{thm:Semi_circle_law}} \label{sec:pf_thm26}

Since the eigenvalues of $(nq)^{-1/2}(A+pdI)$ and $\left(nq\right)^{-1/2}A$ are simplify shifted by $\frac{dp}{\sqrt{nq}}$, which by assumption tends to zero as $n \to \infty$, it is enough to prove the semicircle law for the matrix $(nq)^{-1/2}(A+pdI)$.

Using (\ref{eq:The_matrix_H}) and \eqref{eq:expectation_of_adj_matrix} we can write 
\begin{equation}
	\frac{1}{\sqrt{nq}}(A+pdI) 
= H+\frac{p}{\sqrt{nq}}(\mathbb{A}+dI).
	\label{eq:decomposition_of_A+dI}
\end{equation}

\begin{lem}[\cite{GW14} Lemma 8] \label{lem:eigenvalues_of_BA}
	The eigenvalues of $\mathbb{A}$ are $n-d$ with multiplicity $\binom{n-1}{d-1}$ and $-d$ with multiplicity $\binom{n-1}{d}$. 
\end{lem}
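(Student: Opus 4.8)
The plan is to diagonalise $\mathbb{A}$ via its Hodge-theoretic description. Recall from the discussion above that $\mathbb{A}=D-\Delta^{+}$, where $D$ is the degree operator on $(d-1)$-cells and $\Delta^{+}=\delta^{*}\delta$ is the upper Laplacian on $\Omega^{d-1}(K)$, with $\delta\colon\Omega^{d-1}(K)\to\Omega^{d}(K)$ the simplicial coboundary. Since $K$ is the complete $d$-complex, every $(d-1)$-cell, having $d$ vertices, is contained in exactly $n-d$ $d$-cells, so $D=(n-d)I$ and hence $\mathbb{A}=(n-d)I-\Delta^{+}$. It therefore suffices to prove that $\Delta^{+}$ has eigenvalue $n$ with multiplicity $\binom{n-1}{d}$ and eigenvalue $0$ with multiplicity $\binom{n-1}{d-1}$.

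First I would use the orthogonal splitting $\Omega^{d-1}(K)=\ker\delta\oplus\operatorname{im}\delta^{*}$; on $\ker\delta$ we have $\Delta^{+}=0$, so the only point is to show $\Delta^{+}$ acts as $nI$ on $\operatorname{im}\delta^{*}$. Introduce the lower Laplacian $\Delta^{-}:=\delta'(\delta')^{*}$ on $\Omega^{d-1}(K)$, where $\delta'\colon\Omega^{d-2}(K)\to\Omega^{d-1}(K)$ is the coboundary. Since $\delta\delta'=0$, and hence $(\delta')^{*}\delta^{*}=0$, every element of $\operatorname{im}\delta^{*}$ lies in $\ker(\delta')^{*}=(\operatorname{im}\delta')^{\perp}$, so $\Delta^{-}$ vanishes identically on $\operatorname{im}\delta^{*}$. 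On the other hand, I claim $\Delta^{+}+\Delta^{-}=nI$ on all of $\Omega^{d-1}(K)$; granting this, $\Delta^{+}=nI-\Delta^{-}=nI$ on $\operatorname{im}\delta^{*}$, which finishes this step. The claim is checked by a direct computation in the orthonormal basis $(\ind_{\sigma})$: the diagonal entry of $\Delta^{+}+\Delta^{-}$ at a $(d-1)$-cell $\sigma$ equals the number of $(d-2)$-faces of $\sigma$ plus the number of $d$-cofaces of $\sigma$ in $K$, which is $d+(n-d)=n$; for two distinct $(d-1)$-cells that share a common $(d-2)$-face, the contributions of $\Delta^{+}$ and of $\Delta^{-}$ have the same absolute value but opposite sign and thus cancel; all other entries vanish. (This uses only that $K$ contains every cell of dimension at most $d$, and the sign cancellation is the classical fact that the combinatorial Laplacian of a simplex is a scalar operator.)

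It remains to pin down the multiplicities, for which it suffices to compute $\dim\ker\delta$ (the multiplicity of the eigenvalue $0$); the multiplicity of $n$ is then $\binom{n}{d}-\dim\ker\delta$. Since $K$ is the $d$-skeleton of the $(n-1)$-simplex, its reduced cohomology vanishes in all degrees below $d$, so its augmented cochain complex $0\to\mathbb{R}\xrightarrow{\epsilon}\Omega^{0}(K)\xrightarrow{\delta_{0}}\cdots\xrightarrow{\delta_{d-2}}\Omega^{d-1}(K)\xrightarrow{\delta_{d-1}}\Omega^{d}(K)$ is exact everywhere except at $\Omega^{d}(K)$. In particular $\ker\delta_{j}=\operatorname{im}\delta_{j-1}$ for $0\le j\le d-1$, where $\delta_{-1}:=\epsilon$ has rank $1$, so $\operatorname{rank}\delta_{j}=\binom{n}{j+1}-\operatorname{rank}\delta_{j-1}$; an immediate induction using Pascal's rule $\binom{n-1}{j+1}+\binom{n-1}{j}=\binom{n}{j+1}$ gives $\operatorname{rank}\delta_{j}=\binom{n-1}{j+1}$ for $0\le j\le d-1$. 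Hence $\dim\ker\delta=\dim\ker\delta_{d-1}=\operatorname{rank}\delta_{d-2}=\binom{n-1}{d-1}$, and the multiplicity of the eigenvalue $n$ of $\Delta^{+}$ is $\binom{n}{d}-\binom{n-1}{d-1}=\binom{n-1}{d}$. Substituting into $\mathbb{A}=(n-d)I-\Delta^{+}$, we conclude that $\mathbb{A}$ has eigenvalue $n-d$ on $\ker\delta$ with multiplicity $\binom{n-1}{d-1}$ and eigenvalue $(n-d)-n=-d$ on $\operatorname{im}\delta^{*}$ with multiplicity $\binom{n-1}{d}$, which is the assertion. I expect the only genuinely delicate point to be the orientation-sign cancellation in the identity $\Delta^{+}+\Delta^{-}=nI$; everything else is Hodge theory together with elementary dimension counting.
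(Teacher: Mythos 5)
The paper does not prove this lemma itself; it cites it as~\cite[Lemma~8]{GW14}, so there is no in-paper proof to compare against. (The paper only uses the consequence that $P=\tfrac1n(\mathbb{A}+dI)$ is an orthogonal projection of rank $\binom{n-1}{d-1}$, which is introduced in Section~\ref{sec:location_of_eigenvalues}.)

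Your argument is correct and is essentially the standard Hodge-theoretic derivation. The three ingredients all check out: (i) $D=(n-d)I$ on the complete $d$-complex, since each $(d-1)$-cell has exactly $n-d$ cofaces; (ii) the identity $\Delta^{+}+\Delta^{-}=nI$ on $\Omega^{d-1}(K)$, together with $\Delta^{-}|_{\operatorname{im}\delta^{*}}=0$ and $\Delta^{+}|_{\ker\delta}=0$, pins down the two eigenvalues of $\mathbb{A}=(n-d)I-\Delta^{+}$ as $n-d$ and $-d$; and (iii) the rank computation $\dim\ker\delta_{d-1}=\binom{n-1}{d-1}$ via exactness of the augmented cochain complex of the $d$-skeleton of the $(n-1)$-simplex (using that reduced cohomology vanishes below degree $d$) and Pascal's rule. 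One small point worth spelling out if you write this up: for $d\ge 2$ the lower Laplacian $\Delta^{-}=\delta_{d-2}\delta_{d-2}^{*}$ requires no augmentation, but your rank induction does start from the augmentation map $\epsilon$; both are fine as stated, just make sure the reader sees that the identity $\Delta^{+}+\Delta^{-}=nI$ is being used in degree $d-1\ge 1$ while the exactness/rank argument passes through degree $0$ via $\epsilon$. An equivalent route, closer to how the paper uses the lemma, is to verify directly that $(\mathbb{A}+dI)^{2}=n(\mathbb{A}+dI)$ (which is the same sign-cancellation computation you do) and then read off the spectrum and multiplicity from the rank of the resulting projection; your cohomological count of $\dim\ker\delta$ supplies that rank either way.
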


It follows that the matrix $\mathbb{A}+dI$ has rank $\binom{n-1}{d-1}$ and therefore by Weyl's interlacing inequalities we have 
\begin{equation}
	\lambda_{i}(H)\leq\lambda_{i}\left(\frac{1}{\sqrt{nq}}(A+pdI)\right)\leq\lambda_{i+\binom{n-1}{d-1}}(H),\quad\forall i\in\mathbb{Z},\label{eq:weyls_inequality_application}
\end{equation}
where for a self-adjoint, $N\times N$ matrix $B$ we define $\lambda_{i}(B)=-\infty$
for $i<1$ and $\lambda_{i}(B)=+\infty$ for $i>N$. 

For an $N\times N$ matrix $B$ denote by $\kappa_{B}(x) = L_B((-\infty,x])$ the cumulative distribution function of its eigenvalues. Let $f$ be a smooth bounded function with a bounded derivative. Then, using integration by parts we get
\begin{multline}\label{eq:integration_by_parts}
\int_{\mathbb{R}}f(E) \,  \mathrm{d} \kappa_{(nq)^{-1/2}(A+pdI)}(E)  -\int_{\mathbb{R}}f(E) \,  \mathrm{d} \kappa_{H}(E) 
\\
= \int f'(E)\left(\kappa_{H}(E)-\kappa_{(nq)^{-1/2}(A+pdI)}(E)\right)\mathrm{d}E.
\end{multline}
Due to (\ref{eq:weyls_inequality_application}) the right hand side is bounded by 
\[
\frac{\Vert f'\Vert _{\infty}}{N}\,\binom{n-1}{d-1}=O\left(\frac{\Vert f'\Vert _{\infty}}{n}\right),
\]
and thus \eqref{eq:integration_by_parts} goes to zero as $n\to\infty$.

It now follows by an approximation argument that
\begin{equation}
\lim_{n\to\infty}\left|\int_{\mathbb{R}}f(E) \, \mathrm{d}\kappa_{(nq)^{-1/2}(A+pdI)}(E)-\int_{\mathbb{R}}f(E) \, \mathrm{d}\kappa_{H}(E)\right|=0
\label{eq:proof_of_semicircle_for_shifted_A}
\end{equation}
for every bounded continuous function $f$. Therefore, by Theorem \ref{thm:Semicircle_for_H}
\[
\lim_{n\to\infty}L_{(nq)^{-1/2}(A+pdI)}=\lim_{n\to\infty}L_{H}=\nu_{d}
\]
almost surely (where the limits are in distribution), and Theorem \ref{thm:Semi_circle_law} follows.\hfill $\square$


\section{Bounding the norm of $H$ \label{sec:Bounding-the-norm-of-H}}

In this section we prove the following bound on the norm of the matrix $H$ defined in \eqref{eq:The_matrix_H}. 
\begin{thm}\label{thm:norm_bound}
Suppose that $nq \geq 2$. Then for every $\xi>0$ we have
	\[
		\mathbb{P}\bigl(\Vert H\Vert >2\sqrt{d}+\xi\bigr)\leq \mathcal{E}(\xi),
	\]
	where we introduced the error probability
	\begin{equation}\label{eq:norm_of_H_prob}
		\mathcal{E}(\xi) \equiv \mathcal{E}_{n,p,d}(\xi) := \frac{2\left(1+\frac{\xi}{2\sqrt{d}}\right)^{2}}{(d-1)!}\,\exp\left(d\log n-\left(\frac{2}{3}\log\left(1+\frac{\xi}{2\sqrt{d}}\right)\right)^{3/2}\left(\frac{nq}{d}\right)^{1/4}\right).
	\end{equation}
\end{thm}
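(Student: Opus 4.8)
The plan is to bound $\|H\|$ via the moment method, estimating $\mathbb{E}[\mathrm{Tr}\, H^k]$ for $k$ an even integer that grows with $n$, and then applying Markov's inequality. Since $\|H\|^k \leq \mathrm{Tr}\, H^k$ for even $k$, we have $\mathbb{P}(\|H\| > 2\sqrt{d}+\xi) \leq (2\sqrt{d}+\xi)^{-k}\, \mathbb{E}[\mathrm{Tr}\, H^k]$, so it suffices to control the trace moment and then optimize over $k$. Exactly as in Section 3, expanding the trace gives
\[
	\mathbb{E}[\mathrm{Tr}\, H^k] = \frac{1}{(nq)^{k/2}} \sum_{w} |[w]| \, \Biggl| \prod_{\tau \in X^d} \mathbb{E}\bigl[(\chi-p)^{N_w(\tau)}\bigr] \Biggr|,
\]
where the sum is over representatives of equivalence classes of closed words of length $k+1$ with $N_w(\tau) \neq 1$ for every $\tau$, and I have used $|\mathrm{sgn}(w,\tau)| = 1$ and $|[w]| \leq n^{s}$ with $s = |\mathrm{supp}_0(w)|$, together with the bound $\prod_\tau \mathbb{E}[(\chi-p)^{N_w(\tau)}] \leq q^{s-d}$ from \eqref{eq:semicircle_H_8}. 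Grouping words by $s = |\mathrm{supp}_0(w)|$, which ranges from $d$ to $\lfloor k/2\rfloor + d$, and writing $\mathcal{W}_s^k$ for the corresponding set of representatives, this yields
\[
	\mathbb{E}[\mathrm{Tr}\, H^k] \leq \sum_{s=d}^{\lfloor k/2 \rfloor + d} \frac{n^s q^{s-d}}{(nq)^{k/2}} \, |\mathcal{W}_s^k| = \frac{1}{q^d}\sum_{s=d}^{\lfloor k/2 \rfloor + d} (nq)^{s - k/2} \, |\mathcal{W}_s^k|.
\]
Since $nq \geq 2 > 1$, the dominant term is $s = \lfloor k/2 \rfloor + d$ (for even $k$, $s = k/2 + d$), contributing a factor $(nq)^{d}$; the lower-order terms are suppressed by powers of $(nq)^{-1}$ and are handled by a crude geometric-series bound. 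So the crux is a sharp estimate of $|\mathcal{W}_{k/2+d}^k|$, and more precisely its growth as $k$ grows with $n$.

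The main obstacle — and the one point where this differs from Section 3, where $k$ was fixed — is that I need $|\mathcal{W}_{k/2+d}^k| \leq \mathcal{C}_{k/2}\, d^{k/2}$ (or at worst something like $C^k d^{k/2}$ for an absolute constant $C$) \emph{uniformly} in $k$, not just asymptotically. Lemma \ref{lem:Size_of_W_k/2_k/2+d} already gives the exact identity $|\mathcal{W}_{k/2+d}^k| = \mathcal{C}_{k/2}\, d^{k/2}$ for even $k$, and its proof (the tree structure plus the bijection with labeled rooted planar trees) is valid for all even $k$, so I can invoke it directly. Using $\mathcal{C}_{k/2} \leq 4^{k/2} = 2^k$ I obtain, for even $k$,
\[
	\mathbb{E}[\mathrm{Tr}\, H^k] \leq \frac{1}{q^d}\sum_{s=d}^{k/2+d} (nq)^{s-k/2}\, \mathcal{C}_{s-d}\, d^{s-d} \leq \frac{(nq)^d}{q^d} \cdot \mathcal{C}_{k/2}\, d^{k/2} \cdot \frac{1}{1 - \frac{1}{nq}} \cdot (1 + o(1)) \leq C \, n^d\, \mathcal{C}_{k/2}\, d^{k/2},
\]
where I have bounded each lower-order term $s = k/2 + d - j$ by $\mathcal{C}_{k/2-j} d^{-j} (nq)^{-j} \leq \mathcal{C}_{k/2} d^{-j}(nq)^{-j}$ (using monotonicity $\mathcal{C}_{k/2-j} \leq \mathcal{C}_{k/2}$) and summed the geometric series in $(nq)^{-1} \leq 1/2$. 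This will need to be done carefully enough that the prefactor is genuinely of the form $\frac{2}{(d-1)!}n^d(1+\xi/(2\sqrt d))^2$ or can be absorbed into it; tracking the combinatorial constant $B_{n,s,d} = n(n-1)\cdots(n-s)/d! \leq n^{s+1}/d!$ from Claim \ref{claim:B_n_s_d} more precisely (rather than $n^s$) accounts for the $(d-1)!$ in the denominator and one of the powers of $n$.

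Finally I optimize the Markov bound. Writing $k = 2\ell$, I have
\[
	\mathbb{P}(\|H\| > 2\sqrt d + \xi) \leq (2\sqrt d + \xi)^{-2\ell} \cdot C\, n^d\, \mathcal{C}_\ell\, d^\ell \leq C\, n^d \left(\frac{d}{(2\sqrt d+\xi)^2}\right)^\ell \mathcal{C}_\ell \leq C\, n^d \left(1 + \tfrac{\xi}{2\sqrt d}\right)^{-2\ell} 4^{-\ell}\mathcal{C}_\ell \cdot 4^\ell,
\]
and since $\mathcal{C}_\ell \leq 4^\ell$ this is $\leq C n^d (1+\xi/(2\sqrt d))^{-2\ell}$ up to polynomial-in-$\ell$ corrections (more carefully, $\mathcal{C}_\ell \leq 4^\ell/\ell^{3/2}$ up to constants, which I keep since it produces the $(1+\xi/(2\sqrt d))^2$ prefactor after choosing $\ell$). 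Setting $\beta := \log(1 + \xi/(2\sqrt d)) > 0$, the bound is roughly $C n^d e^{-2\beta \ell} \ell^{-3/2}$; I choose $\ell$ as large as the crude moment bound allows while the error terms stay controlled, namely of order $\ell \asymp (nq/d)^{1/4}$ (this is the standard Füredi–Komlós choice balancing the lower-order word contributions, which I silently absorbed above, against the main term — it is the threshold beyond which the inequality $\mathcal{C}_{k/2-j}(nq)^{-j} \leq \mathcal{C}_{k/2}$ and the neglected combinatorial overcounting in $|\mathcal{W}_s^k|$ for $s < k/2+d$ start to cost too much). Substituting $\ell = (\tfrac{2}{3}\beta)^{1/2}(nq/d)^{1/4}$ (up to rounding to an even integer, which only changes constants), the exponent becomes $d\log n - (\tfrac{2}{3}\beta)^{3/2}(nq/d)^{1/4}$, matching \eqref{eq:norm_of_H_prob}; the remaining polynomial and constant factors collapse into $\frac{2(1+\xi/(2\sqrt d))^2}{(d-1)!}$. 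The one genuinely delicate bookkeeping step is re-examining the lower-order ($s < k/2 + d$) contributions with $k$ now growing: here I must bound $|\mathcal{W}_s^k|$ not by the trivial $k^{dk}$ of Claim \ref{claim:bound_on_W_t_s} (which would be fatal for $k \to \infty$) but by a refinement of the tree argument showing $|\mathcal{W}_s^k| \leq C^k$ for some absolute $C$, with the deficit $k/2 + d - s$ in the exponent of $(nq)$ giving the decay — and this is precisely the place where the choice $\ell \asymp (nq/d)^{1/4}$ gets forced, since larger $\ell$ would let these terms overwhelm the main term.
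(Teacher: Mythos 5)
Your overall strategy—bounding $\mathbb{E}[\mathrm{Tr}\,H^k]$ by a sum over equivalence classes of closed words, taking $k$ growing with $n$, and optimizing via Markov—is indeed the paper's strategy. You also correctly identify the crux: a uniform-in-$k$ bound on $|\mathcal{W}_s^k|$ for the sub-leading values $s < k/2 + d$. But that is exactly the step you do not carry out, and the bound you propose as a placeholder is wrong.

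You write that one should establish ``$|\mathcal{W}_s^k| \le C^k$ for some absolute $C$.'' This cannot hold uniformly over $s$. If it did, the lower-order terms would be bounded by a convergent geometric series in $(nq)^{-1}$ \emph{for every} $k$, with no $k$-dependent penalty, and you could then take $k$ arbitrarily large, producing a bound far stronger than \eqref{eq:norm_of_H_prob}. In fact the correct count is Proposition~\ref{prop:Main_lemma_for_norm_bound_on_H},
\[
|\mathcal{W}_{s}^{k}|\leq d (2\sqrt{d})^{k}\sum_{m=0}^{k-2(s-d)}\frac{\bigl(\tfrac{\sqrt{d}}{2}k^{3}\bigr)^{m}}{m!},
\]
whose dominant term for small $s$ grows like $\exp\bigl(\tfrac{\sqrt{d}}{2}k^3\bigr)$—super-exponential in $k$. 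It is precisely the factor $k^{3m}$ with $m = k - 2(s-d)$, weighed against the decay $(nq)^{-(k/2 + d - s)}$, that forces the choice $k \asymp (nq)^{1/4}$. Your argument acknowledges that some ``refinement of the tree argument'' is needed and gestures at the right scaling, but substitutes a false bound and then relies on that scaling without deriving it; the exponent $\exp\bigl(\sqrt{d/(4nq)}\,k^3\bigr)$ in the paper's final moment estimate is not present in your calculation, and without it the Markov step does not produce \eqref{eq:norm_of_H_prob}.

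The missing content is the F\"uredi--Koml\'os parsing argument: parsing each $w \in \mathcal{W}_s^k$ into an FK sentence (Definition~\ref{def:FK-words and FK-sentences}), controlling the number of words in the parsing via Lemma~\ref{lem:num_of_words_in_FK_parsing} (this is where the bound $m_w \le k+1-2(s-d)$ comes from, and it is the genuinely nontrivial combinatorial step for $d>1$, requiring a nonlocal argument since one cannot read off the deficit edge-by-edge as in the graph case), counting FK words via the Wigner-word decomposition (Lemma~\ref{lem:FK_lemma_1}), and counting gluings via the geodesic argument (Lemma~\ref{lem:How_to_concatenate_a_new_FK_sequence}). Invoking Lemma~\ref{lem:Size_of_W_k/2_k/2+d}(3) only covers the top value $s = k/2 + d$; the lower $s$ are where all the work is, and your proposal leaves them unaddressed.
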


\begin{rem} \label{rem:conv_eig}
	In particular, if $\lim_{n\to\infty}\frac{nq}{\log^{4}n}=\infty$ it follows that from the Borel-Cantelli lemma that for every $\varepsilon>0$ we have almost surely $\limsup_{n\to\infty}\Vert H\Vert <2\sqrt{d}+\varepsilon$. Since $\varepsilon$ was arbitrary we conclude that $\limsup_{n\to\infty}\Vert H\Vert \leq2\sqrt{d}$ almost surely. Observing that the semicircle law (Theorem \ref{thm:Semicircle_for_H}) implies that almost surely $\liminf_{n\to\infty}\Vert H\Vert \geq2\sqrt{d}$ we obtain that almost surely $\lim_{n\to\infty}\Vert H\Vert =2\sqrt{d}$.
\end{rem}

The proof of Theorem \ref{thm:norm_bound} uses a F\"uredi-Koml\'os-type argument; see \cite{FK81,AGZ10} for a presentation of the classical F\"uredi-Koml\'os argument for matrices with independent entries. The main work is to estimate the number of equivalence classes in $\mathcal{W}_{s}^{k}$. This estimate is given in the following result. 
\begin{prop}
\label{prop:Main_lemma_for_norm_bound_on_H} For every $k\geq0$ and $d\leq s\leq\lfloor \frac{k}{2}\rfloor +d$ 
\[
	|\mathcal{W}_{s}^{k}|\leq d (2\sqrt{d})^{k}\sum_{m=0}^{k-2(s-d)}\frac{\bigl(\frac{\sqrt{d}}{2}k^{3}\bigr)^{m}}{m!}.
\]

\end{prop}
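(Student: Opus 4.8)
The plan is to bound $|\mathcal{W}_{s}^{k}|$ by counting \emph{labeled} rooted objects that encode each equivalence class, generalizing the Füredi--Komlós bookkeeping from the $d=1$ case. The key structural observation is that for a word $w \in \mathcal{W}_{s}^{k}$ with $N_w(\tau)\neq 1$ for all $d$-cells $\tau$, the total number of crossings is $k = \sum_{\tau} N_w(\tau) \geq 2|\mathrm{supp}_d(w)|$, and combined with Claim \ref{claim:number_of_0_and_d_cells} (i.e.\ $|\mathrm{supp}_0(w)| \leq |\mathrm{supp}_d(w)|+d$) we get $|\mathrm{supp}_d(w)| \geq s-d$, hence at least $s-d$ of the $d$-cells are crossed exactly twice and these ``tree edges'' are responsible for introducing all $s-d$ new $0$-cells beyond the initial $(d-1)$-cell. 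The remaining $m := k - 2(s-d) - (\text{excess over the minimum})$ crossings --- more precisely, one should set things up so that $m$ ranges over $0 \leq m \leq k-2(s-d)$ --- are ``surplus'' crossings that do not create new vertices. First I would classify each crossing time $1\le j\le k$ as either a \emph{tree crossing} (a first crossing of a $d$-cell that adds a new $0$-cell), a \emph{return crossing}, or other, mimicking \cite[Section 2.1]{AGZ10} but tracking $d$-cells rather than edges.

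Next I would reconstruct $w$ from a bounded amount of combinatorial data. As in the semicircle proof, a word in $\mathcal{W}_{k/2+d}^{k}$ (the extremal case) is encoded by a Dyck path of length $k$ together with a labeling in $\llbracket d\rrbracket$ of each of its $k/2$ edges, recording which of the $d$ smallest $0$-cells of $\sigma_i$ is deleted when passing to $\sigma_{i+1}$; this gives the factor $\mathcal{C}_{k/2}d^{k/2}$. For general $s$ the skeleton of tree crossings still forms a rooted planar tree with $s-d$ edges, contributing at most $\mathcal{C}_{s-d} \le 4^{s-d} \le (2\sqrt d)^{2(s-d)}$ shapes and at most $d^{s-d}$ labels. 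The surplus crossings must be inserted into this exploration: each surplus crossing is specified by choosing its position among the at most $k$ steps, choosing which already-seen vertex or $d$-cell it involves (at most $k$ choices, using that all relevant cells have already appeared), and choosing a label in $\llbracket d\rrbracket$ for how the $(d-1)$-cell changes. This yields roughly $(\sqrt d\, k^2/2 \cdot \text{something})^m$ per surplus crossing, and after summing over the number $m$ of surplus crossings one arrives at $d(2\sqrt d)^{k}\sum_{m=0}^{k-2(s-d)} (\tfrac{\sqrt d}{2}k^3)^m/m!$, where the $1/m!$ comes from the fact that the $m$ surplus crossings are naturally ordered along the path so an unordered count overcounts by $m!$, and the leading $d$ absorbs the choice of orientation/root conventions.

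The technical heart is proving that this reconstruction is genuinely injective (or at most boundedly-to-one) up to equivalence --- that is, that the tree shape, the edge labels, and the list of (position, target, label) triples for surplus crossings really do determine $w$ up to a permutation of $V$. This requires an argument in the spirit of the case analysis in the proof of Lemma \ref{lem:Size_of_W_k/2_k/2+d}: one must check that no surplus crossing can secretly create a new $0$-cell (forcing $|\mathrm{supp}_0| > s$, a contradiction), that the planar tree structure of the tree crossings is unambiguous, and that, given all the data, the ``delete the $\ell$-th smallest, add the next available label'' rule reconstructs each $\sigma_{i+1}$ from $\sigma_i$ deterministically. I expect this injectivity/overcounting bookkeeping --- getting the combinatorial factors to come out as exactly $\tfrac{\sqrt d}{2}k^3$ and $1/m!$ rather than something weaker --- to be the main obstacle; the rest is a careful but routine adaptation of the classical Füredi--Komlós counting and of the bijection already established for $\mathcal{W}_{k/2+d}^{k}$.
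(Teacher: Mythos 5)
Your overall plan is a sound outline of a Füredi--Komlós-type count, and it correctly identifies the target quantities (roughly $(2\sqrt d)^k$ for the ``tree'' part, a $\sqrt d\, k^3$-type factor per surplus crossing, and $1/m!$ from ordering). Indeed the paper's proof is in the same spirit: it parses each $w\in\mathcal W_s^k$ into an ``FK sentence'' (a sequence of tree-shaped FK words), bounds the number of FK words of given lengths by $\bigl(\tfrac{\sqrt d}{2}\bigr)^m(2\sqrt d)^{k+1}$ (Lemma~\ref{lem:FK_lemma_1}, via the Wigner-word generating function), bounds the number of ways to glue them by $k^{2(m-1)}$ (Lemma~\ref{lem:How_to_concatenate_a_new_FK_sequence}, via uniqueness of geodesics in a forest), and then sums over $m$.

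However, there is a genuine gap in the central quantitative step, namely the upper limit $m\le k-2(s-d)$ in the sum. You justify it by asserting that ``at least $s-d$ of the $d$-cells are crossed exactly twice and these tree edges are responsible for introducing all $s-d$ new $0$-cells,'' and treat the remaining crossings as surplus. But knowing $|\mathrm{supp}_d(w)|\ge s-d$ and $N_w(\tau)\ge 2$ does not by itself yield $2(s-d)$ crossings that are ``non-surplus'' in your encoding (equivalently, $2(s-d)$ crossings that survive the parsing). The difficulty, which is precisely what the paper flags as the new obstacle for $d>1$, is that for a word $w$ with dependent entries the second crossing of the $d$-cell discovered at step $i(j)$ may very well be an old edge or a loop-creating step, in which case it \emph{is} a break point of the parsing and does not count. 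The paper's Lemma~\ref{lem:num_of_words_in_FK_parsing} fixes this by exhibiting, for each first-discovery crossing time $i(j)$, a specific companion time $r(j)$ which is guaranteed to be a genuine crossing of $a_w$ (either the second crossing $k(j)$ of the same $d$-cell, or, if that is an old edge, the first crossing $l(j)$ of a third $(d-1)$-cell in that $d$-cell's boundary), and then proves by a four-way case analysis that all $2(s-d)$ times $\{i(j)\}\cup\{r(j)\}$ are distinct. Without that argument the sum only runs up to $k-(s-d)$, and one then loses the $(nq)^{-m/2}$ damping that is needed for the exponent optimization in the proof of Theorem~\ref{thm:norm_bound}. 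So the bound $m\le k-2(s-d)$ is not a bookkeeping detail that can be deferred; it is the heart of the proposition and requires an argument you have not supplied.

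A secondary gap: you never establish the analogue of the paper's FK-word/FK-sentence structure (``for each $d$-cell at most one edge crossed twice, and the graph is a tree''), which is what makes the decomposition into disjoint Wigner words unique and lets Lemma~\ref{lem:Size_of_W_k/2_k/2+d}(3) be invoked to get the exact $(2\sqrt d)^k$-type count. Your proposed encoding by ``(position, target, label)'' triples for surplus crossings would also need a precise injectivity proof; you acknowledge this, but in the paper that injectivity is what the geodesic-uniqueness argument in Lemma~\ref{lem:How_to_concatenate_a_new_FK_sequence} delivers, and it relies on the FK-sentence structure being a tree.
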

Equipped with Proposition \ref{prop:Main_lemma_for_norm_bound_on_H},
whose proof is postponed, the proof of Theorem
\ref{thm:norm_bound} is standard.
\begin{proof}[Proof of Theorem \ref{thm:norm_bound}]
 Using (\ref{eq:semicircle_H_6}) we can write for even $k$  
\begin{align*}
	\mathbb{E}\left[\int_{\mathbb{R}}x^{k}L_{H}(\mathrm{d}x)\right] 
	& =\sum_{s=d}^{\frac{k}{2}+d}\frac{B_{n,s,d}}{N(nq)^{k/2}}\sum_{w\in\mathcal{W}_{s}^{k}}	\prod_{\tau\in X^{d}}\mathbb{E}\left[(\chi-p)^{N_{w}(\tau)}\right]\mathrm{sgn}(w,\tau)\\
	 & \overset{(1)}{\leq}\sum_{s=d}^{\frac{k}{2}+d}\frac{n^{s-d}|\mathcal{W}_{s}^{k}|}{(nq)^{k/2}}\sup_{w\in\mathcal{W}_{s}^{k}}\left|\prod_{\tau\in X^{d}}\mathbb{E}\left[(\chi-	p)^{N_{w}(\tau)}\right]\right|\\
	 & \overset{(2)}{\leq}\sum_{s=d}^{\frac{k}{2}+d}\frac{|\mathcal{W}_{s}^{k}|}{(nq)^{k/2+d-s}}\\
	 & \overset{(3)}{\leq}d(2\sqrt{d})^{k}\sum_{m=0}^{k}\frac{\bigl(\frac{\sqrt{d}}{2}k^{3}\bigr)^{m}}{(nq)^{k/2}m!}\frac{(nq)^{\frac{k-m}{2}+1}-1}{nq-1}\\
	 & \leq d(2\sqrt{d})^{k}\frac{nq}{nq-1}\sum_{m=0}^{k}\frac{1}{m!}\left(\sqrt{\frac{d}{4nq}}k^{3}\right)^{m}\\
	 & \overset{(4)}{\leq}2d(2\sqrt{d})^{k}\exp\left(\sqrt{\frac{d}{4nq}}k^{3}\right),
\end{align*}
where for $(1)$ we used the fact that $N^{-1}B_{n,s,d} \leq n^{s-d}$, for $(2)$ we used the fact that the supremum is bounded by $q^{s-d}$ (see (\ref{eq:semicircle_H_8})), for $(3)$
we used Proposition \ref{prop:Main_lemma_for_norm_bound_on_H}, and for $(4)$ we used
the assumption $nq\geq2$.

Therefore, it follows from the Markov's inequality that for every $\xi>0$ and $k$ even 
\begin{align*}
	& \mathbb{P}\left(\Vert H\Vert >2\sqrt{d}+\xi\right)\\
& \leq  \mathbb{P}\left(\sum_{i=1}^{N}(\lambda_{i}(H))^{k}>(2\sqrt{d}+\xi)^{k}\right)=\mathbb{P}\left(\int_{\mathbb{R}}x^{k}L_{H}(\mathrm{d}x)>\frac{(2\sqrt{d}+\xi)^{k}}{N}\right)\\
&\leq  \frac{N\,\mathbb{E}\left[\int_{\mathbb{R}}x^{k}L_{H}(\mathrm{d}x)\right]}{(2\sqrt{d}+\xi)^{k}}\leq\frac{2n^{d}}{(d-1)!}\left(\frac{2\sqrt{d}}{2\sqrt{d}+\xi}\right)^{k}\exp\left(\sqrt{\frac{d}{4nq}}k^{3}\right)\\
&\leq \frac{2}{(d-1)!}\exp\left(d\log n+\left[\sqrt{\frac{d}{4nq}}k^{2}-\log\left(1+\frac{\xi}{2\sqrt{d}}\right)\right]k\right).
\end{align*}
The claim then follows by choosing $k=k(n)$ to be the largest even integer that is smaller than $\left(\frac{2}{3\sqrt{d}}\log\left(1+\frac{\xi}{2\sqrt{d}}\right)\right)^{1/2}(nq)^{1/4}$.
\end{proof}

The rest of this section is devoted to the proof of Proposition \ref{prop:Main_lemma_for_norm_bound_on_H}. We start with the following definitions, which provide the correct generalization of the classical definition of FK words \cite{AGZ10} to the case $d>1$. 

\begin{defn}[Sentences]\label{def:(n,d)-sentence} 
	An $(n,d)$-sentence (or shortly a \emph{sentence}) is a finite sequence of words $\left(w_{1},w_{2},\ldots,w_{m}\right)$ at least one word long. The \emph{length of a sentence} $a$ is the sum of the lengths of its words. Note that unlike for words, in general the length is not the same as the number of crossings minus one. 
\end{defn}

\begin{defn}[Support of a sentence]
	For a sentence $a=(w_{1},\ldots,w_{m})$ we denote by $\mathrm{supp}_{0}(a)$ and $\mathrm{supp}_{d}(a)$ the union of $\mathrm{supp}_{0}(w_{i})$ respectively $\mathrm{supp}_{d}(w_{i})$ over $1\leq i\leq m$. 
\end{defn}

\begin{defn}[Graph of a sentence]\label{def:graph_of_a_sentence}
	Given a sentence $a=(w_{1},\ldots,w_{m})$ with $w_{i}=\sigma_{i,1}\sigma_{i,2}\ldots\sigma_{i,\ell_{i}}$ we define the graph associated with it $G_{a}=(V_{a},E_{a})$ by 
\[
	V_{a}=\{ \sigma_{i,j}\,:\,1\leq i\leq m,\,\,1\leq k\leq\ell_{i}\} 
\]
and 
\[
	E_{a}=\{ \{ \sigma_{i,j},\sigma_{i,j+1}\} \,:\,1\leq i\leq m,\,\,1\leq j\leq\ell_{i}-1\} .
\]

As in the word case the sentence $a$ induces a sequence of paths on the graph $G_{a}$ which together cover all of its vertices and edges. Also, the paths induces an ordering on the $(d-1)$-cells, $0$-cells and $d$-cells associated with the graph by following the words according to their order in the sentence and following the usual order inside each word. As in the word case we define $N_{a}(e)=\sum_{i=1}^{m}N_{w_{i}}(e)$ to be the number of times the edge $e\in E_{a}$ is crossed in the sentence $a$, let $E_{a}(\tau)=\{ \{ \sigma,\sigma'\} \in E_{a}\,:\,\sigma\cup\sigma'=\tau\}$ be the set of edges associated with the $d$-cell $\tau$ and define $N_{a}(\tau)=\sum_{e\in E_{a}(\tau)}N_{a}(e)$ to be the total number of crossings of $\tau$. 
\end{defn}

\begin{defn}[Wigner words]
	A closed word $w$ of length $k+1\geq1$ is called a \emph{Wigner word} if either $k=0$ or $k>0$ is even and $w$ is equivalent to an element of $\mathcal{W}_{k/2+d}^{k}$.
\end{defn}

\begin{defn}[FK words]\label{def:FK-words and FK-sentences} 
	A word $w$ is called an \emph{FK word} if the graph $G_{w}$ associated with $w$ is a tree, $N_{w}(e)\leq 2$ for every $e\in E_{w}$, and for each $\tau\in X^{d}$ there is a most one edge $e\in E_{w}(\tau)$ such that $N_{w}(e)=2$. More generally, a sentence $a=(w_{1},\ldots,w_{m})$ is called an \emph{FK sentence} if the graph $G_{a}$ is a tree, $N_{a}(e)\leq 2$ for every $e\in E_{a}$, for each $\tau\in X^{d}$ there is at most one edge $e\in E_{a}(\tau)$ such that $N_{a}(e)=2$, and for every $1\leq i\leq m-1$ the first letter of $w_{i+1}$ belongs to one of the words $w_{1},\ldots,w_{i}$. Two FK sentences $a=(w_{1},\ldots,w_{m})$ and $a'=(w_{1}',\ldots,w_{m'}')$ are called \emph{equivalent} if $m = m'$ and there exists a permutation $\pi$ on $X^0 = V$ such that $\pi(w_i) = \pi(w_i')$ for $1 \leq i \leq m$ (see Definition \ref{def:letter_words_and_equivalence} for the meaning of $\pi(w_i)$).
\end{defn}

\begin{defn}[Word parsing]
	A \emph{parsing} of a word $w=\sigma_{1}\ldots\sigma_{k}$ is a sentence $a=(w_{1},\ldots,w_{m})$ such that the concatenation of its words yields $w$. We say that $w=\sigma_{1}\ldots\sigma_{k}$ is parsed at time $i$ (or parsed in $\sigma_{i}\sigma_{i+1}$) in the parsing $a$ if $\sigma_{i}$ and $\sigma_{i+1}$ do not belong to the same word in $a$. 
\end{defn}

The key definition above is that of the FK word, which is, as it turns out, the correct generalization of the definition in \cite[Section 2.1.6]{AGZ10} to $d>1$. 

Before starting the formal proof of the lemma let us explain its general scheme; see also \cite{AGZ10} for more explanations on the structure of the F\"uredi-Koml\'os argument for matrices with independent entries. The proof starts by showing that each word $w\in\mathcal{W}_{s}^{k}$ can be parsed into an FK sentence. Since the original word can be read from its parsing simply by concatenating the words, it follows that it is enough to bound the number of such FK sentences. This is obtained via the following three steps. (a) Bounding the number of words in the FK sentence. (b) Bounding the number of ways to choose the equivalence classes of the FK words for the FK sentence. (c) Bounding the number of ways one can ``glue'' the chosen FK words together in order to obtain an FK sentence. The first bound, (a), which is obtained for $d=1$ by a simple observation (see \cite{AGZ10}), requires a new argument for $d > 1$, which is similar in spirit to the one used to prove Lemma \ref{lem:Size_of_W_k/2_k/2+d}(2).
The second bound, (b), is obtained by showing that each FK word can be parsed to a sentence which is comprised of disjoint Wigner words and then applying Lemma \ref{lem:Size_of_W_k/2_k/2+d}(3) to bound their number. Finally, the bound (c) on the number of ways to ``glue'' the FK words together is obtained by showing that a ``good gluing'' implies the existence of joint geodesic (see the proof of Lemma \ref{lem:How_to_concatenate_a_new_FK_sequence}).

\begin{proof}[Proof of Proposition \ref{prop:Main_lemma_for_norm_bound_on_H} assuming
three lemmas]
  Given a closed word $w$ of length $k+1$ we define its FK parsing $a_{w}$ as follows: Declare an edge $e$ of the associated graph $G_{w}$ \emph{new }if for some index $1\leq i\leq k$ we have $e=\{ \sigma_{i},\sigma_{i+1}\} $ and $\sigma_{i+1}\notin\{ \sigma_{1},\ldots,\sigma_{i}\} $. If an edge $e$ is not \emph{new}, then it is \emph{old }(note that old edges are the ones that create a loop in $G_{w}$ in their first crossing)\emph{. }Define $a_{w}$ to be the parsing obtained from $w$ by parsing in $\sigma_{i}\sigma_{i+1}$ if one of the following occurs:
\begin{itemize}
	\item $\{ \sigma_{i},\sigma_{i+1}\} $ is an old edge of $G_{w}$. 
	\item $\{ \sigma_{i},\sigma_{i+1}\} $ is a third or subsequent crossing of the edge $\{ \sigma_{i},\sigma_{i+1}\} $. 
	\item $\{ \sigma_{i},\sigma_{i+1}\} $ is a second crossing of the edge $\{ \sigma_{i},\sigma_{i+1}\} $ and there exists an edge $\{ \sigma,\sigma'\} \in E_{w}(\sigma_i\cup\sigma_{i+1})$ which is crossed twice in $\sigma_{1}\sigma_{2}\ldots\sigma_{i}$.
\end{itemize}

Since this parsing eliminates all loops in $G_{w}$, third and subsequent visits to edges and leaves in each $d$-cell at most one edge which is crossed twice the resulting sentence $a_{w}$ is indeed an FK sentence.

As indicated before, every word $w$ can be recovered from its FK sentence $a_{w}$. Therefore it is enough to bound the number of such FK sentences. The following three lemmas contain the required bounds for this purpose. 

\begin{lem}\label{lem:num_of_words_in_FK_parsing}
	For every $w\in\mathcal{W}_{s}^{k}$ the number of words $m_w$ in the FK sentence $a_{w}$ satisfies
	\[
		1\leq m_{w}\leq k+1-2(s-d).
	\]
\end{lem}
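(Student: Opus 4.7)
The lower bound $m_w \geq 1$ is immediate, as $a_w$ is a nonempty sentence by construction. For the upper bound $m_w \leq k + 1 - 2(s-d)$, note that each parsing eliminates one of the $k$ crossings of $w$ from ``within-word'' status, so the number of parsings equals $m_w - 1$, and the bound is equivalent to showing that the number of \emph{non-parsed} crossings of $w$ is at least $2(s-d)$.

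From the FK parsing rules, a crossing of $w$ is non-parsed precisely when it is either the first crossing of a new edge of $G_w$, or a ``good'' second crossing of a new edge (that is, a second crossing of some $e \in E_w(\tau)$ for which no sibling edge in $E_w(\tau)$ has been crossed twice earlier). Since every vertex of $V_w \setminus \{\sigma_1\}$ is reached for the first time by exactly one new edge, the first type contributes exactly $|V_w| - 1$ non-parsed crossings. Let $s'$ denote the number of good second crossings; by the defining property of an FK sentence (at most one twice-crossed edge per $d$-cell), $s'$ equals the number of $d$-cells of $\mathrm{supp}_d(w)$ containing a twice-crossed edge in $G_{a_w}$. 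Combining these counts with $\sum_i (\ell_i - 1) = k + 1 - m_w$ yields $m_w = k + 2 - |V_w| - s'$, so the lemma is equivalent to
\[
	|V_w| + s' \geq 2(s-d) + 1.
\]

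The first half of this bound, $|V_w| - 1 \geq s - d$, is straightforward: each of the $s - d$ new $0$-cells $v \notin \sigma_1$ is introduced at a unique birth crossing whose terminus is a new vertex of $G_w$, so $V_w$ contains at least $s - d$ vertices beyond $\sigma_1$. The remaining inequality, which expresses that the deficit must be absorbed by $s'$, will be established by case analysis on each birth $d$-cell $\tau_v$ (the $d$-cell at which $v$ first enters the walk). Since $N_w(\tau_v) \geq 2$, there is a second crossing of $\tau_v$ subsequent to its birth, and this either (i) reuses the birth edge (automatically a good second crossing, since no sibling of the birth edge has yet been crossed), giving $s'_{\tau_v} = 1$; (ii) uses a different new edge of $E_w(\tau_v)$, contributing an excess new edge to $|V_w|$ beyond the $s-d$ birth-edge count; or (iii) uses an old edge of $E_w(\tau_v)$.

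Case (iii) is the main obstacle. Here the existence of an old edge at $\tau_v$ forces a prior visit to one of the $(d-1)$-subcells of $\tau_v$ via some distinct $d$-cell $\tau^*$, and the first crossing of $\tau^*$ is itself a non-parsed crossing which can be charged to $v$. The technical heart of the proof is to show that the assignments $v \mapsto (\text{compensating non-parsed crossing at }\tau^*)$ in case (iii) can be arranged injectively across all new $0$-cells, so that together with the contributions from cases (i) and (ii) one produces $2(s-d)$ distinct non-parsed crossings. This tracing back through the walk, and verifying that distinct new $0$-cells in case (iii) are associated to distinct compensating crossings, is where the combinatorial bookkeeping concentrates and where the proof for $d > 1$ departs most sharply from the classical $d = 1$ argument (in which every edge is crossed at least twice, trivially providing a good second crossing for each new edge).
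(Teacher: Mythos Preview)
Your plan follows essentially the same route as the paper's proof: translate the bound into the statement that at least $2(s-d)$ crossings of $w$ survive in $a_w$, secure $s-d$ of these from the birth crossings $i(j)$, and for each $j$ locate a second surviving crossing $r(j)$ according to what the second crossing of the birth $d$-cell $\tau_{i(j)}$ looks like. The paper lumps your cases (i) and (ii) together (both give $r(j)=k(j)$, the second crossing time of $\tau_{i(j)}$, which is non-parsed), and handles your case (iii) by defining $r(j)=l(j)$ as the first time the walk \emph{arrives at} a third $(d-1)$-face $\tilde\sigma_j$ of $\tau_{i(j)}$. This is automatically a first crossing of a new edge, hence non-parsed.

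Two remarks. First, your case (iii) replacement ``the first crossing of $\tau^*$'' is not quite the right object: the $d$-cell $\tau^*$ may well have been crossed earlier via edges not involving $\tilde\sigma$, and that earlier crossing could be a birth crossing for some other $0$-cell (breaking injectivity) or even a crossing of an old edge (hence parsed). What you need is the crossing at which $\tilde\sigma$ is first reached; then the endpoint is new, the edge is new, and---crucially---no new $0$-cell appears (all $0$-cells of $\tilde\sigma$ lie in $\tau_{i(j)}$), so this crossing is disjoint from the $i(j')$.

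Second, you correctly flag the injectivity verification as the ``technical heart'' but do not carry it out, and this is genuinely where the argument lives. The paper's proof runs through four subcases to show $r(j_1)\neq r(j_2)$ for $j_1<j_2$ (splitting on whether each $r(j_\ell)$ equals $k(j_\ell)$ or $l(j_\ell)$), and the mixed cases require tracking which $0$-cells have appeared by which time. None of this is difficult, but without it the proof is a sketch rather than a proof; you should expect to spend roughly as much space on the distinctness check as on everything you have written so far.
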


\begin{lem}\label{lem:FK_lemma_1} 
	There are at most $\frac{\sqrt{d}}{2}(2\sqrt{d})^{k}$ equivalence classes of FK words of length $k$. 
\end{lem}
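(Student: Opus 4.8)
The plan is to reduce the counting of FK words to a counting of Wigner words, for which Lemma \ref{lem:Size_of_W_k/2_k/2+d}(3) already gives the exact value $\mathcal{C}_{k/2}d^{k/2}$. The first step is to further parse an FK word $w$ (of length $k$) into a sentence of \emph{disjoint} Wigner subwords. Concretely, the graph $G_w$ is a tree with each edge crossed at most twice, and at most one doubly-crossed edge per $d$-cell; I would walk along $w$ and cut the word every time we traverse an edge for the second time in a way that does not immediately close a matched excursion — i.e.\ cut to isolate the maximal ``up-down'' subexcursions. Since $G_w$ is a tree, each maximal such subexcursion, read on its own vertex set, is a closed word in which every edge is crossed exactly twice and every $d$-cell exactly twice, hence lies in $\mathcal{W}_{k_i/2+d}^{k_i}$ for its own length $k_i$; these subwords are edge-disjoint and $\sum_i k_i \le k$. (The leftover non-excursion crossings contribute nothing to the edge count because in a tree every traversal of an edge that is crossed exactly twice must belong to some matched excursion; this needs to be argued carefully, but it is the direct analogue of the $d=1$ fact that an FK word decomposes into Wigner words along its repeated edges.)

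The second step is to count the data needed to rebuild $w$ from this decomposition. We must specify: (i) the composition $k_1,\dots,k_r$ with $\sum k_i \le k$ (and their attachment structure along the backbone tree of $G_w$); (ii) for each block, which equivalence class of $\mathcal{W}_{k_i/2+d}^{k_i}$ it is; (iii) how the blocks are glued — i.e.\ at which already-visited vertex each new block is rooted, and, crucially, via which label in $\llbracket d\rrbracket$ on the connecting edge. For (ii), Lemma \ref{lem:Size_of_W_k/2_k/2+d}(3) gives $\mathcal{C}_{k_i/2}d^{k_i/2}$ choices for block $i$, so the product over blocks already produces the factor $d^{(\sum k_i)/2}\le d^{k/2}$, and the Catalan factors $\prod_i \mathcal{C}_{k_i/2}$ combined with the choice of composition and attachment points should sum, via the standard Catalan generating-function identity $\sum_r \sum_{k_1+\dots+k_r = k}\prod \mathcal{C}_{k_i/2} = \mathcal{C}_{k/2}$ (this is exactly how $d=1$ FK words are counted), to at most $\mathcal{C}_{k/2}\le 4^{k/2}=2^k$. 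Multiplying, $|\{\text{FK words of length }k\}| \le 2^k d^{k/2} = (2\sqrt d)^k$ up to the gluing labels; tracking the extra label freedom in (iii) — at most $d$ per gluing but compensated because a gluing creates no new Wigner-block length — is where the stated constant $\tfrac{\sqrt d}{2}$ (rather than $1$) comes from. I would organize this as: bound $\#\{\text{FK words}\} \le \tfrac{\sqrt d}{2}\sum_{\text{decompositions}} \prod_i |\mathcal{W}_{k_i/2+d}^{k_i}|$ and then evaluate the combinatorial sum.

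The main obstacle I anticipate is step one: proving rigorously that an FK word decomposes into edge-disjoint Wigner words with total length $\le k$, i.e.\ that the ``at most one doubly-crossed edge per $d$-cell'' condition is exactly what makes the excursion structure on the tree $G_w$ behave like the $d=1$ case. The subtlety is that a $d$-cell can carry several singly-crossed edges plus one doubly-crossed edge, so unlike $d=1$ the $d$-cell-level bookkeeping and the edge-level excursion bookkeeping are not identical; I expect to need an argument in the spirit of Lemma \ref{lem:Size_of_W_k/2_k/2+d}(2) — tracking, along the path, the appearance of new $0$-cells versus new $d$-cells — to show the decomposition is well-defined and exhausts all the edges, and to control how the labels $\ell_w$ interact across block boundaries. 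Once the decomposition is in place, the generating-function bookkeeping in step two is routine and mirrors \cite[Section 2.1.6]{AGZ10}.
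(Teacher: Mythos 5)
Your plan matches the paper's proof: parse the FK word along its singly-crossed edges into disjoint Wigner subwords --- the paper's proof of this decomposition, which you rightly flag as the delicate step, uses the FK constraint that each $d$-cell carries at most one doubly-crossed edge to conclude that each block is a Wigner word whose $d$-cells are all crossed exactly twice, on that block's unique doubly-crossed edge --- and then count via a generating function,
\[
\sum_{k\ge1}N_kz^k=\sum_{r\ge1}\Bigl(z+\sum_{l\ge1}\mathcal{C}_l\,d^l\,z^{2l+1}\Bigr)^r=\frac12\Bigl(-1+\frac{1+2dz}{\sqrt{1-4dz^2}}\Bigr),
\]
from whose coefficients $N_k\le\tfrac{\sqrt d}{2}(2\sqrt d)^k$ is read off. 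Two cautions on the counting step. First, the compositional Catalan identity you invoke, $\sum_r\sum_{k_1+\dots+k_r=k}\prod_i\mathcal{C}_{k_i/2}=\mathcal{C}_{k/2}$, is false (already at $k=4$ the left side exceeds $\mathcal{C}_2=2$); the closed-form coefficient extraction above is what actually produces both the $2^k$ and the constant $\tfrac{\sqrt d}{2}$. Second, your worry (iii), that each gluing along a singly-crossed skeleton edge carries an extra label in $\llbracket d\rrbracket$ recording which $0$-cell is dropped, is the right thing to scrutinize: the generating function above attributes to each block only the factor $z^{l_j+1}|\mathcal{W}_{l_j/2+d}^{l_j}|$, with no separate factor for the direction of the connecting skeleton crossing, and a fixed prefactor $\tfrac{\sqrt d}{2}$ cannot absorb a contribution that compounds over the number of blocks. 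So if you pursue your version, make sure to either verify that the skeleton-crossing directions are determined by the Wigner-block data, or account for them explicitly in the bound.
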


\begin{lem}\label{lem:How_to_concatenate_a_new_FK_sequence} 
	Let $b$ be an FK sentence comprised of $m-1$ words of total length $l$ and $z$ an FK word. Then there are at most $l^{2}$ FK words $w$ equivalent to $z$ such that $(b,w)$ is an FK sentence. (Informally, there are at most $l^2$ ways to glue $z$ to $b$ so that the result is an FK sentence.)
\end{lem}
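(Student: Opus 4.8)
The plan is to exploit the tree structure of the FK sentence $b$ together with the constraint defining when $(b,w)$ is again an FK sentence. Recall that $b$ is an FK sentence on $m-1$ words of total length $l$, so its graph $G_b$ is a tree with at most $l-1$ edges and at most $l$ vertices, and every $d$-cell contributes at most one doubly-crossed edge to $G_b$. A word $w$ equivalent to $z$ yielding an FK sentence $(b,w)$ must: (i) have its first letter $\sigma$ equal to a vertex already present in $G_b$ (by the definition of FK sentence); and (ii) when its path is laid on top of $G_b$, only create edges that keep the union graph a tree with $N_{(b,w)}(e)\le 2$ and at most one doubly-crossed edge per $d$-cell. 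The first thing I would do is argue that, once the placement of the first letter $\sigma$ of $w$ is fixed among the at most $l$ vertices of $G_b$, the remaining labels of $w$ are essentially forced — or at worst admit a bounded number of choices — so that the count $l^2$ really comes from choosing \emph{two} vertices of $G_b$.

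The key geometric point (this is the ``joint geodesic'' mentioned in the proof sketch preceding the lemma) is the following. Since $z$ is itself an FK word, its graph $G_z$ is a tree; attaching it at a vertex of $G_b$ and requiring the union to remain a tree means $G_w$ can only share with $G_b$ a connected subtree, and in fact — because of the single-doubly-crossed-edge-per-$d$-cell constraint — the overlap must be a \emph{geodesic} (a path) in $G_b$ emanating from the attachment vertex $\sigma$. So the data determining the gluing is: the attachment vertex $\sigma \in V_b$ (at most $l$ choices), and the other endpoint $\sigma' \in V_b$ of the shared geodesic (at most $l$ choices); the geodesic between them in the tree $G_b$ is then unique, and I claim the rest of $w$ — i.e. the part of $w$ that leaves $G_b$ — is determined up to equivalence by the structure of $z$, because $w$ is required to be equivalent to $z$ and the portion of $w$ in $G_b$ is now pinned down. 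This yields at most $l^2$ admissible $w$.

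The steps, in order: (1) record that $(b,w)$ being an FK sentence forces $G_w$ to meet $G_b$ in a subtree, and that this subtree, traced by the path of $w$, must be a simple path (geodesic) in $G_b$ — here I would use that a shared edge crossed once by $b$ and once by $w$ is fine, but a shared edge crossed twice by $b$ already exhausts the $d$-cell budget, so $w$ cannot re-enter that $d$-cell, which combined with tree-ness forces the path shape; (2) observe that specifying the first letter $\sigma = \sigma_{m,1}$ of $w$ and the last vertex $\sigma'$ of $G_w$ lying in $G_b$ pins the shared geodesic uniquely, since geodesics in trees are unique; (3) argue that, given this shared geodesic inside $G_b$ and the requirement $w \sim z$, the word $w$ is uniquely determined — because $z$ determines which of its vertices are ``revisits'' versus ``new'', the revisited initial segment is mapped onto the fixed geodesic by a unique graph isomorphism, and each genuinely new letter of $w$ is then forced to use a fresh $0$-cell in the canonical way (exactly as in the bijection construction in the proof of Lemma \ref{lem:Size_of_W_k/2_k/2+d}). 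Combining (2) and (3), the number of valid $w$ is at most (number of choices of $\sigma$) $\times$ (number of choices of $\sigma'$) $\le l \cdot l = l^2$.

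The main obstacle I anticipate is step (1): carefully justifying that the overlap of $G_w$ with $G_b$ is forced to be a single geodesic rather than some more complicated subtree. One has to rule out $w$ branching inside $G_b$ and also rule out $w$ using an edge of $G_b$ in a way that violates either the ``tree'' or the ``$\le 2$ crossings, $\le 1$ doubly-crossed edge per $d$-cell'' constraints; the subtle case is when $w$'s path runs along an edge of $G_b$ that was already crossed once in $b$ (allowed, making it doubly crossed) versus twice (forbidden, since then re-entering the associated $d$-cell is impossible), and one must check that once $w$ leaves the geodesic it can never return to $G_b$ without creating a cycle. A secondary technical point is the uniqueness claim in step (3): the map from $z$ to $w$ should be spelled out so that ``$w$ equivalent to $z$ and agreeing with the chosen geodesic on its initial revisited segment'' genuinely has a unique solution, which again reduces to the canonical-labeling argument already used in Section \ref{sec:Semicircle law for A-E[A]}.
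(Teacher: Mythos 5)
Your proposal follows essentially the same route as the paper (and as AGZ Lemma 2.1.25, on which the paper's proof is modeled): show that the overlap $V_b\cap V_w$ is exactly an initial segment of the skeleton of $w$ and forms a geodesic in the forest $\widetilde G_b$ of singly-crossed edges, then count geodesics by their two endpoints to get $l^2$. The paper's proof makes your ``step (1)'' rigorous by an inclusion--exclusion count on the trees $G_a,G_b,G_w,G_{w'}$ showing $E_b\cap E_w=E_{w'}$ and hence $V_b\cap V_w=V_{w'}$, which is the one piece your sketch leaves somewhat heuristic; otherwise the two arguments match.
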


Assuming the last three lemmas, we turn to complete the proof. Assume first that $m_{w}=m$. There are $\binom{k}{m-1}$ ways to choose $m$-tuples $(l_{1},\ldots,l_{m})$ of positive  integers summing to $k+1$ which are the length of the words in the sentence $a_{w}$. Given the lengths of the words, by Lemma \ref{lem:FK_lemma_1} there are at most $\prod_{i=1}^{m}\bigl(\frac{\sqrt{d}}{2}(2\sqrt{d})^{l_{i}}\bigr)=\bigl(\frac{\sqrt{d}}{2}\bigr)^{m}(2\sqrt{d})^{k+1}$ ways to choose equivalence classes for each of these FK words. Finally,
due to Lemma \ref{lem:How_to_concatenate_a_new_FK_sequence}, there are at most $l_{1}^{2}\left(l_{1}+l_{2}\right)^{2}\cdots\left(l_{1}+\ldots+l_{m-1}\right)^{2}\leq k^{2\left(m-1\right)}$ ways to choose representatives in order to obtain an FK sentence. 

Combining all of the above, we find that there are at most 
\[
	\binom{k}{m-1}\left(\frac{\sqrt{d}}{2}\right)^{m}(2\sqrt{d})^{k+1}k^{2(m-1)}
\]
equivalence classes of FK sentences with $m$ words. 

Using now Lemma \ref{lem:num_of_words_in_FK_parsing} we conclude that the number of equivalence classes of FK sentences of length $k+1$ is bounded by 
\begin{multline*}
	 \quad \sum_{m=1}^{k+1-2(s-d)}\binom{k}{m-1}\left(\frac{\sqrt{d}}{2}\right)^{m}(2\sqrt{d})^{k+1}k^{2(m-1)}\\
	\leq \sum_{m=1}^{k+1-2(s-d)}\left(\frac{\sqrt{d}}{2}\right)^{m}(2\sqrt{d})^{k+1}\frac{k^{3(m-1)}}{(m-1)!}
	=  d(2\sqrt{d})^{k}\sum_{m=0}^{k-2(s-d)}\frac{\bigl(\frac{\sqrt{d}}{2}k^{3}\bigr)^{m}}{m!}, \qquad \qquad 
\end{multline*}
thus completing the proof. 
\end{proof}
Next we turn the proof of Lemmas \ref{lem:num_of_words_in_FK_parsing}--\ref{lem:How_to_concatenate_a_new_FK_sequence}.

\begin{proof}[Proof of Lemma \ref{lem:num_of_words_in_FK_parsing}]
	 Fix $w\in\mathcal{W}_{s}^{k}$ and let $a_{w}$ be the parsing of $w$ into an FK sentence consisting of $m_{w}$ words. Denoting by $r_{w}$ the number of crossings along $a_{w}$, we have $k+1=m_{w}+r_{w}$. Thus, it is enough to show that $r_{w}\geq2(s-d)$. Since $|\mathrm{supp}_{0}(w)|=s$ it follows that there are $(s-d)$ crossings in $w$, denoted by $(\sigma_{i(j)}\sigma_{i(j)+1})_{j=1}^{s-d}$, such that $\sigma_{i(j)+1}$ contains a $0$-cell that does not belong to $\sigma_{1}\cup\ldots\cup\sigma_{i(j)}$. Without loss of generality, we assume that the $0$-cells are indexed by $\llbracket n\rrbracket$ and that they are discovered in increasing order. Therefore in the crossing $\sigma_{i(j)}\sigma_{i(j)+1}$ we discover the $0$-cell $d+i(j)$. Since $\sigma_{i(j)+1}$
contains a new $0$-cell that did not appear in $\sigma_{1}\cup\ldots\cup\sigma_{i(j)}$
we must have that $\sigma_{i(j)}\sigma_{i(j)+1}$ is a first crossing of the edge $\{ \sigma_{i(j)},\sigma_{i(j+1)}\}$ and that it does not create a loop (i.e., it is a new edge). Consequently, $\sigma_{i(j)}\sigma_{i(j)+1}$ for $1\leq j\leq s-d$ are all crossings in $a_{w}$ as well. In addition, the crossings $\sigma_{i(j)}\sigma_{i(j)+1}$ for $1\leq j\leq s-d$
are all disjoint since the $d$-cells associated with them $\sigma_{i(j)}\cup\sigma_{i(j)+1}$
are disjoint. Thus, $r_{w}\geq s-d$. 

Next, for $1\leq j\leq s-d$ denote by $\sigma_{k(j)}\sigma_{k(j)+1}$ the second crossing in $w$ of the $d$-cell $\sigma_{i(j)}\cup\sigma_{i(j)+1}$ in $w$. Each of the crossings $\sigma_{k(j)}\sigma_{k(j)+1}$ which is also a crossing in $a_{w}$ increases $r_{w}$ by one and
so we only need to find an additional crossing of $a_{w}$ to replace
those crossings $\sigma_{k(j)}\sigma_{k(j)+1}$ in $w$ which are not crossings in $a_{w}.$ Fix $1\leq j\leq s-d$ such that $\sigma_{k(j)}\sigma_{k(j)+1}$ is not a crossing in $a_{w}$. Since $\sigma_{k(j)}\sigma_{k(j)+1}$ is the second crossing of the $d$-cell $\sigma_{i(j)}\cup\sigma_{i(j)+1}$ this can only happen if the edge $\{ \sigma_{k(j)},\sigma_{k(j)+1}\}$ is an old edge which in particular implies that $\{ \sigma_{k(j)},\sigma_{k(j)+1}\} \neq\{ \sigma_{i(j)},\sigma_{i(j)+1}\}$, i.e., there are at least $3$ distinct $(d-1)$-cells in
the boundary of the $d$-cell $\sigma_{i(j)}\cup\sigma_{i(j)+1}$ appearing along $w$. Let $\widetilde{\sigma}_{j}$ denote the first $(d-1)$-cell in the boundary of $\sigma_{i(j)}\cup\sigma_{i(j)+1}$ that appears in $w$ and is not $\sigma_{i(j)}$ or $\sigma_{i(j)+1}$. Finally, let $\sigma_{l(j)}\sigma_{l(j)+1}$ denote the first crossing in $w$ such that $\sigma_{l(j)+1}=\widetilde{\sigma}_{j}$. Since $\sigma_{l(j)+1}$ is the first appearance of $\widetilde{\sigma}_{j}$, the crossing $\sigma_{l(j)}\sigma_{l(j)+1}$ is not of an old edge and is a first visit to the edge $\{ \sigma_{l(j)},\sigma_{l(j)+1}\} $. Consequently, it is also a crossing in $a_{w}$. 

For $1\leq j\leq s-d$ define 
\[
	r(j)=
	\begin{cases}
		k(j) & \quad\text{if } \sigma_{k(j)}\sigma_{k(j)+1}\,\mbox{is a crossing in }a_{w}\\
		l(j) & \quad\text{if }\sigma_{k(j)}\sigma_{k(j)+1}\,\mbox{is not a crossing in }a_{w}
	\end{cases}.
\]

In order to show that $r_{w}\geq2(s-d)$ and thus to complete
the proof it is thus left to show that the crossing times $\{ i(j)\} _{j=1}^{s-d}\cup\{ r(j)\} _{j=1}^{s-d}$ are distinct. This is indeed the case, as can be seen by the following
observations:
\begin{itemize}
	\item It was already observed before that the times $\{ i(j)\} _{j=1}^{s-d}$ are all distinct as the associated crossings $\sigma_{i(j)}\sigma_{i(j)+1}$ form distinct $d$-cells. Similarly, for every $1\leq j_{1},j_{2}\leq s-d$ the crossing times $i(j_{1})$ and $r(j_{2})$
are distinct. Indeed, $i(j_{1})$ is the first crossing time in which the $0$-cell $j_{1}+d$ is observed. Since $r(j_{2})$ cannot be a crossing time in which a new $0$-cell is observed, it
follows that the times $i(j_1)$ and $r(j_2)$ must be distinct. 
	\item Finally, we claim that the times $\{r(j)\}_{j=1}^{s-d}$ are distinct. Indeed, assume that for some $1\leq j_{1}<j_{2}\leq s-d$ we have $r(j_{1})=r(j_{2})$.

\begin{itemize}
	\item If $r(j_{1})=k(j_{1})$ and $r(j_{2})=k(j_{2})$, then $r(j_{1})\neq r(j_{2})$ since the $d$-cells $\sigma_{k(j_{1})}\cup\sigma_{k(j_{1})+1}=\sigma_{i(j_{1})}\cup\sigma_{i(j_{1})+1}$ and $\sigma_{k(j_{2})}\cup\sigma_{k(j_{2})+1}=\sigma_{i(j_{2})}\cup\sigma_{i(j_{2})+1}$
associated with the crossings are distinct and thus in particular so are the crossing times. 
	\item If $r(j_{1})=k(j_{1})$ and $r(j_{2})=l(j_{2})$, then $r(j_{1})\neq r(j_{2})$ since the $d$-cell $\sigma_{l(j_{2})}\cup\sigma_{l(j_{2})+1}$ contains the $0$-cell $d+j_{2}$ which by definition does not belong to the $d$-cell $\sigma_{k(j_{1})}\cup\sigma_{k(j_{1})+1}=\sigma_{i(j_{1})}\cup\sigma_{i(j_{1})+1}$.
	\item If $r(j_{1})=l(j_{1})$ and $r(j_{2})=k(j_{2})$, then $r(j_{1})\neq r(j_{2})$ (see Figure \ref{fig:FK_case}). Indeed, assume that $r(j_{1})=r(j_{2})$. Since the $d$-cell $\sigma_{l(j_{1})}\cup\sigma_{l(j_{1})+1}$ is the $d$-cell in which the $0$-cell $d+j_{2}$ appears for the
first time we must have that $d+j_{2}$ belongs to $\sigma_{l(j_{1})}\cup\sigma_{l(j_{1})+1}$.
In addition, since $\sigma_{l(j_{1})+1}$ belongs to $\sigma_{i(j_{1})}\cup\sigma_{i(j_{1})+1}$
and $j_{1}<j_{2}$, it follows that the $0$-cell $d+j_{2}$ does not belong to $\sigma_{l(j_{1})+1}$ and so it must be in $\sigma_{l(j_{1})}$. However, this implies that $\sigma_{k(j_{2})}\cup\sigma_{k(j_{2})+1}=\sigma_{i(j_{2})}\cup\sigma_{i(j_{2})+1}$ is not the first $d$-cell containing the $0$-cell $d+j_{2}$, which contradicts its definition.

	\begin{figure}[h]
		\begin{centering}
		\includegraphics[scale=0.8]{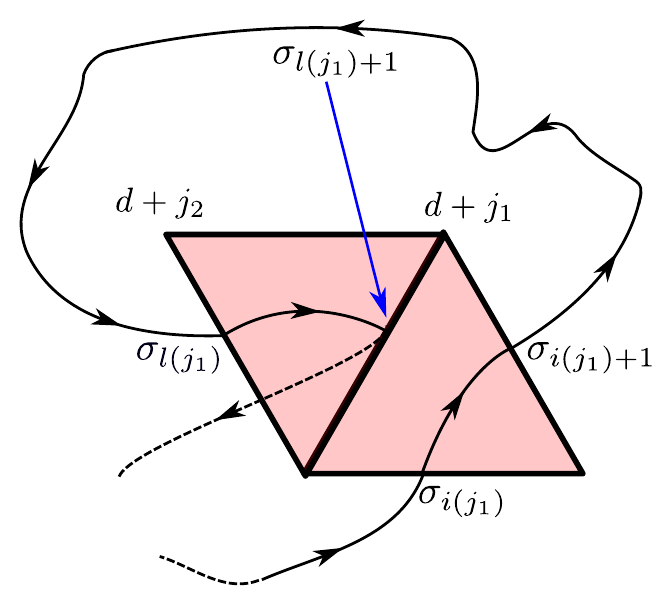}
		\caption{Illustration for the case $r(j_{1})=l(j_{1})$ and $r(j_{2})=k(j_{2})$. Note that the in order for $r(j_{1})$ to be the same as $r(j_{2})$ the path must reach the $(d-1)$-cell $\sigma_{l(j_{2})+1}$ (which does not contain the $0$-cell $d+j_{2}$) through a $d$-cell
that contains the $0$-cell $d+j_{2}$. However, this implies that this $d$-cell is not the first $d$-cell in which the $0$-cell $d+j_{2}$
appears. \label{fig:FK_case}}
		\par\end{centering}
	\end{figure}
 
	\item If $r(j_{1})=l(j_{1})$ and $r(j_{2})=l(j_{2})$, then $r(j_{1})\neq r(j_{2})$. Indeed, we must have that $\sigma_{l(j_{1})+1}$, which belongs to the boundary of $\sigma_{i(j_{1})}\cup\sigma_{i(j_{1})+1}$, only contains $0$-cells that appeared until the crossing of $\sigma_{i(j_{1})}\cup\sigma_{i(j_{1})+1}$, i.e., only $0$-cells from $1,2,\ldots,j_{1}+d$. However, $\sigma_{l(j_{2})+1}$ must contain the $0$-cell $d+j_{2}$ by definition. This leads to a contradiction since $j_{1}<j_{2}$. 
	\end{itemize}
\end{itemize}
This concludes the proof.
\end{proof}

\begin{proof}[Proof of Lemma \ref{lem:FK_lemma_1}]
	 We start by showing that each FK word can be written in a unique way as a concatenation of disjoint Wigner words. Let $w=\sigma_{1}\ldots\sigma_{l}$ be an FK word of length $l$. Let $\{\sigma_{i(j)},\sigma_{i(j)+1}\} _{j=1}^{r}$ be the edges in the graph $G_{w}$ which are crossed precisely once by the path $w$. Defining $i(0)=0$ and $w_{j}=\sigma_{i(j-1)+1}\ldots\sigma_{i(j)}$, we now claim that the words $w_{j}$ are closed and disjoint and that $G_{w_{j}}$ is a tree in which every $d$-cell, and every edge are crossed precisely twice, i.e., the words $w_{j}$ are disjoint Wigner words. Indeed, denote the above parsing of $w$ by $b_{w}$. Since $G_{w}$ is a tree, the graph $G_{b_{w}}$ is a forest and the paths generated by $b_{w}$ on it cross each edge precisely twice. In addition, since in an FK word there is at most one edge inside each $d$-cell which is crossed twice, it follows that each $d$-cell $\tau$ which
is crossed in $b_{w}$ is crossed precisely twice; both times on the unique edge $e\in E_{w}(\tau)$ such that $N_{w}(e)=2$. Finally, note that the words $w_{j}$ are closed since each edge in their graph is crossed precisely twice and it contains no loops as a subgraph of $G_{b_{w}}$. Thus each word must be associated with a unique connected component of $G_{b_{w}}$ and its path must cross each of the edges and $d$-cells of this component twice.  

From this point on the proof proceeds in the same way as in the case $d = 1$ (see \cite[Lemma 2.1.24]{AGZ10}), and we include the conclusion of the proof, adapted from \cite[Lemma 2.1.24]{AGZ10}, for the reader's convenience. The aforementioned decomposition
is unique and one concludes that with $N_{k}$ denoting the number of equivalence classes of FK words of length $k$ we get 
\begin{align*}
	\sum_{k=1}^{\infty}N_{k}z^{k} & =\sum_{r=1}^{\infty}\sum_{l_{1},l_{2},\ldots,l_{r}\in2\mathbb{N}}\prod_{j=1}^{r}z^{l_{j}+1}|\mathcal{W}_{l_{j}/2+d}^{l_{j}}|=\sum_{r=1}^{\infty}\left(z+\sum_{l=1}^{\infty}z^{2l+1}\left|\mathcal{W}_{l+d}^{2l}\right|\right)^{r}\\
 & =\sum_{r=1}^{\infty}\left(z+\sum_{l=1}^{\infty}z^{2l+1}\mathcal{C}_{l}d^{l}\right)^{r},
\end{align*}
where for the last step we used Lemma \ref{lem:Size_of_W_k/2_k/2+d}. Consequently, using the generating function of the Catalan numbers, for $|z|<\frac{1}{2\sqrt{d}}$ we have
\[
	\sum_{k=1}^{\infty}N_{k}z^{k}=\sum_{r=1}^{\infty}\left(\frac{1-\sqrt{1-4dz^{2}}}{2dz}\right)^{r}=\frac{1}{2}\left(-1+\frac{1+2dz}{\sqrt{1-4dz^{2}}}\right).
\]
Since $\frac{1}{\sqrt{1-t}}=\sum_{k=0}^{\infty}\frac{1}{4^{k}}\binom{2k}{k}t^{k}$
for $\left|t\right|<1$ it follows that 
\begin{align*}
	\sum_{k=1}^{\infty}N_{k}z^{k} & =-\frac{1}{2}+\frac{1}{2}(1+2dz)\sum_{k=1}^{\infty}d^{k}\binom{2k}{k}z^{2k}
\end{align*}
and thus 
\[
|N_{k}|\leq\frac{\sqrt{d}}{2}(2\sqrt{d})^{k}. \qedhere
\]
\end{proof}

With our definition of FK sentences, Lemma \ref{lem:How_to_concatenate_a_new_FK_sequence} follows by the same argument as in the case $d=1$ (see \cite[Lemma 2.1.25]{AGZ10}). For the reader's convenience we include the appropriately adapted version of this argument here. We start with some additional definitions. Recall from the proof of Lemma \ref{lem:FK_lemma_1} that any FK word $w$ can be written in a unique way as a concatenation of disjoint Wigner words $b_{w}=(w_{1},\ldots,w_{r})$. With $\sigma_{i,1}$ denoting the first (and last) letter of $w_{i}$, define the \emph{skeleton} of the FK word $w$ to be the word $\widetilde{w}=\sigma_{1,1}\sigma_{2,1}\ldots\sigma_{r,1}$. Finally, for an FK sentence $a$ with a graph $G_{a}$ define a new graph $\widetilde{G}_{a}=(\widetilde{V}_{a},\widetilde{E}_{a})$ by setting $\widetilde{V}_{a}=V_{a}$ and $\widetilde{E}_{a}=\{ e\in E_{a}\,:\,N_{a}(e)=1\}$. Since $G_{a}$ is a tree and the edges removed from it are the one corresponding to edges of the Wigner words $G_{w_{j}}$, the resulting graph $\widetilde{G}_{a}$ is a forest.

\begin{proof}[Proof of Lemma \ref{lem:How_to_concatenate_a_new_FK_sequence}]
As avertised above, the proof is almost identical to that of \cite[Lemma 2.1.25]{AGZ10}. We begin with the following observation. Suppose $b$ is an FK sentence with $m-1$ words and that $w$ is an FK word equivalent to $z$ such that $(b,w)$ is an FK sentence. Denote the skeleton of $w$ by $\widetilde{w}=\sigma_{j_{1}}\sigma_{j_{2}}\ldots\sigma_{j_{r}}$, so that $\sigma_{j_{1}}$ is a letter in $b$ by definition. Let $l$ be the largest index such that $\sigma_{j_{l}}$ is a letter in $b$ and set $w'=\sigma_{j_{1}}\ldots\sigma_{j_{l}}$. Then  $V_{b}\cap V_{w}=V_{w'}$ and $w'$ is a geodesic in $\widetilde{G}_{b}$. 

Assuming the observation holds we finish the proof. The number of ways to choose an FK word $w$ equivalent to $z$ such that $(b,w)$ is an FK sentence is bounded by the number of ways to choose a geodesic in its forest $\widetilde{G}_{b}$. The number of ways to choose a geodesic inside
a forest is bounded by the number of ways to choose its endpoints (since the geodesic is unique) which in turn is bounded by $l^{2}$. 

The proof of the observation follows from the same argument that is used in the graph case $d=1$; see \cite[Lemma 2.1.25]{AGZ10}. Suppose $a$ is an FK sentence. Then $G_{a}$ is a tree, and since the Wigner words composing $w$ are disjoint, $w'$ is the unique geodesic in
$G_{w}\subset G_{a}$ connecting $\sigma_{j_{1}}$ to $\sigma_{j_{l}}$. But $w'$ visits only edges of $G_{b}$ that have been visited exactly once by the words constituting $b$, for otherwise $(b,w)$ would not be an FK sentence (that is, a comme would need to be inserted
to split $w$). Thus $E_{w'}\subset\widetilde{E}_{b}$. Since $w$ is an FK word, $\widetilde{E}_{w}=E_{\widetilde{w}}$. Since $a$ is an FK sentence $E_{b}\cap E_{w}=\widetilde{E}_{b}\cap\widetilde{E}_{w}$. Thus $E_{b}\cap E_{w}=E_{w'}$. But, now recall that $G_{a},G_{b},G_{w},G_{w'}$ are all trees and hence 
\begin{align*}
	|V_{a}| & =1+|E_{a}|=1+|E_{b}|+|E_{w}|-|E_{b}\cap E_{w}|=1+|E_{b}|+|E_{w}|-|E_{w'}|\\
 & =1+|E_{b}|+1+|E_{w}|-1-|E_{w'}|=|V_{b}|+|V_{w}|-|V_{w'}|.
\end{align*}
Since $|V_{b}|+|V_{w}|-|V_{b}\cap V_{w}|=|V_{a}|$, it follows that $|V_{w'}|=|V_{b}\cap V_{w}|$. Since $V_{w'}\subset V_{b}\cap V_{w}$ one concludes that $V_{w'}=V_{b}\cap V_{w}$,
as claimed. 
\end{proof}


\section{Proof of Theorem \ref{thm:location_of_eigenvalues} } \label{sec:location_of_eigenvalues}

We start this section with the following stronger version of Theorem \ref{thm:location_of_eigenvalues}.
\begin{thm}
\label{thm:location_of_eigenvalues_strong_ver} Assume $d\geq2$ and
$nq\geq 10^3d^2$. 
\begin{enumerate}
\item For every $\xi>0$, the $\binom{n-1}{d}$ smallest eigenvalues of
the matrix $A$ are within the interval $-pd+\sqrt{nq}\,[-2\sqrt{d}-\xi,2\sqrt{d}+\xi]$ with probability at least $1-\mathcal{E}(\xi)$ (see \eqref{eq:norm_of_H_prob}).

\item For every $\xi>0$, if $nq\geq d(2d+2\xi)^6 n^{-1} \log^6n$,
then the remaining $\binom{n-1}{d-1}$ eigenvalues of $A$ are inside the interval $np+[-\Gamma(\xi,n),\Gamma(\xi,n)]$
with probability at least $1-\mathcal{E}((\sqrt{6}-2)\sqrt{d})-\mathscr{E}(\xi)$
where
\[
	\mathscr{E}(\xi)\equiv \mathscr{E}_{n,d}(\xi):=\frac{4e^{3}d^{5/2}}{(d-1)!}\exp(5\log(2d+2\xi)+5\log\log n-\xi\log n)
\]
and 
\begin{equation}
	\Gamma(\xi,n):=6d+\frac{200d^{3/2}}{\sqrt{nq}-24d} +100d^{7/2}(d+\xi)^{3}\sqrt{q}\log^{3}n.\label{eq:interval_size-1}
\end{equation}

\end{enumerate}
\end{thm}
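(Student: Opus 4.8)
The plan rests on the exact identity coming from Lemma~\ref{lem:eigenvalues_of_BA}. Write $P$ for the orthogonal projection of $\Omega^{d-1}(X)$ onto the $\binom{n-1}{d-1}$-dimensional eigenspace of $\mathbb A$ for the eigenvalue $n-d$; that lemma gives $\mathbb A=nP-dI$, and hence by \eqref{eq:The_matrix_H} and \eqref{eq:expectation_of_adj_matrix}
\begin{equation*}
	A+pdI \;=\; npP+\sqrt{nq}\,H .
\end{equation*}
For part~(1) I work on the event $\{\Vert H\Vert\le 2\sqrt d+\xi\}$, which by Theorem~\ref{thm:norm_bound} has probability at least $1-\mathcal E(\xi)$. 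Since $npP\succeq 0$, the identity gives $A+pdI\succeq\sqrt{nq}\,H\succeq-\sqrt{nq}\,(2\sqrt d+\xi)I$, so every eigenvalue of $A$ is at least $-pd-\sqrt{nq}\,(2\sqrt d+\xi)$. For the matching upper bound on the $\binom{n-1}{d}$ smallest eigenvalues I compress $A+pdI$ to $\operatorname{im}(I-P)$, the eigenspace of $\mathbb A$ for $-d$, which has dimension $\binom{n-1}{d}$; since $(I-P)P=0$ this compression equals $\sqrt{nq}\,(I-P)H(I-P)$, of norm at most $\sqrt{nq}\,(2\sqrt d+\xi)$, so Cauchy's interlacing inequalities give $\lambda_i(A+pdI)\le\sqrt{nq}\,(2\sqrt d+\xi)$ for $i\le\binom{n-1}{d}$. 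Subtracting $pd$ proves part~(1).

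The substance is part~(2), whose heart is a refined bound on the restriction of $H$ to $\operatorname{im}P$: I expect to prove that there is a constant $C_d>0$ depending only on $d$ such that, with probability at least $1-\mathscr E(\xi)$,
\begin{equation*}
	\sqrt{nq}\,\Vert PHP\Vert \;\le\; C_d\,(d+\xi)^3\,\sqrt q\,\log^3 n .
\end{equation*}
Granting this, I run second-order perturbation theory on $A+pdI=npP+\sqrt{nq}\,H$, working on the intersection of $\{\Vert H\Vert\le\sqrt{6d}\}$ with the event in the last display; by Theorem~\ref{thm:norm_bound} this intersection has probability at least $1-\mathcal E((\sqrt6-2)\sqrt d)-\mathscr E(\xi)$. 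The hypothesis $nq\ge 10^3d^2$ forces $np\ge 2\sqrt{6dnq}$, so Weyl's inequalities show that exactly $\binom{n-1}{d-1}$ eigenvalues of $A+pdI$ lie in $[\,np-\sqrt{6dnq},\,np+\sqrt{6dnq}\,]$, that these are the top $\binom{n-1}{d-1}$, and that they are separated from the remaining eigenvalues. For such an eigenvalue $\mu$, decomposing with respect to $\operatorname{im}P\oplus\operatorname{im}(I-P)$ and noting that $\mu$ lies outside the spectrum of $\sqrt{nq}\,(I-P)H(I-P)$, the Schur complement formula shows that $\mu$ is an eigenvalue of
\begin{equation*}
	S(\mu)\;=\;npI_++\sqrt{nq}\,PHP+nq\,PH(I-P)\bigl(\mu I_--\sqrt{nq}\,(I-P)H(I-P)\bigr)^{-1}(I-P)HP ,
\end{equation*}
where $I_\pm$ denote the identities of $\operatorname{im}P$ and $\operatorname{im}(I-P)$; hence $|\mu-np|\le\Vert S(\mu)-npI_+\Vert$. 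I bound the right-hand side by $\sqrt{nq}\,\Vert PHP\Vert+\dfrac{6d\,nq}{\mu-\sqrt{6dnq}}$, use $\mu\ge np-\sqrt{6dnq}$ to replace the denominator by $np-2\sqrt{6dnq}$, expand $\dfrac{6d\,nq}{np-2\sqrt{6dnq}}$ as $6d(1-p)$ plus a remainder of order $d^{3/2}(nq)^{-1/2}$, add the $pd$ arising from the shift from $A$ to $A+pdI$ (so that the constant becomes $6d(1-p)+pd\le 6d$), and insert the norm bound; this reproduces $\Gamma(\xi,n)$, the three summands of \eqref{eq:interval_size-1} coming respectively from $6d(1-p)+pd$, the geometric-series remainder, and $\sqrt{nq}\,\Vert PHP\Vert$. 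Thus the top $\binom{n-1}{d-1}$ eigenvalues of $A$ lie in $np+[-\Gamma(\xi,n),\Gamma(\xi,n)]$, which is part~(2); the condition $nq\ge d(2d+2\xi)^6n^{-1}\log^6 n$ is precisely what is needed to run the moment computation below at the relevant power.

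It remains to prove $\sqrt{nq}\,\Vert PHP\Vert\le C_d(d+\xi)^3\sqrt q\log^3 n$; this is the main obstacle and calls for a twist of the F\"uredi--Koml\'os analysis of Section~\ref{sec:Bounding-the-norm-of-H}. The starting point is that $P=\frac1n(\mathbb A+dI)$, so the entries of $P$ lie in $\{0,\pm\frac1n\}$ off the diagonal and equal $\frac dn$ on it. Since $\operatorname{tr}\bigl((PHP)^{2l}\bigr)=\operatorname{tr}\bigl((HP)^{2l}\bigr)$, I would expand $\mathbb E\bigl[\operatorname{tr}((HP)^{2l})\bigr]$ into closed walks of length $4l$ whose steps alternate between ``$H$-steps'' (carrying the centered, $d$-cell-dependent randomness of $H$) and ``$P$-steps'' (deterministic, each of absolute value at most $d/n$). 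As in Section~\ref{sec:Bounding-the-norm-of-H}, for a walk to contribute its $H$-steps must cross each $d$-cell an even number of times, and the bookkeeping of Claim~\ref{claim:number_of_0_and_d_cells} relating new $0$-cells to new $d$-cells remains in force. The new ingredient is the extra $1/n$ carried by every $P$-step: I would show that these factors cannot all be compensated by discovering fresh $0$-cells, because a $P$-step that discovers a new $0$-cell must itself introduce a new $d$-cell, which (being constrained to a neighboring $(d-1)$-cell and crossed an even number of times along the tree-like dominant walks) is paid for only at a cost that does not recover the lost power of $n$. This yields a bound of the shape $\mathbb E\bigl[\operatorname{tr}((PHP)^{2l})\bigr]\le n^{\,d-1}\bigl(\operatorname{poly}_d(l)/n\bigr)^{l}$, and Markov's inequality with $2l$ of order $(d+\xi)\log n$, followed by an optimization over $l$, then gives the stated threshold and the error probability $\mathscr E(\xi)$. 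The combinatorial lemma in the previous sentence --- the precise statement that the $P$-steps genuinely cost powers of $n$, which is the analog for the restricted operator of Lemma~\ref{lem:Size_of_W_k/2_k/2+d}(2) and Lemma~\ref{lem:num_of_words_in_FK_parsing} --- is the step I expect to be the most delicate.
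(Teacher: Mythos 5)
Your plan matches the paper's proof in structure. Part (1) is the same argument modulo presentation: the paper applies Weyl's inequality to $(nq)^{-1/2}(A+pdI)=H+\kappa P$ using that $\kappa P$ has rank $\binom{n-1}{d-1}$, while you combine the positivity of $npP$ with Cauchy interlacing on the compression to $\operatorname{im}(I-P)$; both yield the same bound. For part (2), the paper splits $Y=Y_1+Y_2$ into block-diagonal and off-diagonal parts with respect to $P,\overline{P}$ and invokes \cite[Proposition A.1]{KY14} to compare the eigenvalues of $Y$ and $Y_1$ at second order, whereas you run an exact Schur-complement identity $\mu\in\operatorname{spec}(S(\mu))$ for eigenvalues $\mu$ of $A+pdI$ in the top cluster. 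These are two realizations of the same second-order perturbation idea and produce the same three summands of $\Gamma(\xi,n)$: the $6d$ from $6d(1-p)+pd$, the geometric remainder of order $d^{3/2}/\sqrt{nq}$, and $\sqrt{nq}\,\|PHP\|$. Your Schur-complement setup is slightly more self-contained (no external lemma cited); the paper's version is a little shorter because the cited estimate packages the off-diagonal correction.

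The genuine remaining work, which you correctly identify but only sketch, is the bound on $\|PHP\|$ via the modified F\"uredi--Koml\'os method with alternating $H$- and $P$-steps. The paper formalizes this with $2$-words and the FK sentence count. Two details you gloss over deserve flagging. First, the accounting of new $0$-cells against $d$-cells for $2$-words is weaker than Claim~\ref{claim:number_of_0_and_d_cells}: the paper's Lemma~\ref{lem:Lemma_trivial_eigenvalues_estimation_1} only gives $|\mathrm{supp}_0(w)|-d\le 2|\mathrm{supp}_d^{\mathrm{odd}}(w)|$, a factor two worse than the $H$-case, because the even-index $P$-steps can also reveal new $0$-cells; the resulting $q^{(s-d)/2}$ (not $q^{s-d}$) is then what makes $(n\sqrt q)^{s-d}$ appear in \eqref{eq:trivial_eigenvalues_estimation_4}, and the loss is compensated precisely by the full $n^{-1}$ of each $P$-step. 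Your heuristic ("a $P$-step that discovers a new $0$-cell must itself introduce a new $d$-cell") is in the right direction but does not capture this factor-two weakening. Second, the diagonal entries of $P$ produce loops in the walk graph, which require an extra case in the FK parsing (Lemma~\ref{lem:estimation_for_trivial_ev_FK_bound} parses at every loop). Correspondingly, the paper is content with a weaker bound on the number of words in the FK parsing --- $m_w\le 2k+1-(s-d)$ rather than the analog of $k+1-2(s-d)$ from Lemma~\ref{lem:num_of_words_in_FK_parsing} --- because the extra $n^{-1}$ per $P$-step covers the loss. Making that last trade-off rigorous is exactly the delicate step you left open, and in the paper it occupies Lemmas~\ref{lem:Lemma_trivial_eigenvalues_estimation_1} and~\ref{lem:estimation_for_trivial_ev_FK_bound}.
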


Before proving Theorem \ref{thm:location_of_eigenvalues_strong_ver}, we show how it implies Theorem \ref{thm:location_of_eigenvalues}.

\begin{proof}[Proof of Theorem \ref{thm:location_of_eigenvalues} assuming Theorem
\ref{thm:location_of_eigenvalues_strong_ver}]
This is an elementary exercise, and we only sketch the argument. Part (1) of Theorem \ref{thm:main_result} is proved by dealing separately with the case $\xi<1$ and $\xi\geq 1$. In both cases one can show that the assumption $nq\geq\frac{C(1+D)^{4}}{\min\{ \xi^6,1\}}\log^{4}n$ implies that $\mathcal{E}(\xi/2)< n^{-D}$. Finally, note that under the same assumption we have $pd\leq \xi/(2 \sqrt{nq})$ which allows us to use the interval $\sqrt{dnq}[-2-\xi,2+\xi]$ instead of $-pd+\sqrt{dnq}[-2-\xi/2,2+\xi/2]$.
As for part (2) of Theorem \ref{thm:main_result}, note that the assumption $nq\geq C(1+D)^{4}\log^{4}n$ implies that $\mathcal{E}((\sqrt{6}-2)\sqrt{d})<n^{-D}/2$ for an appropriate choice of $C$. By choosing $\xi=C'(D+1)$ with $C'$ depending only on $d$ one can verify that $\mathscr{E}(C'(D+1))$ is bounded by $n^{-D}/2$ as well. Therefore as long as $nq \geq \frac{C'^2(D+1)^2 \log^6 n}{n}$ we have that with probability at least $1-n^{-D}$ that the remaining eigenvalues are within the interval $np+[-\Gamma(C'(1+D),n),\Gamma(C'(1+D),n)]$ which under both assumption of part (2) is contained within the interval $np+[-7d,7d]$ for an appropriate choice of the constant $C$. Finally, note that the case $nq\leq \frac{C'^2(D+1)^2 \log^6 n}{n}$ is impossible, since when combined with the assumption $nq \geq C(D+1)^4 \log^4 n$ implies that $(D+1)^2 \leq \frac{C'^2}{C} \frac{\log^2 n}{n}$, which by increasing the value of the constant $C$ does not hold for any value of $n$.
\end{proof}

The rest of this section is devoted to the proof of Theorem \ref{thm:location_of_eigenvalues_strong_ver}. We introduce 
\begin{equation}
	P:=\frac{1}{n}(\mathbb{A}+dI),
\end{equation}
which by Lemma \ref{lem:eigenvalues_of_BA} is an orthogonal projection of rank $\binom{n-1}{d-1}$, and
\begin{equation}
	\kappa := \frac{\sqrt{nq}}{1-p}.
\end{equation}

\begin{proof}[Proof of Theorem \ref{thm:location_of_eigenvalues_strong_ver} part
(1)] \label{Proof:proof_of_Theorem_location_part_1} 
Recall from (\ref{eq:decomposition_of_A+dI}) that 
\begin{align}
	(nq)^{-1/2}(A+pdI) & =H+\kappa P,		
	\label{eq:better_estimation_on_trivial_eigenvalues_1}
\end{align}
Therefore, it follows from Theorem \ref{thm:norm_bound} that for every $\xi>0$
with probability at least $1-\mathcal{E}(\xi)$ 
\[
	\Vert (nq)^{-1/2}(A+pdI)-\kappa P\Vert =\Vert H\Vert \leq2\sqrt{d}+\xi.
\]

Since $P$ is an orthogonal projection whose rank is $\binom{n-1}{d-1}$ it follows from Weyl's inequality that the eigenvalues of $(nq)^{-1/2}(A+pdI)$ are shifted by at most $2\sqrt{d}+\xi$ from the ones of $\kappa P$, which in particular gives the required result for the $\binom{n-1}{d}$ smallest eigenvalues of $A$.
\end{proof}

The above proof also implies that the remaining $\binom{n-1}{d-1}$ eigenvalues of $A$ are within an interval of size $\sqrt{nq}$ around $\kappa$ with probability at least $1-\mathcal{E}(\xi)$. Since we are interested in showing a better concentration result for
those eigenvalues we need to obtain better estimation on the largest $\binom{n-1}{d-1}$ eigenvalues. More precisely, we have the following. Let $Y:=(nq)^{-1/2}(A+pdI)$ and $\overline{P}:=I-P$, and split 
\[
	Y=\underset{Y_{1}}{\underbrace{(PYP+\overline{P}Y\overline{P})}}+\underset{Y_{2}}{\underbrace{(PY\overline{P}+\overline{P}YP)}}.
\]
It follows that with probability at least $1-\mathcal{E}(\xi)$ we have $\|Y_2\|\leq \|H\|\leq 2\sqrt{d}+\xi$ and that (due to \eqref{eq:better_estimation_on_trivial_eigenvalues_1}) the distance between the eigenvalues of $PYP=PHP+\kappa P$ and $\overline{P}Y\overline{P}=\overline{P}H\overline{P}$ is at least $\kappa - 2\|H\| \geq \kappa -2(2\sqrt{d}+\xi)$. Consequently, by \cite[Proposition A.1]{KY14},
\begin{equation}
	|\lambda_{i}(Y_{1})-\lambda_{i}(Y)|\leq\frac{(2\sqrt{d}+\xi)^{2}}{\kappa-4(2\sqrt{d}+\xi)}=O\left(\frac{1-p}{\sqrt{nq}}\right)
	\label{eq:better_estimation_on_trivial_eigenvalues_2}
\end{equation}
for every $\binom{n-1}{d}<i\leq\binom{n}{d}$. Since $Y_1= PHP +\kappa P +\overline{P} H\overline{P}$ it follows that $\lambda_{i}(Y_{1})$ for $\binom{n-1}{d}<i\leq\binom{n}{d}$
are shifted from $\kappa$ by an amount which is at most the norm
of the matrix $PHP$. 

Combining all of the above we conclude that 
\begin{equation}\label{eq:error_on_eigenvalue_dist_from_kappa}
\begin{aligned}
	|\lambda_i((nq)^{-1/2} A) - \kappa| & \leq |\lambda_i((nq)^{-1/2} A) - \lambda_i(Y)| 
	+ |\lambda_i(Y) - \lambda_i(Y_1)| + |\lambda_i(Y_1)-\kappa| \\ 
	& \leq \frac{pd}{\sqrt{nq}} + \frac{(2\sqrt{d}+\xi)^2}{\kappa-4(2\sqrt{d}+\xi)} + \|PHP\|
\end{aligned}
\end{equation}
for every $\binom{n-1}{d}<i\leq\binom{n}{d}$. Thus it is enough to obtain a better bound on the norm of $PHP$, which is the content of the following proposition. 

\begin{prop}\label{prop:bound_on_eigenvalues_of_PHP} 
	For every $\eta>0$, if $nq \geq \frac{\eta^2}{16d^6e^2} \frac{\log^6 n}{n}$, then 
\begin{equation}
\begin{aligned}
	& \mathbb{P}\left(\|PHP\| >\eta\frac{\log^{3}n}{\sqrt{n}}\right) \\
	& \qquad \leq  \frac{4e^{3}d^{5/2}}{(d-1)!}\exp\left(\frac{5}{3}\log\left(\frac{\eta}{4d^{7/2}e}\right)+5\log\log n-\left[\frac{1}{2}\left(\frac{\eta}{4d^{7/2}e}\right)^{1/3}-d\right]\log n\right).
\end{aligned}
\end{equation}
\end{prop}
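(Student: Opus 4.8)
The plan is to adapt the Füredi--Komlós machinery developed in Section \ref{sec:Bounding-the-norm-of-H} to the compressed operator $PHP$, exploiting the gain that comes from the projection $P = \frac1n(\mathbb A + dI)$. As in the proof of Theorem \ref{thm:norm_bound}, one estimates $\mathbb E[\operatorname{tr}(PHP)^k]$ for even $k$, expands the trace as a sum over closed walks, and applies Markov's inequality. The essential new point is bookkeeping the matrix entries of $P$: since $P_{\sigma\sigma'}$ is $O(1/n)$ unless $\sigma = \sigma'$ (where it is $O(1)$) or $\sigma,\sigma'$ lie in a common $d$-cell of $K$, each insertion of a $P$-factor that is genuinely ``off-diagonal at distance $\geq 1$'' contributes a factor $n^{-1}$, and this is what converts the bound $2\sqrt d + \xi$ into a bound of order $\log^3 n/\sqrt n$.

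Concretely, I would first write
\[
	(PHP)_{\sigma\sigma'} = \sum_{\alpha,\beta} P_{\sigma\alpha} H_{\alpha\beta} P_{\beta\sigma'},
\]
so that $\operatorname{tr}(PHP)^k$ expands into a sum over sequences alternating between $P$-steps and $H$-steps. Grouping consecutive $P$-steps and using $P^2 = P$, the trace becomes a sum over closed $H$-walks $w = \sigma_1 \cdots \sigma_k \sigma_1$ (as in \eqref{eq:semicircle_for_H_1}) weighted by a product of matrix elements of $P$ joining consecutive $H$-letters. The key combinatorial lemma to prove is that for each such walk the total power of $n^{-1}$ extracted from the $P$-factors is at least $k/2 - (s - d) + (\text{something})$ where $s = |\mathrm{supp}_0(w)|$; more precisely, I expect that after accounting for both the $P$-weights and the usual $q$-weights from $\mathbb E[(\chi-p)^{N_w(\tau)}]$, the contribution of the class $\mathcal W_s^k$ is suppressed by an extra factor $n^{-1}$ relative to the $\|H\|$ computation, uniformly in $s$. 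The diagonal terms $P_{\sigma\sigma} = O(1)$ must be handled with care: a walk that never uses a genuinely off-diagonal long-range $P$-step is essentially a walk for $\overline P H \overline P$-type contributions, but $P$ acting diagonally on the relevant subspace forces the walk to live on $\operatorname{im} P$, and one shows combinatorially (mimicking Lemma \ref{lem:Size_of_W_k/2_k/2+d}(2) and Lemma \ref{lem:num_of_words_in_FK_parsing}) that this still produces the $n^{-1}$ gain because the eigenvalue $-d$ of $\mathbb A$ on $\operatorname{im}\overline P$ is killed.

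Having established the improved per-class bound, the rest follows the template of the proof of Theorem \ref{thm:norm_bound} verbatim: sum over $s$ and over the FK-sentence count from Proposition \ref{prop:Main_lemma_for_norm_bound_on_H}, obtaining
\[
	\mathbb E\bigl[\operatorname{tr}(PHP)^k\bigr] \;\leq\; \frac{C d^{5/2} n^{d-1}}{(d-1)!} \,(2\sqrt d)^{k} \exp\!\left(C\sqrt{\tfrac{d}{nq}}\,k^3\right),
\]
then apply Markov's inequality with the threshold $\eta\,\log^3 n/\sqrt n$ and optimize over even $k$, choosing $k$ of order $\log n$ (rather than $(nq)^{1/4}$ as before, because here the target scale is $n^{-1/2}$ and the relevant large parameter driving the deviation is $n$, not $nq$). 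The condition $nq \geq \frac{\eta^2}{16 d^6 e^2}\frac{\log^6 n}{n}$ is exactly what is needed to make the error term $\exp(C\sqrt{d/nq}\,k^3)$ harmless for $k \asymp \log n$, and the exponent $\frac{5}{3}\log(\eta/4d^{7/2}e) + 5\log\log n - [\frac12(\eta/4d^{7/2}e)^{1/3} - d]\log n$ is precisely what drops out of the optimization.

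The main obstacle, by far, is the combinatorial claim that inserting the $P$-factors costs an additional power of $n^{-1}$ on \emph{every} equivalence class of closed walks, uniformly. The subtlety is that $P$ is not sparse in the way $H$ is --- its off-diagonal entries are small but nonzero everywhere that two $(d-1)$-cells span a common $d$-cell --- so one cannot simply say ``each $P$-step is either trivial or contributes $n^{-1}$''; one has to track how the walk's support $\mathrm{supp}_0(w)$ grows and argue that the $P$-weights, together with the constraint $|\mathrm{supp}_0(w)| \le |\mathrm{supp}_d(w)| + d$ from Claim \ref{claim:number_of_0_and_d_cells}, leave a surplus factor of $n^{-1}$ even in the extremal ``tree'' case that is tight for $\|H\|$. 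This is the ``twist of the argument'' alluded to in the introduction, and I would expect it to require a dedicated lemma isolating the net power of $n$ contributed by any admissible pattern of $P$- and $H$-steps, proved by the same kind of case analysis (on whether a new $0$-cell is discovered at each step) used in Lemmas \ref{lem:Size_of_W_k/2_k/2+d} and \ref{lem:num_of_words_in_FK_parsing}.
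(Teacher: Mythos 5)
Your general plan --- expand $\mathbb{E}\,\mathrm{tr}(PHP)^k$ over walks alternating $P$- and $H$-steps, use the $n^{-1}$ carried by $P$ to beat the usual F\"uredi--Koml\'os bound, and apply Markov at scale $\eta\log^3 n/\sqrt n$ with $k\asymp\log n$ --- matches the paper's in spirit. But two things are off, one conceptual and one quantitative, and the second one breaks the proposed argument.

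First, the premise that $P_{\sigma\sigma}=O(1)$ is wrong. Since $P=\tfrac1n(\mathbb A+dI)$ and $\mathbb A$ has zero diagonal, $P_{\sigma\sigma}=d/n$; \emph{every} entry of $P$ is $O(1/n)$. There is therefore no special class of ``genuinely off-diagonal'' $P$-steps to isolate, and the extra combinatorial case you worry about (walks making only diagonal $P$-steps, requiring a separate argument on $\mathrm{im}\,P$) does not arise. Writing $Q=nP=\mathbb A+dI$ so that $|Q_{\sigma\sigma'}|\le d$, each of the $k$ $P$-factors contributes exactly a factor $d/n$ after this normalization, and the paper tracks this with an explicit $n^{-k}$ in the prefactor.

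Second, and more seriously, your claimed intermediate bound $\mathbb{E}[\mathrm{tr}(PHP)^k]\lesssim n^{d-1}(2\sqrt d)^k\exp(C\sqrt{d/nq}\,k^3)$ only gains a single power of $n$ over the $\|H\|$ estimate; inserting it into Markov at threshold $\eta\log^3 n/\sqrt n$ produces a factor $\bigl(2\sqrt d\,\sqrt n/(\eta\log^3 n)\bigr)^k$, which blows up exponentially in $k$ rather than decaying. What actually saves the day is that the $n^{-k}$ from $P$, combined with the $n^s$ from labelings and the constraint $|\mathrm{supp}_0(w)|\le k+d$ for the alternating walks (Lemma~\ref{lem:Lemma_trivial_eigenvalues_estimation_1}), leaves a \emph{net} factor of order $n\cdot(\mathrm{poly}(k)/\sqrt n)^k$; the gain is $n^{-k/2}$, not $n^{-1}$. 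To get this you cannot simply reuse Proposition~\ref{prop:Main_lemma_for_norm_bound_on_H}: the relevant walks are a new object --- ``$2$-words'' in which only the odd ($B$-) steps must span a $d$-cell, with the cancellation constraint $N_w^{\mathrm{odd}}(\tau)\neq1$ imposed only on those steps --- and this larger and differently constrained class requires its own F\"uredi--Koml\'os count (Lemma~\ref{lem:estimation_for_trivial_ev_FK_bound}), in which the lower bound on the number of parsings is $s-d$ rather than $2(s-d)$. The extra looseness in the counting is exactly compensated by the extra $n^{-1}$ per $P$-step, and the balance of these two effects produces the stated threshold and error probability.
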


Before proving Proposition \ref{prop:bound_on_eigenvalues_of_PHP}, we use it to conclude the proof of Theorem \ref{thm:location_of_eigenvalues_strong_ver}.

\begin{proof}[Proof of Theorem \ref{thm:location_of_eigenvalues_strong_ver} part
(2)]
 	It follows from Proposition \ref{prop:bound_on_eigenvalues_of_PHP} (with $\eta=4d^{7/2}e\,(2d+2\xi)^{3}$) that with probability at least $1-\mathscr{E}(\xi)$  we have $\|PHP\| \leq4d^{7/2}e\,(2d+2\xi)^{3}n^{-1/2}\log^{3}n$ assuming that $nq\geq d(2d+2\xi)^6 n^{-1} \log^6n$. When combined with \eqref{eq:error_on_eigenvalue_dist_from_kappa} (with the value $\xi=(\sqrt{6}-2)\sqrt{d}$ in Theorem \ref{thm:location_of_eigenvalues_strong_ver} part (1)) this yields that with probability at least $1-\mathcal{E}((\sqrt{6}-2)\sqrt{d})-\mathscr{E}(\xi)$ we have $|\lambda_{i}(A)- np|\leq \Gamma(\xi,n)$ for every $\binom{n-1}{d}<i\leq\binom{n}{d}$, assuming that $nq\geq d(2d+2\xi)^6 n^{-1}\log^6n$.
\end{proof}


\subsection{Proof of Proposition \ref{prop:bound_on_eigenvalues_of_PHP}}

The proof of Proposition \ref{prop:bound_on_eigenvalues_of_PHP} is based on a variant of the FK method developed in Section \ref{sec:Bounding-the-norm-of-H}. There are several differences between the proof for the matrix $H$ and for the matrix $PHP$.  First, since the matrix $P$ is deterministic, the requirement that each $d$-cell must be crossed twice is only valid for crossings associated with entries of $H$, resulting in a weaker constraint on the set of admissible words. Second, in the F\"uredi-Koml\'os-type bound on the number of equivalence classes of words (see Lemma \ref{lem:estimation_for_trivial_ev_FK_bound}) we only obtain a rough bound, by showing that the words in the FK parsing contain at least $s-d$ crossings, instead of the $2(s-d)$ crossings proved in Lemma \ref{lem:num_of_words_in_FK_parsing}; we note that this bound may be improved using a more refined analysis, but it is sufficient for our purposes and has the advantage of having a relatively concise proof. Finally, the diagonal entries of the matrix $P$ are non-zero, which leads to a slightly larger class of words. The losses resulting from the weaker restriction on the set of admissible words and the weaker lower bound on the number of crossings in words of the FK parsing are compensated by the additional factor $n^{-1}$ associated with each entry of $P$, which ultimately allows us to improve the final estimate by the required factor $(nq)^{-1/2}$.

The proof starts with the following observation. Let $Q:=nP=\mathbb{A}+dI$ and $B:=A-\mathbb{E}[A]$. Then, analogously to \eqref{eq:semicircle_for_H_1}, we have 
\begin{equation}
\mathbb{E}\left[\int_{\mathbb{R}}x^{k}L_{PHP}(\mathrm{d}x)\right] 
 \leq\frac{1}{n^{d-1}(np)^{k/2}n^{k}} \sum_{\sigma_{1},\ldots,\sigma_{2k}\in X_{+}^{d-1}}|\mathbb{E}[B_{\sigma_{1}\sigma_{2}}Q_{\sigma_{2}\sigma_{3}}\ldots B_{\sigma_{2k-1}\sigma_{2k}}Q_{\sigma_{2k}\sigma_{1}}]|\label{eq:trivial_eigenvalues_estimation_1}
\end{equation}

We wish to rewrite the last sum using closed word, similarly to \eqref{eq:semicircle_H_6}. In order to do that a new definition for words is needed. 

\begin{defn}[$2$-words]
	An $(n,d)$-word $w$ of type $2$ (or shortly a \emph{$2$-word}) is a finite sequence $\sigma_{1}\ldots\sigma_{2k}\sigma_{2k+1}$ of letters at least three letters long such that:
	\begin{itemize}
		\item $\sigma_{2i-1}\cup\sigma_{2i}$ is a $d$-cell for every $1\leq i\leq k$. 
		\item For every $1\leq i\leq k$ either $\sigma_{2i}\cup\sigma_{2i+1}$
			is a $d$-cell, or $\sigma_{2i}=\sigma_{2i+1}$. 
	\end{itemize}
The \emph{length} of $w=\sigma_{1}\ldots\sigma_{2k+1}$ is defined to be $2k+1$. A $2$-word is called \emph{closed} if its first and last letters are the same. Two $2$-words $w=\sigma_{1}\ldots\sigma_{2k+1}$ and $w'=\sigma'_{1}\ldots\sigma'_{2k+1}$
are called \emph{equivalent} if there exists a permutation $\pi$ on $X^{0}=V$ such that $\pi(\sigma_{i})=\sigma'_{i}$ for every $1\leq i\leq 2k+1$ (see Definition \ref{def:letter_words_and_equivalence} for the meaning of $\pi(\sigma_i)$). 
\end{defn}

\begin{defn}[Support of $2$-words]
	For a $2$-word $w=\sigma_{1}\ldots\sigma_{2k+1}$ we define its \emph{support} $\mathrm{supp}_{0}(w)=\sigma_{1}\cup\sigma_{2}\cup\ldots\cup\sigma_{2k+1}$, 
its \emph{$d$-cell support}
\begin{equation*}
\mathrm{supp}_{d}(w)=\{ \sigma_{i}\cup\sigma_{i+1}\,:\,1\leq i\leq2k \text{ such that } |\sigma_i \cup \sigma_{i+1}| = d+1\},
\end{equation*}
and its \emph{odd-crossing $d$-cell support} $\mathrm{supp}_{d}^{\mathrm{odd}}(w)=\{ \sigma_{2j-1}\cup\sigma_{2j}\,:\,1\leq j\leq k\}$.
\end{defn}

\begin{defn}[The graph of a $2$-word] Given a $2$-word $w=\sigma_{1}\ldots\sigma_{2k+1}$ we define $G_{w}=(V_{w},E_{w})$ to be the graph with vertex set $V_{w}=\{ \sigma_{i}\,:\,1\leq i\leq2k+1\}$ and edge set $E_{w}=\{ \{ \sigma_{i},\sigma_{i+1}\} \,:\,1\leq i\leq2k\}$; note that we may have $\sigma_i = \sigma_{i+1}$, in which case $\{\sigma_i, \sigma_{i+1}\}$ gives rise to a loop in $E_w$. For an edge $e\in E_{w}$ we define $N_{w}(e)=|\{ 1\leq i\leq2k+1\,:\,\{ \sigma_{i},\sigma_{i+1}\} =e\}|$ to be the number of times the edge $e$ is crossed along the path generated by $w$ in the graph $G_{w}$, and let $N_{w}^\mathrm{odd}(e)=|\{ 1\leq j\leq k\,:\,\{ \sigma_{2j-1},\sigma_{2j}\} =e\}|$ be the number of crossings made in odd steps. As in the case of words we define the $i$-th crossing time of an edge and of a $d$-cell. In addition, the edges of the graph $G_{w}$ that are not loops can be divided into different classes according to the $d$-cell generated by the two $(d-1)$-cells which form its vertices. For a $d$-cell $\tau$ we define $E_{w}(\tau)=\{ \{ \sigma,\sigma'\} \in E_{w}\,:\,\sigma\cup\sigma'=\tau\} $ and $N_{w}(\tau)=\sum_{e\in E_{w}(\tau)}N_{w}(e)$, and let  $N_{w}^\mathrm{odd}(\tau)=\sum_{e\in E_{w}(\tau)}N_{w}^\mathrm{odd}(e)$. 
\end{defn}

Using the above definitions and going back to \eqref{eq:trivial_eigenvalues_estimation_1} we see that each term in the sum can be associated with a list of letters $w=\sigma_{1}\sigma_{2}\ldots\sigma_{2k}\sigma_{2k+1}$ with $\sigma_{2k+1}=\sigma_{1}$. Since $B_{\sigma,\sigma'}=0$ whenever $\sigma\cup\sigma'\notin X^{d}$ and $Q_{\sigma,\sigma'}=0$ unless $\sigma\cup\sigma'\in X^{d}$ or $\sigma=\sigma'$ it follows that we can restrict the sum in (\ref{eq:trivial_eigenvalues_estimation_1}) to those $w=\sigma_{1}\ldots\sigma_{2k+1}$ which are closed $2$-words of length $2k+1$. Using the independence structure of $A$ for different
$d$-cells and the definitions of $N_{w}$ and $N_{w,\mathrm{odd}}$ we then have
\begin{align*}
\mathbb{E}\left[\int_{\mathbb{R}}x^{k}L_{PHP}(\mathrm{d}x)\right]
&\leq \frac{1}{n^{d-1}(nq)^{\frac{k}{2}}n^{k}}\sum_{\substack{w\,\,\footnotesize{\mbox{a closed \ensuremath{2}-word}}\\
\footnotesize{\mbox{of length }}k+1
}
}\left|\prod_{\tau\in X^{d}}\mathbb{E}\left[(\chi-p)^{N_{w}^\mathrm{odd}(\tau)}\right]\prod_{\sigma\in X^{d-1}}d^{N_{w}(\{ \sigma,\sigma\} )}\right|\\
&\leq \frac{d^{2k+1}}{n^{d-1}(nq)^{\frac{k}{2}}n^{k}}\sum_{\substack{w\,\,\footnotesize{\mbox{a closed \ensuremath{2}-word}}\\
\footnotesize{\mbox{of length }}k+1
}
}\left|\prod_{\tau\in X^{d}}\mathbb{E}\left[(\chi-p)^{N_{w}^\mathrm{odd}(\tau)}\right]\right|.
\end{align*}
Since $\mathbb{E}[\chi-p]=0$, it follows that we can restrict the last sum to those $2$-words for which $N_{w}^{\mathrm{odd}}(\tau)\neq1$ for every $\tau\in X^{d}$. 

Next, we rewrite the sum over such words using their equivalence classes. Denoting by $[w]$ the equivalence class of a $2$-word $w$, we have 
\begin{equation}
	\mathbb{E}\left[\int_{\mathbb{R}}x^{k}L_{PHP}(\mathrm{d}x)\right]\leq\frac{d^{2k+1}}{n^{d-1}(nq)^{\frac{k}{2}}n^{k}}\sum_{w}|[w]|\left|\prod_{\tau\in X^{d}}\mathbb{E}\left[(\chi-p)^{N_{w}^\mathrm{odd}(\tau)}\right]\right|,\label{eq:trivial_eigenvalues_estimation_2}
\end{equation}
where the sum is over representatives of the equivalence classes of
closed $2$-words of length $2k+1$ such that $N_{w}^\mathrm{odd}(\tau)\neq1$
for every $\tau\in X^{d}$. 

We further distinguish between different equivalence classes according to the number of $0$-cells in $\mathrm{supp}_{0}(w)$. Denoting by $\widehat{\mathcal{W}}_{s}^{k}=\widehat{\mathcal{W}}_{s}^{k}(n,d)$ a set of representatives for the equivalence classes of closed $2$-words of length $2k+1$ such that $N_{w}^\mathrm{odd}(\tau)\neq1$ for all $\tau\in X^{d}$ and $|\mathrm{supp}_{0}(w)|=s$, and observing that by the same argument used to prove Claim \ref{claim:B_n_s_d} the number of elements in the equivalence class of $w\in\widehat{\mathcal{W}}_{s}^{k}$ is $B_{n,s,d}\equiv\frac{n(n-1)\cdots(n-s)}{d!}$, we can rewrite (\ref{eq:trivial_eigenvalues_estimation_2}) as 
\begin{align}
\mathbb{E}\left[\int_{\mathbb{R}}x^{k}L_{PHP}(\mathrm{d}x)\right] & \leq\sum_{s\geq d}\sum_{w\in\widehat{\mathcal{W}}_{s}^{k}}\frac{d^{2k+1}n^{s}}{n^{d-1}(nq)^{\frac{k}{2}}n^{k}}\left|\prod_{\tau\in X^{d}}\mathbb{E}\left[(\chi-p)^{N_{w}^\mathrm{odd}(\tau)}\right]\right|\nonumber \\
 & \leq\sum_{s\geq d}\frac{d^{2k+1}n^{s}q^{|\mathrm{supp}_{d}^{\mathrm{odd}}(w)|}}{n^{d-1}(nq)^{\frac{k}{2}}n^{k}}\,|\widehat{\mathcal{W}}_{s}^{k}|,\label{eq:trivial_eigenvalues_estimation_3}
\end{align}
where for the second inequality we used the same argument that yields
(\ref{eq:semicircle_H_8}). 

The next lemma gives constraints on the value of $|\mathrm{supp}_{d}^{\mathrm{odd}}(w)|$ for a given $s$ as well as a bound on the values of $s$ itself. 

\begin{lem}\label{lem:Lemma_trivial_eigenvalues_estimation_1} 
We have $\frac{|\mathrm{supp}_{0}(w)|-d}{2}\leq|\mathrm{supp}_{d}^{\mathrm{odd}}(w)|\leq\lfloor \frac{k}{2}\rfloor $, and therefore $|\mathrm{supp}_{0}(w)|\leq k+d$. 
\end{lem}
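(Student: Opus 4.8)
The plan is to track how $0$-cells are discovered along the path of the $2$-word $w = \sigma_1 \ldots \sigma_{2k+1}$, separating the role of odd-indexed crossings $\sigma_{2j-1}\sigma_{2j}$ (which correspond to entries of $B$, hence to genuine $d$-cells) from even-indexed crossings $\sigma_{2i}\sigma_{2i+1}$ (which correspond to entries of $Q = \mathbb{A}+dI$, and may either be a $d$-cell crossing or a ``stay'' with $\sigma_{2i}=\sigma_{2i+1}$). The upper bound $|\mathrm{supp}_d^{\mathrm{odd}}(w)| \leq \lfloor k/2 \rfloor$ is essentially immediate: there are exactly $k$ odd steps (namely $\sigma_{2j-1}\sigma_{2j}$ for $1 \leq j \leq k$), and since we have restricted attention to $2$-words with $N_w^{\mathrm{odd}}(\tau) \neq 1$ for every $d$-cell $\tau$, each $d$-cell in $\mathrm{supp}_d^{\mathrm{odd}}(w)$ is the union $\sigma_{2j-1}\cup\sigma_{2j}$ for at least two distinct values of $j$; hence the number of distinct such $d$-cells is at most $k/2$, i.e.\ at most $\lfloor k/2 \rfloor$ since it is an integer.

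The main content is the lower bound $|\mathrm{supp}_d^{\mathrm{odd}}(w)| \geq (|\mathrm{supp}_0(w)| - d)/2$. First I would observe, as in the proof of Claim \ref{claim:number_of_0_and_d_cells}, that the initial letter $\sigma_1$ accounts for $d$ of the $0$-cells in $\mathrm{supp}_0(w)$, and each subsequent new $0$-cell must be discovered at some crossing $\sigma_i\sigma_{i+1}$ in which a $d$-cell is crossed that contains that new vertex. So there are exactly $s - d$ ``discovery crossings'' (where $s = |\mathrm{supp}_0(w)|$), each producing one new $0$-cell and each crossing a $d$-cell. The key point is that a discovery crossing cannot be an even step of the form $\sigma_{2i}=\sigma_{2i+1}$ (no cell is crossed), so every discovery crossing is either an odd step $\sigma_{2j-1}\sigma_{2j}$, or an even step $\sigma_{2i}\sigma_{2i+1}$ with $\sigma_{2i}\cup\sigma_{2i+1}$ a genuine $d$-cell. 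In the first case the $d$-cell discovered lies in $\mathrm{supp}_d^{\mathrm{odd}}(w)$ directly. In the second case I would argue that the $d$-cell $\tau = \sigma_{2i}\cup\sigma_{2i+1}$ crossed at an even discovery step must nonetheless also be crossed at some odd step: indeed, since $\tau$ contributes a new $0$-cell, $\tau \in \mathrm{supp}_d(w)$ is discovered here, and $\sigma_{2i}$ was already present, so $\sigma_{2i}$ sits in $\partial\tau$; but then the next odd step $\sigma_{2i+1}\sigma_{2i+2}$ (or tracing forward/backward to the nearest odd step incident to a $(d-1)$-cell of $\partial\tau$) will cross $\tau$ as well — more carefully, every $(d-1)$-cell appearing in $w$ appears as some $\sigma_{2j-1}$ or $\sigma_{2j}$, and the crossing $\sigma_{2i}\sigma_{2i+1}$ being even forces $\sigma_{2i+1}=\sigma_{2(i+1)-1}$, so the pair $\sigma_{2i+1}\cup\sigma_{2i+2}$ at the following odd step either equals $\tau$ (done) or reveals a further $(d-1)$-cell of... — here I would instead use the cleaner count below.

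The cleanest route: let $m$ be the number of discovery crossings that are odd steps and $m'$ the number that are even steps, so $m + m' = s - d$. Each of the $m$ odd discovery steps contributes a distinct $d$-cell to $\mathrm{supp}_d^{\mathrm{odd}}(w)$ (distinct because they discover distinct $0$-cells, so their $d$-cells are pairwise distinct). For the even discovery steps, I would show each contributes $+1$ to $|\mathrm{supp}_d^{\mathrm{odd}}(w)|$ as well, but at worst shared in pairs — more robustly, I claim $|\mathrm{supp}_d^{\mathrm{odd}}(w)| \geq m$ and also $|\mathrm{supp}_d^{\mathrm{odd}}(w)| \geq m'$ is false in general, so instead I bound: the total number of odd steps is $k$, and these $k$ odd-step crossings use $d$-cells each of which (being in $\mathrm{supp}_d^{\mathrm{odd}}$) is hit $\geq 2$ times, giving $k \geq 2|\mathrm{supp}_d^{\mathrm{odd}}(w)|$ again; for the lower bound, note that $\mathrm{supp}_0(w) = \bigcup \{\tau : \tau \in \mathrm{supp}_d^{\mathrm{odd}}(w)\} \cup \sigma_1$ fails if even-step $d$-cells contribute new vertices, so I must show that is not the case: an even-step crossing $\sigma_{2i}\sigma_{2i+1}$ with both being genuine $(d-1)$-cells forces $\sigma_{2i+1} = \sigma_{(2i+2)-1}$ to be the left endpoint of the next odd step, and iterating, any $0$-cell first seen at an even step reappears... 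The honest statement is that each new $0$-cell discovered at an even step $i$ makes $\sigma_{2i+1}$ a freshly-seen $(d-1)$-cell, and the very next odd step $\sigma_{2i+1}\sigma_{2i+2}$ then has $\sigma_{2i+1}\cup\sigma_{2i+2} \in \mathrm{supp}_d^{\mathrm{odd}}(w)$ containing that new $0$-cell; distinct even-discovery steps give distinct new $0$-cells hence force distinct such odd $d$-cells, and these are also distinct from the $d$-cells of the odd-discovery steps — so $|\mathrm{supp}_d^{\mathrm{odd}}(w)| \geq m + m'' $ where... To avoid over-counting pitfalls I would finally package it as: $|\mathrm{supp}_d^{\mathrm{odd}}(w)| \geq \lceil (s-d)/? \rceil$; the factor $2$ in the statement already builds in slack, so it suffices to exhibit, injectively, for every two new $0$-cells one odd $d$-cell in $\mathrm{supp}_d^{\mathrm{odd}}(w)$, which the above pairing provides. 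Finally, combining $|\mathrm{supp}_0(w)| - d = s - d \leq 2|\mathrm{supp}_d^{\mathrm{odd}}(w)| \leq 2\lfloor k/2\rfloor \leq k$ gives $|\mathrm{supp}_0(w)| \leq k + d$.

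\textbf{Main obstacle.} The delicate point is the lower bound: ruling out that $0$-cells get discovered ``for free'' at even (i.e.\ $Q$-type) crossings without a corresponding odd ($B$-type) $d$-cell in $\mathrm{supp}_d^{\mathrm{odd}}(w)$, and making the correspondence genuinely injective into $\mathrm{supp}_d^{\mathrm{odd}}(w)$ rather than just counting with multiplicity. The factor of $2$ in the claimed inequality is precisely the cushion that absorbs the worst-case sharing between odd-discovered and even-discovered vertices, and the argument should be arranged to exploit exactly that slack.
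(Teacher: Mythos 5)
Your proposal is correct and lands on essentially the same argument as the paper: assign each newly discovered $0$-cell to the odd-time $d$-cell $\sigma_{2m-1}\cup\sigma_{2m}$ adjacent to the letter $\sigma_j$ in which it first appears (one of $j-1,j$ is always odd), observe this $d$-cell belongs to $\mathrm{supp}_d^{\mathrm{odd}}(w)$ and contains the new $0$-cell, and note that each such $d$-cell can be charged with at most two new $0$-cells (one via $\sigma_{2m-1}$, one via $\sigma_{2m}$) — exactly the paper's ``each odd crossing can reveal at most two new $0$-cells.''

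One localized issue worth flagging: your intermediate claim that the odd-discovery $d$-cells and the even-discovery $d$-cells ``are also distinct'' is not correct in general. If an even discovery at crossing time $2i$ (new $0$-cell appears in $\sigma_{2i+1}$) is immediately followed by an odd discovery at crossing time $2i+1$ (new $0$-cell appears in $\sigma_{2i+2}$), both are assigned to the same odd $d$-cell $\sigma_{2i+1}\cup\sigma_{2i+2}$. You correctly sensed this and fell back to the at-most-$2$-to-$1$ count rather than true disjointness; it is that fallback, not the disjointness claim, that gives the inequality $|\mathrm{supp}_0(w)|-d\leq 2|\mathrm{supp}_d^{\mathrm{odd}}(w)|$, and it should be stated as such rather than as a salvage of a disjointness argument.
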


\begin{proof}
	Recalling that $N_{\mathrm{odd}}(\tau)\neq1$ for all $\tau\in X^{d}$ and using the fact that $\sum_{\tau\in X^{d}}N_{w,\mathrm{odd}}(\tau)=k$ it follows that $|\mathrm{supp}_{d}^{\mathrm{odd}}(w)|\leq\lfloor \frac{k}{2}\rfloor$. As for the lower bound, note that if $\sigma_{j}$ is the first appearance of a new $0$-cell, then both $\sigma_{i-1}\sigma_{i}$ and $\sigma_{i}\sigma_{i+1}$ are crossings in which this $0$-cell appeared. Consequently, the $d$-cells crossed in odd times contain all $0$-cells in $\mathrm{supp}_{0}(w)$, and each odd crossing can reveal at most two new $0$-cells.
\end{proof}

Combining (\ref{eq:trivial_eigenvalues_estimation_3}) and Lemma \ref{lem:Lemma_trivial_eigenvalues_estimation_1} yields  
\begin{equation}
	\mathbb{E}\left[\int_{\mathbb{R}}x^{k}L_{PHP}(\mathrm{d}x)\right]\leq\frac{d^{2k+1}}{(nq)^{\frac{k}{2}}n^{k-1}}\sum_{s=d}^{k+d}(n\sqrt{q})^{s-d} |\widehat{\mathcal{W}}_{s}^{k}|.
\label{eq:trivial_eigenvalues_estimation_4}
\end{equation}
What remains is an F\"uredi-Koml\'os-type bound on $|\widehat{\mathcal{W}}_{s}^{k}|$, which is the content of the following lemma.
\begin{lem}
\label{lem:estimation_for_trivial_ev_FK_bound} For every $d\leq s\leq k+d$
\[
|\widehat{\mathcal{W}}_{s}^{k}|\leq 2d\,(4d)^{k}\,(2k+1)(\sqrt{d}(2k+1)^{3})^{2k+1-(s-d)}
\]
\end{lem}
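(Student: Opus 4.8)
The plan is to transport the F\"uredi--Koml\'os machinery of Section~\ref{sec:Bounding-the-norm-of-H} to closed $2$-words, following the same three-step scheme — parse each $2$-word into an FK sentence, bound the number of words of the parsing, and bound the number of ways to choose the equivalence classes of the FK words and to glue them together — but with a deliberately cruder bound on the number of words. Given a closed $2$-word $w=\sigma_1\ldots\sigma_{2k+1}$, define its FK parsing $a_w$ exactly as in Section~\ref{sec:Bounding-the-norm-of-H} (parse at every old edge, at every third and subsequent crossing of an edge, and at the second crossing of an edge that lies in a $d$-cell already containing a twice-crossed edge), and add one rule to deal with the diagonal entries of $P$: also parse at every loop step, i.e.\ whenever $\sigma_i=\sigma_{i+1}$.

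Because we now cut at every loop, no two consecutive letters inside a word of $a_w$ coincide, so each word of $a_w$ is a genuine word; and, just as in Section~\ref{sec:Bounding-the-norm-of-H}, the graph $G_{a_w}$ is a tree (the spanning tree of $V_w$ built from the first crossings of new edges), every edge of it is crossed at most twice, every $d$-cell carries at most one twice-crossed edge, and the first letter of each word after the first one occurs in an earlier word. Hence $a_w$ is an FK sentence in the sense of Definition~\ref{def:FK-words and FK-sentences} and each of its words is an FK word, so Lemma~\ref{lem:FK_lemma_1} and Lemma~\ref{lem:How_to_concatenate_a_new_FK_sequence} apply to it without change. Since $w$ is recovered from $a_w$ by concatenating its words, and the parsing commutes with permutations of $V$, it suffices to bound the number of equivalence classes of FK sentences of length $2k+1$ arising as such parsings.

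Next comes the analog of Lemma~\ref{lem:num_of_words_in_FK_parsing}: if $w\in\widehat{\mathcal{W}}_s^k$ then the number $m_w$ of words of $a_w$ satisfies $m_w\le 2k+1-(s-d)$. Writing $r_w$ for the number of crossings of $a_w$ we have $2k+1=m_w+r_w$, so it is enough to exhibit $s-d$ distinct crossings that survive in $a_w$. Relabel the $0$-cells so that the $d$ vertices of $\sigma_1$ are $\llbracket d\rrbracket$ and the remaining $s-d$ vertices of $\mathrm{supp}_0(w)$ appear along $w$ in increasing order; for $1\le j\le s-d$ let $\sigma_{i(j)}\sigma_{i(j)+1}$ be the step at which the vertex $d+j$ first appears. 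Then $\sigma_{i(j)+1}$ is a vertex of $G_w$ occurring for the first time, so this step is the first crossing of a new, non-loop edge and is therefore not a parsing point; and the indices $i(j)$ are pairwise distinct since a non-loop step introduces at most one new $0$-cell. Hence $r_w\ge s-d$, which gives the claim.

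It remains to count. Fix $m$ with $1\le m\le 2k+1-(s-d)$. There are $\binom{2k}{m-1}\le(2k+1)^{m-1}$ compositions $(l_1,\dots,l_m)$ of $2k+1$ into $m$ positive parts; given these, Lemma~\ref{lem:FK_lemma_1} yields at most $\prod_{i=1}^{m}\tfrac{\sqrt d}{2}(2\sqrt d)^{l_i}=(\tfrac{\sqrt d}{2})^{m}(2\sqrt d)^{2k+1}=d(4d)^k(\tfrac{\sqrt d}{2})^{m-1}$ choices of equivalence classes for the $m$ FK words, and iterating Lemma~\ref{lem:How_to_concatenate_a_new_FK_sequence} yields at most $\prod_{i=1}^{m-1}(l_1+\dots+l_i)^2\le(2k+1)^{2(m-1)}$ ways to glue them into an FK sentence. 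Multiplying and using $\tfrac{\sqrt d}{2}\le\sqrt d$, the number of equivalence classes of FK sentences with exactly $m$ words is at most $d(4d)^k\big(\sqrt d(2k+1)^3\big)^{m-1}$; summing over $m$ and bounding the $2k+1-(s-d)\le 2k+1$ summands by the largest one (attained at $m=2k+1-(s-d)$, as $\sqrt d(2k+1)^3\ge1$) gives $|\widehat{\mathcal{W}}_s^k|\le 2d(4d)^k(2k+1)\big(\sqrt d(2k+1)^3\big)^{2k+1-(s-d)}$, as required. The only genuinely delicate point in the whole argument is the loop bookkeeping — checking that forcing a cut at every diagonal step still produces an FK sentence in the precise sense of Definition~\ref{def:FK-words and FK-sentences}, so that the two FK lemmas of Section~\ref{sec:Bounding-the-norm-of-H} can be invoked verbatim — while the weaker lower bound $s-d$ (instead of $2(s-d)$) on the surviving crossings is exactly the simplification that makes the short proof above possible.
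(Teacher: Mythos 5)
Your proof is correct and follows essentially the same route as the paper: define the FK parsing of a $2$-word with the extra rule of cutting at every loop, check that this yields an FK sentence so that Lemmas \ref{lem:FK_lemma_1} and \ref{lem:How_to_concatenate_a_new_FK_sequence} apply, bound the number of words of the parsing by $2k+1-(s-d)$ by exhibiting $s-d$ surviving crossings (the first appearances of new $0$-cells), and then sum the per-$m$ count exactly as in the proof of Proposition \ref{prop:Main_lemma_for_norm_bound_on_H}. You spell out the $m_w\le 2k+1-(s-d)$ bound in more detail than the paper's one-liner, but the argument and the final estimate are the same.
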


Before proving Lemma \ref{lem:estimation_for_trivial_ev_FK_bound} we use it to conclude the proof of Proposition \ref{prop:bound_on_eigenvalues_of_PHP}. Combining (\ref{eq:trivial_eigenvalues_estimation_4}) and Lemma \ref{lem:estimation_for_trivial_ev_FK_bound} we get for even $k$ 
\begin{align*}
	\mathbb{E}\left[\int_{\mathbb{R}}x^{k}L_{PHP}(\mathrm{d}x)\right] & \leq 2d^{2}(2k+1)\frac{(4d^{3})^{k}(\sqrt{d}(2k+1)^{3})^{2k+1}}{(nq)^{\frac{k}{2}}n^{k-1}}\sum_{s=d}^{k+d}\left(\frac{n\sqrt{q}}{\sqrt{d}(2k+1)^{3}}\right)^{s-d}\\
 & \leq 2d^{2}(2k+1)\frac{(4d^{3})^{k}(\sqrt{d}(2k+1)^{3})^{2k+1}}{(nq)^{\frac{k}{2}}n^{k-1}}(k+1)\left[\left(\frac{n\sqrt{q}}{\sqrt{d}(2k+1)^{3}}\right)^{k}+1\right]\\
 & \leq2d^{5/2}(2k+1)^{5}n\left[\left(\frac{4d^{7/2}(2k+1)^{3}}{\sqrt{n}}\right)^{k}+\left(\frac{4d^{4}(2k+1)^{6}}{\sqrt{nq} n}\right)^{k}\right].
\end{align*}

It follows from Markov's inequality that for every even $k$ 
\begin{align*}
 & \mathbb{P}\left(\|PHP\| >\eta\frac{\log^{3}n}{\sqrt{n}}\right)\\
& \qquad \leq  2d^{5/2}(2k+1)^{5}\frac{n^{d}}{(d-1)!}\left[\left(\frac{4d^{7/2}(2k+1)^{3}}{\eta\log^{3}n}\right)^{k}+\left(\frac{4d^{4}(2k+1)^{6}}{\sqrt{q} \, n \, \eta \log^{3}n}\right)^{k}\right].
\end{align*}
By taking $k=k(n)$ to be the largest even integer which is smaller than $\frac{1}{2}(\eta/(4d^{7/2}e))^{1/3}\log n-1$ and assuming that $nq \geq\frac{\eta^2}{16d^{6}e^2} \frac{\log^6 n}{n}$ we obtain that 
\begin{align*}
& \mathbb{P}\left(\|PHP\| >\eta\frac{\log^{3}n}{\sqrt{n}}\right) \\
& \qquad \leq\frac{4e^{3}d^{5/2}}{(d-1)!}\exp\left(\frac{5}{3}\log\left(\frac{\eta}{4d^{7/2}e}\right)+5\log\log n-\left[\frac{1}{2}\left(\frac{\eta}{4d^{7/2}e}\right)^{1/3}-d\right]\log n\right),
\end{align*}
thus completing the proof. \hfill{}$\boxempty$

\begin{proof}[Proof of Lemma \ref{lem:estimation_for_trivial_ev_FK_bound}]
We closely follow the proof of Proposition \ref{prop:Main_lemma_for_norm_bound_on_H}.
Given $w\in\widehat{\mathcal{W}}_{s}^{k}$ we define its FK parsing $a_{w}$ by parsing $w$ in the crossing $\sigma_{i}\sigma_{i+1}$ if one of the following occurs:
\begin{itemize}
	\item $\{ \sigma_{i},\sigma_{i+1}\} $ is an old edge of $G_{w}$. 
	\item $\{ \sigma_{i},\sigma_{i+1}\} $ is a third or subsequent crossing of the edge $\{ \sigma_{i},\sigma_{i+1}\} $. 
	\item $\{ \sigma_{i},\sigma_{i+1}\} $ is a second crossing of the edge $\{ \sigma_{i},\sigma_{i+1}\} $ and there exists an edge $\{ \sigma,\sigma'\} \in E_{w}(\sigma_i\cup\sigma_{i+1})$ in the $d$-cell
as $\sigma_{i}\cup\sigma_{i+1}$ which is crossed twice in $\sigma_{1}\sigma_{2}\ldots\sigma_{i}$.
	\item $\{ \sigma_{i},\sigma_{i+1}\} $ is a loop. 
\end{itemize}

The resulting sentence is an FK sentence in the sense of Definition \ref{def:FK-words and FK-sentences}. Since the path of each word in $\widehat{\mathcal{W}}_{s}^{k}$ must contain $s-d$ crossing times in which a new $0$-cell is observed, it follows that the number of words $m_{w}$ in the FK sentence $a_{w}$ satisfies 
\[
	1\leq m_{w}\leq2k+1-(s-d).
\]

By repeating the same argument as in the proof of Proposition \ref{prop:Main_lemma_for_norm_bound_on_H} and using Lemma \ref{lem:FK_lemma_1} and Lemma \ref{lem:How_to_concatenate_a_new_FK_sequence} we obtain 
\begin{align*}
	|\widehat{\mathcal{W}}_{s}^{k}| & \leq\sum_{m=1}^{2k+1-(s-d)}\binom{2k}{m-1}\left(\frac{\sqrt{d}}{2}\right)^{m}(2\sqrt{d})^{2k+1}(2k+1)^{2(m-1)}\\
 & \leq2d(4d)^{k}(2k+1)(\sqrt{d}(2k+1)^{3})^{2k+1-(s-d)},
\end{align*}
thus completing the proof. 
\end{proof}

{\small
\bibliographystyle{alpha}
\bibliography{citations}
}

\medskip{}
\medskip{}
$ $\\
Antti Knowles \\
E-mail: knowles@math.ethz.ch\\
$ $\\
Ron Rosenthal\\
E-mail: ron.rosenthal@math.ethz.ch\\
\medskip{}

\noindent
Department Mathematik\\
ETH Zürich \\
CH-8092 Zürich \\
Switzerland\\

\end{document}